\newtheorem{thm}{Theorem}[section]
 \newtheorem{cor}[thm]{Corollary}
 \newtheorem{lem}[thm]{Lemma}
 \newtheorem{prop}[thm]{Proposition}
\newtheorem{ex}[thm]{Example}
 \newtheorem{defn}[thm]{Definition}
\newdefinition{remark}{Remark}
\newtheorem{discu}{Discussion:}
\numberwithin{equation}{section}
\newtheorem{conje}{Conjecture:}
\newcommand{\mc}[1]{\mathcal {#1}}
 \newcommand{\dg}{{\dagger}}
 \newcommand{\n}{{*_N}}
\journal{Linear and Multilinear Algebra}
\begin{document}

\begin{frontmatter}

\title{\sc{Further results on generalized inverses of tensors via  the Einstein
product}}

\vspace{-.4cm}

\author{Ratikanta Behera$^*$ and Debasisha Mishra$^\dag$}

\address{$^{*}$ Jean Kuntzmann Laboratory,\\
                Joseph Fourier University,\\
                51 rue des mathematiques, \\
                38041 Grenoble cedex 09, France.\\
                email: ratikanta.behera\symbol{'100}imag.fr.\\

                \vspace{.3cm}

                       $^{\dag}$ Department of Mathematics,\\
                        National Institute of Technology Raipur, India.
                        \\email: dmishra\symbol{'100}nitrr.ac.in. }

\begin{abstract}
The notion of the Moore-Penrose inverse of tensors with the Einstein
product was introduced, very recently.  In this paper, we further
elaborate this theory by producing a few characterizations of
different generalized inverses of tensors. A new method to compute
the Moore-Penrose inverse of tensors is  proposed. Reverse order
laws for several generalized inverses of tensors are also presented.
In addition to these, we discuss general solutions of multilinear
systems of
tensors using such theory.\\
\end{abstract}

\vspace{1cm}

\begin{keyword}
Moore-Penrose inverse; Tensor; Matrix; Einstein product.

\vspace{.3cm} { \it AMS subject classifications:} 15A69, 15A09
\end{keyword}

\end{frontmatter}

\newpage
\section{Introduction}\label{intro}

\subsection{Background and motivation}
The Moore-Penrose inverse of an arbitrary matrix (including singular and rectangular) has
many applications in statistics, prediction theory, control system analysis,  curve fitting,
 numerical analysis and solution of linear integral equations. But increasing ability of data
 collection systems to store huge volumes of multidimensional data and the recognition of potential
 modeling accuracy, matrix representation of data analysis is not enough to represent all the information
 content of the multiway data in different fields, including computer image and human motion recognition
 \cite{BraliNT13}, signal processing \cite{SidBrG00,  SidGiB00}, and many other areas using multiway
 data analysis \cite{CopBol89, Sho13}.  There has been a recent surge in the research and utilization
  of tensors (see the articles \cite{TamBr09, Qil05, SteCh12}) which are high-order generalization of matrices.

Tensor models are employed in numerous disciplines addressing the
problem of finding the multilinear structure in multiway data sets.
 Multilinear systems model many phenomena in engineering and science.
  For example, in continuum physics and engineering, isotropic and anisotropic elasticity
  are modelled \cite{LaiRu09} as multilinear systems. Further, the reverse order laws for
  generalized inverses of matrices \cite{BenGr03} play an important role in  theoretic research
   and numerical computations in ill-posed problems, optimization problems, and statistics problems.
    In addition, the reverse order laws for generalized inverses are also applied to
the generalized least squares problem and the weighted perturbation theory.
  It will be more applicable if we investigate reverse order laws for generalized inverses of
  tensors,
  and hence reverse order of generalized inverses of tensors will open different paths of the above areas.
   It is thus of interest to study theory
    of generalized inverses of a tensor via the Einstein product and its applications.
  In this direction, the Moore-Penrose inverse of a tensor was introduced in \cite{LizBaCY15}, very
     recently, and then the authors (\cite{LizBaCY15})  used the same notion to solve  multilinear systems of tensors.

In this paper, we further study  on generalized inverses of tensors.
This study can lead to the  enhancement of the computation of
generalized inverses
 of tensors along with solutions of multilinear structure in multidimensional systems.
In this regard, we  introduce different generalized inverses of
tensors and then provide different characterizations of generalized
inverses of tensors.
 We also   present reverse order laws for several
generalized inverses of tensors. Besides these,  a new method for
computing the Moore-Penrose inverse of a tensor is obtained.
Further, applications of these notions to multilinear systems are
studied, where we present a different approach to prove  Theorem
4.1, \cite{LizBaCY15}.

\subsection{Outline}
We organize the paper as follows. In the next subsection, we
introduce some notations and definitions which are helpful in
proving the main results in Section 2 and Section 3. In Section 2,
we prove several results concerning different generalized inverses.
In particular, we propose a method for computing the Moore-Penrose
inverse of a tensor. A few applications of these results to
multilinear systems of tensors are discussed in Section 3.

\subsection{Notations and definitions}
Multiway arrays, often referred to as tensors, are higher-order
generalizations of vectors and matrices. For a positive integer $N$,
let $[N] = \{ 1, \cdots, N\}$. An order $N$ tensor $\mc{A} =
(a_{{i_1}\cdots{i_N}})_{1\leq i_j \leq I_j} (j=1, \cdots,N)$ is a
multidimensional array with $I_1 I_2 \cdots I_N$ entries. Let
$\mathbb{C}^{I_1\times\cdots\times I_N}$ and
$\mathbb{R}^{I_1\times\cdots\times I_N}$ be the sets of the order
$N$ dimension $I_1 \times \cdots \times I_N$ tensors over the
complex field $\mathbb{C}$ and the real field $\mathbb{R}$,
respectively. For instance,
 $\mc{A} \in \mathbb{C}^{I_1\times\cdots\times I_N}$ is a multiway array with
 $N$-th order and $I_1, I_2, \cdots, I_N$ dimension in the first, second, $\cdots$ , $N$th way,
 respectively. Each entry of $\mc{A}$ is denoted by $a_{i_1...i_N}$.
 For $N =3$,  $\mc{A} \in \mathbb{C}^{I_1\times I_2 \times I_3}$ is a third order
 tensor, and $a_{i_1 i_2 i_3}$ denotes the entry of that tensor. Many results
  on tensors which have attracted great interest can be found in the
  articles \cite{TamBr09, SteCh12}.

For a tensor $\mc{A} = (a_{{i_1}...{i_N}{j_1}...{j_M}})
 \in \mathbb{C}^{I_1\times\cdots\times I_N \times J_1 \times\cdots\times J_M}$, let $\mc{B}
  = (b_{{i_1}...{i_M}{j_1}...{j_N}}) \in \mathbb{C}^{J_1\times\cdots\times J_M \times I_1 \times\cdots\times I_N}$
   be the conjugate transpose of $\mc{A}$,
   where $b_{{i_1}...{i_M}{j_1}...{j_N}} =\overline{a}_{{j_1}...{j_M}{i_1}...{i_N}}$.
The tensor $\mc{B}$ is denoted by $\mc{A}^*$.
 When $b_{{i_1}...{i_M}{j_1}...{j_N}} ={a}_{{j_1}...{j_M}{i_1}...{i_N}}$, $\mc{B}$
 is the {\it transpose} of $\mc{A}$, and is  denoted by $\mc{A}^T$.
  A tensor $\mc{D} =(d_{{i_1}...{i_N}{i_1}...{i_N}})
  \in \mathbb{C}^{I_1\times\cdots\times I_N \times I_1 \times\cdots\times I_N} $ is called a {\it diagonal
   tensor} if all its entries are zero except for $d_{{i_1}...{i_N}{i_1}...{i_N}}.$ In
   case of
     all the diagonal entries $d_{{i_1}...{i_N}{i_1}...{i_N}} = 1$, we call $\mc{D}$ as a {\it  unit
     tensor}, and is denoted by $\mc{I}$. Similarly,  $\mc{O}$ denotes the zero tensor
      in case of all the entries are zero.  A
few more notations and definitions  are introduced below for
defining generalized inverses of a tensor.

For $\mc{A} \in \mathbb{C}^{I_1\times\cdots\times I_N \times K_1
\times\cdots\times K_N }$ and $\mc{B} \in
\mathbb{C}^{K_1\times\cdots\times K_N \times J_1 \times\cdots\times
J_M }$, the Einstein product \cite{Ein} of tensors $\mc{A}$ and
$\mc{B}$
 is defined by the operation $*_N$ via
\begin{equation}
(\mc{A}*_N\mc{B})_{i_1...i_Nj_1...j_M}
=\displaystyle\sum_{k_1...k_N}a_{{i_1...i_N}{k_1...k_N}}b_{{k_1...k_N}{j_1...j_M}}.
\end{equation}
Note that  $ \mc{A}*_N\mc{B} \in \mathbb{C}^{I_1\times\cdots\times
I_N \times J_1 \times\cdots\times J_M }$.
 The associative law of this tensor product holds. In the above formula, when
  $\mc{B} \in \mathbb{C}^{K_1\times\cdots\times K_N}$, then
\begin{equation}
(\mc{A}*_N\mc{B})_{i_1...i_N} = \displaystyle\sum_{_{k_1...k_N}}
a_{{i_1...i_N}{k_1...k_N}}b_{{k_1...k_N}},
\end{equation}
where $\mc{A}*_N\mc{B} \in \mathbb{C}^{I_1\times\cdots\times I_N}$.
On the other hand, when
 $\mc{A} \in \mathbb{C}^{I_1\times\cdots\times I_N}$ and $\mc{B}$ is a vector
  $b=(b_i)\in \mathbb{C}^{I_N}$, the product is defined by the operation $\times_N$ via
\begin{equation}
(\mc{A}\times_N b)_{i_1...i_{N-1}}
=\displaystyle\sum_{i_N}a_{i_1...i_N}b_{i_N},
\end{equation}
where the tensor $\mc{A}\times_N b \in
\mathbb{C}^{I_1\times\cdots\times I_{N-1}}.$  We next collect some
definitions and results which are essential to prove our main
results.

\begin{defn}{\cite{LizBaCY15}}
For a tensor $\mc{A} = (a_{i_1... i_Nj_1...j_M})
 \in \mathbb{C}^{I_1\times\cdots\times I_N \times J_1 \times\cdots\times
 J_M},\\
  \mc{A}_{(i_1...i_N|:)}= (a_{i_1...i_N:...:})\in \mathbb{C}^{J_1\times\cdots\times J_M}$ is a
  subblock of $\mc{A}$. $Vec(\mc{A})$ is obtained by lining up all the subtensors
  in a column, and  $t$-th subblock of $Vec(\mc{A})$ is  $\mc{A}_{(i_1...i_N|:)}$,
 where $t=i_N + \displaystyle\sum_{K=1}^{N-1} \left[ (i_K - 1) \displaystyle\prod_{L=K+1}^{N} I_L \right]$.
\end{defn}

\begin{defn}{\cite{LizBaCY15}}
The Kronecker product of $\mc{A} \in
\mathbb{C}^{I_1\times\cdots\times I_N \times J_1 \times\cdots\times
J_N}$
 and $\mc{B} \in \mathbb{C}^{K_1\times\cdots\times K_M \times L_1 \times\cdots\times L_M}$, denoted by
 $\mc{A}\otimes\mc{B} = (a_{i_1...i_Nj_1...j_N}\mc{B}),$ is a `Kr-block tensor' whose $(t_1, t_2)$
  subblock is $(a_{i_1...i_Nj_1...j_N}\mc{B})$ obtained via multiplying all the entries of $\mc{B}$ by
  a constant $a_{i_1...i_Nj_1...j_N},$ where\\
    $t_1=i_N + \displaystyle\sum_{K=1}^{N-1} \left[ (i_K - 1) \displaystyle\prod_{L=K+1}^{N} I_L \right]$ and
   $t_2=j_N + \displaystyle\sum_{K=1}^{N-1} \left[ (j_K - 1) \displaystyle\prod_{L=K+1}^{N} J_L
   \right]$.
\end{defn}

\begin{remark}
From the above definition, it can be immediately seen that the
Kronecker product is not commutative, i.e., $\mc{A}\otimes\mc{B}
\neq \mc{B}\otimes\mc{A}$.
\end{remark}

The next result can be verified easily.

\begin{prop}\label{Krop}
Let $\mc{A} \in \mathbb{C}^{I_1\times\cdots\times I_N \times J_1
\times\cdots\times J_N}$,
  $\mc{B} \in \mathbb{C}^{K_1\times\cdots\times K_M \times L_1 \times\cdots\times L_M}$,
  $\mc{C} \in \mathbb{C}^{K_1\times\cdots\times K_M \times L_1 \times\cdots\times L_M}$ and
  $\mc{D} \in \mathbb{C}^{J_1\times\cdots\times J_N \times L_1 \times\cdots\times L_M} $. Then

\item[(a)] $(\mc{A}\otimes \mc{B})^* = \mc{A}^* \otimes \mc{B}^*$;
\item[(b)] $\mc{A}\otimes(\mc{B}\otimes\mc{C})
 = (\mc{A}\otimes\mc{B})\otimes \mc{C}$;
\item[(c)] $\mc{A}\otimes(\mc{B} + \mc{C})
 = \mc{A}\otimes\mc{B}+ \mc{A}\otimes \mc{C}$ and
 $(\mc{B} + \mc{C})\otimes\mc{A}= \mc{B}\otimes\mc{A}+ \mc{C}\otimes{A}$;
\item[(d)] $(\mc{A}\otimes\mc{B})*_MVec(\mc{D}) = Vec{(\mc{A}*_N\mc{D}*_M\mc{B}^T)}.$
\end{prop}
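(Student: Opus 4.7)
The plan is to verify each of the four items directly from the definitions of the Einstein product, the Kronecker product, and the vectorization operator, taking (a)--(c) as routine entrywise checks and reserving most of the effort for (d), which is the tensor analogue of the classical matrix identity $(A \otimes B)\,\mathrm{vec}(D) = \mathrm{vec}(A D B^T)$.

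For (a), I would unpack the definition: the $(t_1, t_2)$ Kr-block of $\mc{A}\otimes \mc{B}$ is $a_{i_1\ldots i_N j_1\ldots j_N}\mc{B}$, so taking the conjugate transpose swaps $(t_1, t_2)$, conjugates the scalar, and sends $\mc{B}$ to $\mc{B}^*$; comparing this with the $(t_2, t_1)$ block of $\mc{A}^*\otimes \mc{B}^*$, namely $\overline{a}_{i_1\ldots i_N j_1\ldots j_N}\mc{B}^*$, gives equality. Items (b) and (c) are similar: associativity follows by writing out the scalar coefficient of $\mc{C}$ in both $\mc{A}\otimes(\mc{B}\otimes\mc{C})$ and $(\mc{A}\otimes\mc{B})\otimes\mc{C}$ and observing both are $a_{i_1\ldots i_N j_1\ldots j_N}\, b_{k_1\ldots k_M l_1\ldots l_M}$ multiplying every entry of $\mc{C}$, while distributivity is immediate because each block of the Kronecker product depends linearly on the second (respectively first) factor.

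The substantive part is (d). I would fix an index pair $(t_1, t_2)$ for the output vector using the standard lexicographic encoding from Definition 1.1; then the $t_1$-th subblock of $(\mc{A}\otimes \mc{B})*_M \mathrm{Vec}(\mc{D})$ is, by definition of $*_M$, a sum over the ``column'' indices $(t_2)$ in which the $(t_1,t_2)$ Kr-block $a_{i_1\ldots i_N j_1\ldots j_N}\mc{B}$ is contracted via $*_M$ with the $t_2$-th subblock of $\mathrm{Vec}(\mc{D})$, which is $\mc{D}_{(j_1\ldots j_N|\,:\,)}$. Summing over $t_2$ corresponds to summing over all $(j_1,\ldots,j_N)$, so this collapses to
\begin{equation*}
\sum_{j_1,\ldots,j_N} a_{i_1\ldots i_N j_1\ldots j_N}\; \mc{B} *_M \mc{D}_{(j_1\ldots j_N|\,:\,)} ,
\end{equation*}
and I would then recognize the outer sum as the $(i_1\ldots i_N | \,:\,)$ subblock of $\mc{A}*_N(\mc{D}*_M \mc{B}^T)$ after moving $\mc{B}$ to the right via the transpose (replacing $\mc{B}*_M \mc{D}_{(j_1\ldots j_N|:)}$ with $\mc{D}_{(j_1\ldots j_N|:)}*_M \mc{B}^T$ entrywise). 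Reapplying the definition of $\mathrm{Vec}$ on the right-hand side produces the claimed equality.

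The main obstacle will be the multi-index bookkeeping in (d): one must keep straight four collections of indices ($I$, $J$, $K$, $L$) and verify that the lexicographic flattening used for $\mathrm{Vec}$ and for the Kr-blocks interacts correctly with the two independent Einstein contractions $*_N$ and $*_M$. Once the index translation between ``block position $t$'' and ``multi-index $(i_1,\ldots,i_N)$'' is set up carefully once, the remaining manipulation is just rearranging summations and invoking the transpose to swap the order of $\mc{B}$ and $\mc{D}_{(j_1\ldots j_N|:)}$ inside the $*_M$ contraction.
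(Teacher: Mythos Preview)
Your proposal is correct and follows exactly the approach implicitly suggested by the paper, which offers no proof beyond the remark that ``the next result can be verified easily.'' Your entrywise verification of (a)--(c) and blockwise unpacking of (d) is precisely the routine check the authors have in mind, and your handling of the index bookkeeping in (d) is accurate.
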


We have another result presented below on Kronecker product of
tensors.

\begin{lem}\label{kp} ({Proposition 2.3, \cite{LizBaCY15}})\\
Let $\mc{A} \in \mathbb{C}^{I_1\times\cdots\times I_N \times J_1
\times\cdots\times J_N}$,
  $\mc{B} \in \mathbb{C}^{K_1\times\cdots\times K_M \times L_1 \times\cdots\times L_M}$,
   $\mc{C} \in \mathbb{C}^{J_1\times\cdots\times J_N \times I_1 \times\cdots\times I_N}$
     and $\mc{D} \in \mathbb{C}^{L_1\times\cdots\times L_M \times K_1 \times\cdots\times K_M} $. Then
\center{$(\mc{A}\otimes\mc{B})*_M(\mc{C}\otimes\mc{D})=(\mc{A}*_N\mc{C})\otimes(\mc{B}*_M\mc{D})$}.
\end{lem}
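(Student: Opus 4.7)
My plan is to prove this identity by imitating the classical matrix argument $(A\otimes B)(C\otimes D)=AC\otimes BD$ and comparing the two sides subblock by subblock. Both the left-hand side $(\mc{A}\otimes\mc{B})*_M(\mc{C}\otimes\mc{D})$ and the right-hand side $(\mc{A}*_N\mc{C})\otimes(\mc{B}*_M\mc{D})$ should be Kr-block tensors indexed by pairs $(t_1,t_2)$, where $t_1$ encodes a multi-index $(i_1,\dots,i_N)$ and $t_2$ encodes $(i'_1,\dots,i'_N)$, in the sense of the Kronecker product definition given above.

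First I would unpack the block structure of the two Kronecker products using that definition: for a fixed intermediate block index $s$ encoding $(j_1,\dots,j_N)$, the $(t_1,s)$-subblock of $\mc{A}\otimes\mc{B}$ is $a_{i_1\cdots i_N j_1\cdots j_N}\mc{B}$, and the $(s,t_2)$-subblock of $\mc{C}\otimes\mc{D}$ is $c_{j_1\cdots j_N i'_1\cdots i'_N}\mc{D}$. Each such subblock is itself a $2M$-th order tensor that matches the shape of $\mc{B}$, respectively $\mc{D}$, so the Einstein product $*_M$ of the two big block tensors is computed blockwise by summing over $s$ after applying $*_M$ to the individual subblocks.

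Second, I would compute the $(t_1,t_2)$-subblock of the left-hand side directly. Pulling the scalars $a_{i_1\cdots i_N j_1\cdots j_N}$ and $c_{j_1\cdots j_N i'_1\cdots i'_N}$ outside of $*_M$ by its bilinearity, this subblock equals
\[
\sum_{j_1,\dots,j_N} a_{i_1\cdots i_N j_1\cdots j_N}\,c_{j_1\cdots j_N i'_1\cdots i'_N}\,\bigl(\mc{B}*_M\mc{D}\bigr).
\]
The scalar coefficient on $\mc{B}*_M\mc{D}$ is, by the very definition of the Einstein product $*_N$, the entry $(\mc{A}*_N\mc{C})_{i_1\cdots i_N i'_1\cdots i'_N}$. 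Applying the Kronecker product definition in the reverse direction, this is precisely the $(t_1,t_2)$-subblock of $(\mc{A}*_N\mc{C})\otimes(\mc{B}*_M\mc{D})$, and since the two tensors agree on every subblock, they are equal.

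The main obstacle is purely a notational/bookkeeping one: one must carefully justify that $*_M$ on the two Kr-block tensors really does factor into a sum over the intermediate block index $s$ of $*_M$ applied to the scalar-multiplied subblocks. This is where one needs to look past the layered indexing produced by the Kronecker product definition and see that the contracted indices of $*_M$ live inside the subblock factors $\mc{B}$ and $\mc{D}$, while the $s$-summation reproduces the $*_N$ contraction between $\mc{A}$ and $\mc{C}$. Once this compatibility is spelled out, the remainder of the argument is a one-line calculation parallel to the matrix case.
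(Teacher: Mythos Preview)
The paper does not give its own proof of this lemma: it is quoted verbatim as Proposition~2.3 of \cite{LizBaCY15} and used as a black box later on (in the proof that $(\mc{A}\otimes\mc{B})^\dg=\mc{A}^\dg\otimes\mc{B}^\dg$). So there is no in-paper argument to compare against.

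That said, your proposal is the natural one and is essentially how the cited source proceeds: read off the $(t_1,t_2)$-subblock on each side and check that they agree. Your identification of the crux is correct --- the only nontrivial step is verifying that $*_M$ applied to two Kr-block tensors decomposes as a block-matrix-like product, i.e.\ that the contracted indices of $*_M$ split into (i) the inner $L_1,\dots,L_M$ indices, which produce $\mc{B}*_M\mc{D}$ inside each block, and (ii) the outer block index $s$ encoding $(j_1,\dots,j_N)$, whose summation reproduces the $*_N$ contraction between $\mc{A}$ and $\mc{C}$. Once that block-product rule is written down explicitly from the definitions of the Kronecker product and the Einstein product, the rest is exactly the one-line computation you give. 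There is no missing idea; just make sure you state the block-product rule for Kr-block tensors as a standalone claim and justify it from the index formulas, rather than leaving it as an informal remark.
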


\section{Generalized inverses of tensors}
This section begins with the definitions of various generalized
inverses of a tensor  $\mc{A} \in \mathbb{C}^{I_1\times\cdots\times
I_N \times J_1 \times\cdots\times J_N}$ via
  the Einstein product. We first recall the definition of the Moore-Penrose inverse
of a tensor which was introduced in \cite{LizBaCY15}, very recently.


\begin{defn}(Definition 2.2, \cite{LizBaCY15})\label{defmpi}
Let $\mc{A} \in \mathbb{C}^{I_1\times\cdots\times I_N \times J_1
\times ... \times J_N}$. The tensor $\mc{X} \in
\mathbb{C}^{J_1\times\cdots\times J_N \times I_1 \times\cdots\times
I_N}$ satisfying the following four tensor equations:
\begin{enumerate}
\item[(1)] $\mc{A}*_N\mc{X}*_N\mc{A}= \mc{A};$
\item[(2)] $\mc{X}*_N\mc{A}*_N\mc{X}= \mc{X};$
\item[(3)] $(\mc{A}*_N\mc{X})^*=\mc{A}*_N\mc{X};$
\item[(4)] $(\mc{X}*_N\mc{A})^* = \mc{X}*_N\mc{A},$
\end{enumerate}
is called  the \textbf{Moore-Penrose inverse} of $\mc{A}$, and is
denoted by $\mc{A}^{\dg}$.
\end{defn}

Let $\lambda$ be a nonempty subset of $\{1,2,3,4\}$. Then a tensor
$\mc{X} \in \mathbb{C}^{J_1\times\cdots\times J_N \times I_1 \times
... \times I_N}$ is called a {\it $\{\lambda \}$-inverse} of
$\mc{A}$ if $\mc{X}$ satisfies equation $(i)$ for each $i\in
\lambda$. We denote such an inverse by $\mc{A}^{(\lambda)}$ while
$\mc{A}\{\lambda\}$ stands for the class of all $\{\lambda
\}$-inverses of $\mc{A}$. For $\lambda=\{1\}$ and $\lambda=\{1,
2\}$, $\mc{X}$ is called as  a {\it  generalized inverse} and  a
{\it reflexive generalized inverse} of $\mc{A}$, respectively.  If
$\lambda=\{1, 3\}$, then we have {\it $\{1, 3\}$-inverse} of
$\mc{A}$, and for $\lambda=\{1, 4\}$, we get {\it $\{1,
4\}$-inverse} of $\mc{A}$.

Brazell et al. \cite{BraliNT13} introduced the notion of ordinary
tensor inverse, and is as follows. A tensor $\mc{X} \in
\mathbb{C}^{I_1\times\cdots\times I_N \times I_1 \times\cdots\times
I_N}$ is called  the  {\it inverse} of $\mc{A}\in
\mathbb{C}^{I_1\times\cdots\times I_N \times I_1 \times\cdots\times
I_N}$  if it satisfies $\mc{A}*_N\mc{X}=\mc{X}*_N\mc{A}=\mc{I}$. It
is denoted  by $\mc{A}^{-1}$.
 A tensor $\mc{A}\in
\mathbb{C}^{I_1\times\cdots\times I_N \times I_1 \times\cdots\times
I_N}$ is {\it hermitian}  if  $\mc{A}=\mc{A}^*$. Further, a tensor
$\mc{A}\in \mathbb{C}^{I_1\times\cdots\times I_N \times I_1
\times\cdots\times I_N}$  is {\it unitary}  if  $\mc{A}\n
\mc{A}^*=\mc{A}^*\n \mc{A}=\mc{I}$, and {\it idempotent}  if $\mc{A}
\n \mc{A}= \mc{A}.$ In case of tensors of real entries, hermitian
and unitary tensors are called {\it symmetric} and {\it orthogonal}
tensors. (See Definition 3.15 and  Definition 3.16 of
\cite{BraliNT13}, respectively.)

In case of an invertible tensor $\mc{A}$, $\mc{A}^{\dg} =
\mc{A}^{(\lambda)}=\mc{A}^{-1}$. Next, we collect some known
properties of $\mc{A}^{\dg}$ (see Proposition 3.3, \cite{LizBaCY15})
which will be frequently used in this paper:
$(\mc{A}^{\dg})^{\dg}=\mc{A}$ and
$(\mc{A}^{*})^{\dg}=(\mc{A}^{\dg})^{*}$. We now move to prove a
result which was also proposed in Proposition 3.3 (4),
\cite{LizBaCY15}, and to do this we need the following lemma which
we call as singular value decomposition (SVD) of a tensor proved in
Theorem 3.17, \cite{BraliNT13} for a real tensor. The authors of
\cite{LizBaCY15} stated the same result for a complex tensor, and is
recalled next.

\begin{lem}{(Lemma 3.1, \cite{LizBaCY15})}
 A tensor $\mc{A} \in
\mathbb{C}^{I_1\times\cdots\times I_N \times J_1 \times\cdots\times
J_N}$
 can be decomposed  as $$\mc{A} = \mc{U}*_N\mc{B}*_N\mc{V}^*,$$
 where $\mc{U} \in \mathbb{C}^{I_1\times\cdots\times I_N \times I_1 \times\cdots\times I_N}$ and
 $\mc{V} \in  \mathbb{C}^{J_1\times\cdots\times J_N \times J_1 \times\cdots\times J_N}$ are unitary tensors,
 $\mc{B} \in \mathbb{C}^{I_1\times\cdots\times I_N \times J_1 \times\cdots\times J_N}$ is a
 tensor such that
 $(\mc{B})_{i_1...i_Nj_1...j_N} =0$, if $(i_1,\cdots,i_N) \neq (j_1,\cdots,j_N).$
 \end{lem}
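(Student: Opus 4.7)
The natural approach is to reduce this tensor SVD to the classical matrix SVD by unfolding the tensor. Concretely, I would introduce the bijection
\[
\phi : \mathbb{C}^{I_1\times\cdots\times I_N\times J_1\times\cdots\times J_N} \longrightarrow \mathbb{C}^{(I_1\cdots I_N)\times(J_1\cdots J_N)}
\]
defined by reshaping the multi-index $(i_1,\ldots,i_N)$ into a single row index $s$ and $(j_1,\ldots,j_N)$ into a single column index $t$ by exactly the same lexicographic rule used in the paper's definitions of $Vec$ and the Kronecker product, namely $s = i_N + \sum_{K=1}^{N-1}(i_K-1)\prod_{L=K+1}^N I_L$ and analogously for $t$.

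The first step is to verify that $\phi$ has the right algebraic properties. Reading off the Einstein product formula, $\phi(\mc{A}*_N\mc{B}) = \phi(\mc{A})\,\phi(\mc{B})$, so $\phi$ is multiplicative; it is also conjugate-linear-preserving in the sense $\phi(\mc{A}^*) = \phi(\mc{A})^*$. Consequently $\mc{U}$ is unitary in the tensor sense iff $\phi(\mc{U})$ is a unitary matrix, and a tensor $\mc{B}$ satisfies the stated support condition (nonzero only when $(i_1,\ldots,i_N)=(j_1,\ldots,j_N)$) iff $\phi(\mc{B})$ is a (rectangular) diagonal matrix.

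The second step is then immediate: apply the classical matrix SVD to $M := \phi(\mc{A})$ to get $M = U\Sigma V^*$ with $U, V$ unitary and $\Sigma$ diagonal, and pull back via $\phi^{-1}$ to obtain $\mc{U}:=\phi^{-1}(U)$, $\mc{B}:=\phi^{-1}(\Sigma)$, $\mc{V}:=\phi^{-1}(V)$. Using multiplicativity of $\phi$ and the fact that $\phi(\mc{V}^*)=\phi(\mc{V})^* = V^*$, we get
\[
\phi(\mc{U}*_N\mc{B}*_N\mc{V}^*) = U\Sigma V^* = \phi(\mc{A}),
\]
and injectivity of $\phi$ gives $\mc{A} = \mc{U}*_N\mc{B}*_N\mc{V}^*$ with the required properties.

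The only real obstacle is bookkeeping: one must check carefully that the index-reshape rule is consistent across the three places it is used (the two outer indices of $\mc{A}$, and both index groups of $\mc{U}$ and $\mc{V}$) so that the identity $\phi(\mc{A}*_N\mc{B})=\phi(\mc{A})\phi(\mc{B})$ genuinely holds; this is really just unwinding the summation in the Einstein product and recognizing it as an inner product between a row of $\phi(\mc{A})$ and a column of $\phi(\mc{B})$. Once that compatibility is in place, the decomposition is nothing more than the matrix SVD transported through $\phi^{-1}$.
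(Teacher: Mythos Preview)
Your proposal is correct and is precisely the standard argument: the present paper does not prove this lemma at all but merely recalls it from \cite{LizBaCY15} (which in turn extends Theorem~3.17 of \cite{BraliNT13}), and the proof in those sources proceeds exactly as you outline---define the reshape isomorphism $\phi$ to matrices, verify it intertwines the Einstein product with matrix multiplication and conjugate transpose, apply the classical matrix SVD, and pull back.
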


 Existence and uniqueness of
$\mc{A}^{\dg} $ is shown in Theorem 3.2, \cite{LizBaCY15}. The
authors of \cite{LizBaCY15} also showed that $\mc{A}^{\dg} =
\mc{V}*_N\mc{B}^{\dg}*_N\mc{U}^*$ in the proof of Theorem 3.2,
\cite{LizBaCY15}. Using this, we will prove the condition (a) in the
next result.

\begin{lem}\label{revA}
Let $\mc{A}\in \mathbb{C}^{I_1\times\cdots\times I_N \times J_1
\times\cdots\times J_N }$. Then\\
 (a) $({\mc{A}^*} *_N \mc{A})^{\dg}=\mc{A}^{\dg}*_N ({\mc{A}^*})^{\dg}.$\\
 (b) $\mc{A}^{\dg} = ({\mc{A}^*}*_N{\mc{A})^{\dg}}*_N\mc{A}^*$.
 \end{lem}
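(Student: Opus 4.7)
The plan is to prove (a) via the singular value decomposition from the preceding lemma, as the paper hints, and then derive (b) from (a) in a few lines by using the defining Moore--Penrose equations of $\mc{A}^{\dg}$.

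For (a), I would substitute $\mc{A} = \mc{U} *_N \mc{B} *_N \mc{V}^*$ and reduce the claim to the ``diagonal'' tensor $\mc{B}$. Using the identity $\mc{A}^{\dg} = \mc{V} *_N \mc{B}^{\dg} *_N \mc{U}^*$ already recorded in the text, together with $(\mc{A}^*)^{\dg} = (\mc{A}^{\dg})^*$ and the unitarity relations $\mc{U}^* *_N \mc{U} = \mc{I}$, $\mc{V}^* *_N \mc{V} = \mc{I}$, the right-hand side of (a) becomes
\[
\mc{A}^{\dg} *_N (\mc{A}^*)^{\dg} \;=\; \mc{V} *_N \mc{B}^{\dg} *_N (\mc{B}^{\dg})^* *_N \mc{V}^*,
\]
while $\mc{A}^* *_N \mc{A} = \mc{V} *_N (\mc{B}^* *_N \mc{B}) *_N \mc{V}^*$. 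A short preparatory lemma, verified by direct substitution into the four Moore--Penrose equations, states that conjugation by unitary tensors preserves the Moore--Penrose inverse, i.e.\ $(\mc{U} *_N \mc{C} *_N \mc{V}^*)^{\dg} = \mc{V} *_N \mc{C}^{\dg} *_N \mc{U}^*$. Applied with $\mc{C} = \mc{B}^* *_N \mc{B}$, this reduces (a) to the diagonal-only identity $(\mc{B}^* *_N \mc{B})^{\dg} = \mc{B}^{\dg} *_N (\mc{B}^{\dg})^*$. Since every tensor involved is supported only on its diagonal, the identity collapses entrywise to the scalar equality $1/|b|^2 = (1/b)\,\overline{(1/b)}$ on the support of $\mc{B}$, which is immediate. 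This entrywise check on the diagonal is the only real computation and is the main (but routine) obstacle.

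For (b), the plan is to multiply the identity from (a) on the right by $\mc{A}^*$ and simplify using Moore--Penrose equations (2) and (3):
\[
(\mc{A}^* *_N \mc{A})^{\dg} *_N \mc{A}^* \;=\; \mc{A}^{\dg} *_N (\mc{A}^*)^{\dg} *_N \mc{A}^* \;=\; \mc{A}^{\dg} *_N (\mc{A}^{\dg})^* *_N \mc{A}^* \;=\; \mc{A}^{\dg} *_N (\mc{A} *_N \mc{A}^{\dg})^*.
\]
By equation (3), $(\mc{A} *_N \mc{A}^{\dg})^* = \mc{A} *_N \mc{A}^{\dg}$, so the expression collapses to $\mc{A}^{\dg} *_N \mc{A} *_N \mc{A}^{\dg}$, which equals $\mc{A}^{\dg}$ by equation (2). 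Hence (b) drops out of (a) with essentially no further computation.
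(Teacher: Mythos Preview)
Your proof of (a) is correct and matches the paper's approach exactly: both reduce via the SVD $\mc{A}=\mc{U}*_N\mc{B}*_N\mc{V}^*$ to the diagonal identity $(\mc{B}^* *_N \mc{B})^{\dg} = \mc{B}^{\dg} *_N (\mc{B}^*)^{\dg}$, inserting $\mc{U}^* *_N \mc{U}=\mc{I}$ at the end to recover $\mc{A}^{\dg}*_N(\mc{A}^*)^{\dg}$. You are simply more explicit than the paper about the unitary-conjugation lemma and the entrywise diagonal check, both of which the paper takes for granted.

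For (b) your argument is correct but genuinely shorter than the paper's. The paper sets $\mc{B}=(\mc{A}^* *_N \mc{A})^{\dg}*_N\mc{A}^*$ and verifies all four Moore--Penrose equations for $\mc{B}$ as an inverse of $\mc{A}$, invoking (a) only in the check of $\mc{A}*_N\mc{B}*_N\mc{A}=\mc{A}$. Your route---multiply (a) on the right by $\mc{A}^*$, rewrite $(\mc{A}^*)^{\dg}*_N\mc{A}^*$ as $(\mc{A}*_N\mc{A}^{\dg})^*$, then apply equations (3) and (2)---reaches $\mc{A}^{\dg}$ in three lines and bypasses the four-condition verification entirely. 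Both arguments use (a); yours exploits it more efficiently by working directly with the known inverse $\mc{A}^{\dg}$ rather than re-deriving it from the defining axioms.
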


\begin{proof}
(a) As an application of  SVD of $\mc{A}$, we have $\mc{A}^{\dg} =
\mc{V}*_N\mc{B}^{\dg}*_N\mc{U}^*$. Hence
\begin{eqnarray*}
({\mc{A}^*} *_N \mc{A})^{\dg} & = & (\mc{V}*_N\mc{B}^{*}*_N\mc{U}^*
*_N \mc{U}*_N\mc{B}*_N\mc{V}^*)^{\dg}\\
& = &  (\mc{V} *_N (\mc{B}^{*} *_N \mc{B})^{\dg} *_N\mc{V}^*\\
& = &  \mc{V} *_N \mc{B}^{\dg} *_N (\mc{B}^{*})^{\dg} *_N\mc{V}^*\\
& = & \mc{V} *_N \mc{B}^{\dg}*_N\mc{U}^* *_N \mc{U}*_N(\mc{B}^{*})^{\dg} *_N\mc{V}^* \\
& = & \mc{A}^{\dg}*_N{\mc{A}^*}^{\dg}.
\end{eqnarray*}

(b)  Let $\mc{C}=\mc{A}^* *_N \mc{A}$,
$\mc{B}=({\mc{A}^*}*_N{\mc{A})^{\dg}}*_N\mc{A}^*= \mc{C}^{\dg}
*_N\mc{A}^*$ and  $\mc{X}=\mc{A}$. Then $\mc{B} \n
\mc{X}=\mc{C}^{\dg} \n \mc{C}$. So $\mc{B} \n \mc{X}$ is hermitian.
Then it follows that  $\mc{B} \n \mc{X} \n \mc{B}=\mc{B}$ by using
condition (2) of Definition \ref{defmpi} for tensor $\mc{C}$.
Similarly, $\mc{X} \n \mc{B}= \mc{A} \n (\mc{A}^* \n\mc{A})^{\dg} \n
\mc{A}^*.$ So $\mc{X}$ satisfies condition (4)  of Definition
\ref{defmpi}. The remained condition (2) is shown hereunder.
\begin{eqnarray*}
 \mc{X} \n \mc{B} \n \mc{X}
& = &  \mc{A} \n (\mc{A}^* \n\mc{A})^{\dg} \n \mc{A}^*\n\mc{A}\\
& = &   \mc{A} \n \mc{A}^{\dg} \n (\mc{A}^*) ^{\dg} \n\mc{A}^*\n\mc{A}\\
& = & \mc{A} \n\mc{A}^{\dg} \n (\mc{A} \n\mc{A}^{\dg})^* \n\mc{A}\\
& = &  \mc{A} \n\mc{A}^{\dg}\n \mc{A} \n\mc{A}^{\dg}\n
\mc{A}=\mc{X}.
\end{eqnarray*}
\end{proof}

\begin{remark}
Nevertheless,  Lemma \ref{revA} (a) is not true if we replace
$\mc{A}^*$ by any other tensor $\mc{B}$, i.e., $(\mc{B}\n
\mc{A})^\dg \neq \mc{B}^\dg *_N \mc{A}^\dg$, where $\mc{A}$  and $
\mc{B} \in \mathbb{C}^{I_1\times\cdots\times I_N \times I_1
\times\cdots\times I_N}$.
\end{remark}

\begin{ex}
Let
 $\mc{A} = (a_{ijkl})_{1 \leq i,j,k,l \leq 2}  \in \mathbb{R}^{2\times 2\times 2\times 2}$ and $\mc{B}
 = (b_{ijkl})_{1 \leq i,j,k,l \leq 2}\in \mathbb{R}^{2\times 2\times 2\times 2}$ be two tensors such that
\begin{eqnarray*}
a_{ij11} =
    \begin{pmatrix}
    0 & 0\\
    0 & 1\\
    \end{pmatrix},
a_{ij21} =
    \begin{pmatrix}
    1 & -1\\
    0 & 0\\
    \end{pmatrix},
a_{ij12} =
    \begin{pmatrix}
    0 & 1\\
    0 & 0\\
    \end{pmatrix},
a_{ij22} =
    \begin{pmatrix}
    1 & 0\\
    -1 & 0\\
    \end{pmatrix}, ~
\end{eqnarray*}
and
\begin{eqnarray*}
b_{ij11} =
    \begin{pmatrix}
    1 & -1\\
    0 & 0\\
    \end{pmatrix},
b_{ij21} =
    \begin{pmatrix}
    0 & 1\\
    0 & 0\\
    \end{pmatrix},
b_{ij12} =
    \begin{pmatrix}
    0 & 0\\
    -1 & 0\\
    \end{pmatrix},
b_{ij22} =
    \begin{pmatrix}
    0 & 0\\
    1 & 0\\
    \end{pmatrix},
\end{eqnarray*}
respectively.  Then   ${\mc{A}}^{\dag} = (x_{ijkl})_{1 \leq i,j,k,l
\leq 2}  \in \mathbb{R}^{2\times 2\times 2\times 2}$
 and ${\mc{B}}^{\dag} = (y_{ijkl})_{1 \leq i,j,k,l \leq 2}
   \in \mathbb{R}^{2\times 2\times 2\times 2}$, where
\begin{eqnarray*}
x_{ij11} =
    \begin{pmatrix}
    0 & 1\\
    1 & 0\\
    \end{pmatrix},
x_{ij21} =
    \begin{pmatrix}
    0 & 1\\
    1 & -1\\
    \end{pmatrix},
x_{ij12} =
    \begin{pmatrix}
    0 & 1\\
    0 & 0\\
    \end{pmatrix},
x_{ij22} =
    \begin{pmatrix}
    1 & 0\\
    0 & 0\\
    \end{pmatrix}, ~
\end{eqnarray*} and
\begin{eqnarray*}
y_{ij11} =
    \begin{pmatrix}
    1 & 0\\
    1 & 0\\
    \end{pmatrix},
y_{ij21} =
    \begin{pmatrix}
    0 & -\frac{1}{2}\\
    0 & \frac{1}{2}\\
    \end{pmatrix},
y_{ij12} =
    \begin{pmatrix}
    0 & 0\\
    1 & 0\\
    \end{pmatrix},
y_{ij22} =
    \begin{pmatrix}
    0 & 0\\
    0 & 0\\
    \end{pmatrix},
\end{eqnarray*}
respectively. So  $\mc{B}^{\dg} *_N \mc{A}^{\dg}  =
 (d_{ijkl})_{1 \leq i,j,k,l \leq 2} \in \mathbb{R}^{2\times 2 \times 2\times 2}$,
 where
\begin{eqnarray*}
d_{ij11} =
    \begin{pmatrix}
    0 & -\frac{1}{2}\\
    1 & \frac{1}{2}\\
    \end{pmatrix},
d_{ij21} =
    \begin{pmatrix}
    0 & -\frac{1}{2}\\
    1 & \frac{1}{2}\\
    \end{pmatrix},
d_{ij12} =
    \begin{pmatrix}
    0 & 0\\
    1 & 0\\
    \end{pmatrix},
d_{ij22} =
    \begin{pmatrix}
    1 & 0\\
    1 & 0\\
    \end{pmatrix},
\end{eqnarray*}
 and  ${(\mc{A}*_N\mc{B})}^{\dag} = (c_{ijkl})_{1 \leq i,j,k,l \leq
2}  \in \mathbb{R}^{2\times 2\times 2\times 2}$, where
\begin{eqnarray*}
c_{ij11} =
    \begin{pmatrix}
    0 & -\frac{1}{2}\\
    1 & \frac{1}{2}\\
    \end{pmatrix},
c_{ij21} =
    \begin{pmatrix}
    0 & 0\\
    0 & 0\\
    \end{pmatrix},
c_{ij12} =
    \begin{pmatrix}
    0 & 0\\
    1 & 0\\
    \end{pmatrix},
c_{ij22} =
    \begin{pmatrix}
    1 & 0\\
    1 & 0\\
    \end{pmatrix}.
\end{eqnarray*}
Hence
\begin{eqnarray*}
 (\mc{A}*_N\mc{B})^{\dg} \neq \mc{B}^{\dg} *_N \mc{A}^{\dg}.
\end{eqnarray*}
\end{ex}

 Some sufficient conditions are obtained below for the equality case.
\begin{prop}
For $\mc{A} \in \mathbb{C}^{I_1\times\cdots\times I_N \times J_1
\times\cdots\times J_N} $
 and $\mc{B} \in \mathbb{C}^{ J_1
\times\cdots\times J_N \times I_1\times\cdots\times I_N }$,
$(\mc{A}*_N\mc{B})^\dg = \mc{B}^\dg
*_N\mc{A}^\dg$, if one of the following conditions holds.\\
 \noindent(a) $\mc{B} =\mc{A}^*$.\\
 (b) $\mc{B} =\mc{A}^\dg$.\\
 (c) $\mc{A}^* *_N\mc{A} = \mc{I}$.\\
 (d) $\mc{B}*_N\mc{B}^* = \mc{I}$.
\end{prop}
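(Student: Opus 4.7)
The unifying strategy is to set $\mc{X} := \mc{B}^\dg *_N \mc{A}^\dg$ and check directly that $\mc{X}$ satisfies the four Penrose equations of Definition \ref{defmpi} with respect to $\mc{A} *_N \mc{B}$; uniqueness of the Moore-Penrose inverse will then force $(\mc{A} *_N \mc{B})^\dg = \mc{X}$ in each case. The first two parts are essentially immediate from the earlier material. Case (a) follows by applying Lemma \ref{revA}(a) with $\mc{A}^*$ in place of $\mc{A}$, which, using $(\mc{A}^*)^* = \mc{A}$ and $(\mc{A}^*)^\dg = (\mc{A}^\dg)^*$, gives $(\mc{A} *_N \mc{A}^*)^\dg = (\mc{A}^*)^\dg *_N \mc{A}^\dg$, exactly the required identity. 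Case (b) rests on the observation that $\mc{P} := \mc{A} *_N \mc{A}^\dg$ is hermitian by Penrose condition (3) and idempotent by Penrose condition (1); hence all four Penrose equations are satisfied with $\mc{P}$ in place of both $\mc{A}$ and $\mc{X}$, so $\mc{P}^\dg = \mc{P}$. Combined with $(\mc{A}^\dg)^\dg = \mc{A}$, this yields $(\mc{A} *_N \mc{A}^\dg)^\dg = \mc{A} *_N \mc{A}^\dg = (\mc{A}^\dg)^\dg *_N \mc{A}^\dg$.

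Parts (c) and (d) are handled through a short preliminary observation: if $\mc{A}^* *_N \mc{A} = \mc{I}$ then $\mc{A}^\dg = \mc{A}^*$, and dually $\mc{B} *_N \mc{B}^* = \mc{I}$ implies $\mc{B}^\dg = \mc{B}^*$. Each is a one-line check of the four Penrose axioms with $\mc{X} = \mc{A}^*$ (respectively $\mc{X} = \mc{B}^*$): the hermiticity conditions are automatic, and the two sandwich identities follow by collapsing the $\mc{I}$ factor. Armed with this, in case (c) one takes $\mc{X} = \mc{B}^\dg *_N \mc{A}^*$; each product appearing in the Penrose equations for the pair $(\mc{A} *_N \mc{B}, \mc{X})$ contains an interior $\mc{A}^* *_N \mc{A}$ that collapses to $\mc{I}$, after which Penrose conditions (1) and (2) reduce to the corresponding axioms for $\mc{B}$, condition (4) reads $\mc{X} *_N \mc{A} *_N \mc{B} = \mc{B}^\dg *_N \mc{B}$, which is hermitian by axiom (4) for $\mc{B}$, and condition (3) reads $\mc{A} *_N \mc{B} *_N \mc{X} = \mc{A} *_N (\mc{B} *_N \mc{B}^\dg) *_N \mc{A}^*$, hermitian as a sandwich of the hermitian tensor $\mc{B} *_N \mc{B}^\dg$. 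Case (d) is symmetric: set $\mc{X} = \mc{B}^* *_N \mc{A}^\dg$, collapse $\mc{B} *_N \mc{B}^* = \mc{I}$ in the interior, and reduce everything to the Penrose axioms for $\mc{A}$.

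The only genuine obstacle is bookkeeping rather than any conceptual subtlety. Once the pattern of using the hypothesised identity to collapse the innermost $\mc{A}^* *_N \mc{A}$ or $\mc{B} *_N \mc{B}^*$ factor is spotted, each of the four Penrose equations reduces, through associativity of the Einstein product alone, to a single Penrose axiom for either $\mc{A}$ or $\mc{B}$, together with the basic identities $(\mc{A}^\dg)^* = (\mc{A}^*)^\dg$ and $(\mc{A}^\dg)^\dg = \mc{A}$. No new lemma is required beyond Lemma \ref{revA}(a).
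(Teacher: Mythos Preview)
The paper states this proposition without proof, so there is no argument of the authors to compare against. Your proof is correct in all four cases: (a) is an immediate corollary of Lemma~\ref{revA}(a) applied to $\mc{A}^*$ together with $(\mc{A}^*)^\dg=(\mc{A}^\dg)^*$; (b) follows since $\mc{A}*_N\mc{A}^\dg$ is hermitian idempotent (hence its own Moore--Penrose inverse) and $(\mc{A}^\dg)^\dg=\mc{A}$; and in (c) and (d) the preliminary observation $\mc{A}^\dg=\mc{A}^*$ (respectively $\mc{B}^\dg=\mc{B}^*$) lets you collapse the interior $\mc{A}^**_N\mc{A}$ or $\mc{B}*_N\mc{B}^*$ to $\mc{I}$, after which each of the four Penrose equations for $\mc{A}*_N\mc{B}$ reduces to a Penrose axiom for $\mc{B}$ or $\mc{A}$ alone. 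No step requires anything beyond associativity of the Einstein product and the identities already recorded in Proposition~3.3 of \cite{LizBaCY15}, so your write-up is complete as it stands.
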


Using the method as in the proof of Lemma \ref{revA}, one can prove
the next lemma.

\begin{lem}\label{lemm11}
Let $\mc{A}\in \mathbb{C}^{I_1\times\cdots\times I_N \times J_1
\times\cdots\times J_N }$. Then the following are true.  \\
(a) $\mc{A}^{\dg} =
{\mc{A}^*}*_N(\mc{A}*_N\mc{A}^*)^{\dg}.$\\
(b) $\mc{A}^* =\mc{A}^**_N\mc{A}*_N\mc{A}^{\dg}
=\mc{A}^{\dg}*_N\mc{A}*_N\mc{A}^*$. \\
(c) $\mc{A} =\mc{A}*_N\mc{A}^* *_N(\mc{A}^*)^\dg =
(\mc{A}^*)^\dg *_N \mc{A}^* *_N \mc{A}$. \\
(d) $(\mc{U}*_N\mc{A}*_N\mc{V})^{\dg} =
 \mc{V}^**_N\mc{A}^{\dg}*_N\mc{U}^*,~ \textnormal{where}~ \mc{U}$  and $\mc{V}$ are unitary
 tensors.\\
(e) $\mc{A}^\dg*_N\mc{A} = \mc{A}^* *_N(\mc{A}^*)^\dg$ and
$\mc{A}*_N\mc{A}^\dg = (\mc{A}^*)^\dg *_N\mc{A}^*$.
 \end{lem}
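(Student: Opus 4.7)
The plan is to follow the template of the proof of Lemma \ref{revA}: for each identity either verify the four defining axioms of Definition \ref{defmpi} against a candidate tensor, or transfer the identity from one already proved in Lemma \ref{revA} via the involution $(\mc{A}^*)^\dg = (\mc{A}^\dg)^*$ and the SVD. Throughout I would use, without further comment, associativity of $*_N$, the involutive identity $(\mc{A}^\dg)^\dg = \mc{A}$, and the four axioms of Definition \ref{defmpi} for $\mc{A}$.

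For part (a), the shortest route is to apply Lemma \ref{revA}(b) with $\mc{A}^*$ in place of $\mc{A}$, obtaining $(\mc{A}^*)^\dg = (\mc{A}*_N\mc{A}^*)^\dg *_N \mc{A}$. Taking the conjugate transpose of both sides, using $((\mc{A}^*)^\dg)^* = \mc{A}^\dg$ and the hermiticity of $\mc{A}*_N\mc{A}^*$ (which makes $(\mc{A}*_N\mc{A}^*)^\dg$ hermitian as well), yields (a). For part (b), I would rewrite $\mc{A}*_N\mc{A}^\dg$ via axiom (3) as $(\mc{A}*_N\mc{A}^\dg)^* = (\mc{A}^*)^\dg *_N \mc{A}^*$; then
\[
\mc{A}^* *_N \mc{A} *_N \mc{A}^\dg = \mc{A}^* *_N (\mc{A}^*)^\dg *_N \mc{A}^* = \mc{A}^*
\]
by axiom (1) applied to $\mc{A}^*$, since $(\mc{A}^*)^\dg$ is the Moore--Penrose inverse of $\mc{A}^*$. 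The second equality in (b) is proved analogously using axiom (4). Part (c) is then obtained by applying (b) to $\mc{A}^*$ in place of $\mc{A}$ and simplifying $(\mc{A}^*)^* = \mc{A}$.

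For part (d), I would set $\mc{X} = \mc{V}^**_N\mc{A}^\dg*_N\mc{U}^*$ and verify directly each of the four conditions of Definition \ref{defmpi} with respect to $\mc{U}*_N\mc{A}*_N\mc{V}$. Thanks to the unitary identities $\mc{U}^**_N\mc{U} = \mc{V}*_N\mc{V}^* = \mc{I}$, consecutive inner factors cancel pairwise, reducing axiom (1) and axiom (2) for $\mc{X}$ to the corresponding axioms for $\mc{A}^\dg$; the hermiticity conditions (3) and (4) follow from the computation $(\mc{U}*_N\mc{Y}*_N\mc{U}^*)^* = \mc{U}*_N\mc{Y}^**_N\mc{U}^*$ applied with $\mc{Y} = \mc{A}*_N\mc{A}^\dg$ or $\mc{Y} = \mc{A}^\dg*_N\mc{A}$. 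Finally, part (e) is essentially immediate: axioms (3) and (4) say that $\mc{A}*_N\mc{A}^\dg$ and $\mc{A}^\dg*_N\mc{A}$ coincide with their conjugate transposes, and the latter are exactly $(\mc{A}^*)^\dg*_N\mc{A}^*$ and $\mc{A}^**_N(\mc{A}^*)^\dg$ respectively, after using $(\mc{A}^\dg)^* = (\mc{A}^*)^\dg$.

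There is no conceptual obstacle; the difficulty is purely bookkeeping. The point requiring the most care is keeping track of the commutativity between the two involutions $*$ and $\dg$ when transferring identities from $\mc{A}$ to $\mc{A}^*$ (this is what makes (a) and (c) follow from already proved statements rather than requiring a fresh SVD computation). Once this is handled, each of the five identities is a two- or three-line manipulation using one of the four defining axioms.
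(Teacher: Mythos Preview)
Your proposal is correct and follows essentially the route the paper indicates (the paper gives no explicit proof, only the remark that the method of Lemma~\ref{revA} applies). Your derivation of (a) by transferring Lemma~\ref{revA}(b) through the involution $(\mc{A}^*)^\dg=(\mc{A}^\dg)^*$ is a mild streamlining over redoing the SVD computation from scratch, but otherwise the argument---direct verification of the four axioms for (d), and axiom manipulations for (b), (c), (e)---is exactly what the paper has in mind.
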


Using Lemma \ref{lemm11} (b) and (c), one can prove the result
obtained below.

\begin{cor}
Let $\mc{A} \in \mathbb{C}^{I_1\times\cdots\times I_N \times I_1
\times\cdots\times I_N}$
 and $\mc{A}*_N\mc{A}^\dg = \mc{A}^\dg *_N\mc{A}$. Then
\begin{itemize}
\item[(a)] there exists a
$\mc{X} \in \mathbb{C}^{I_1\times\cdots\times I_N \times I_1
\times\cdots\times I_N}$ such that $\mc{A}*_N\mc{X} = \mc{A}^*$, and
\item[(b)] there exists a
 $\mc{Y} \in \mathbb{C}^{I_1\times\cdots\times I_N \times I_1 \times\cdots\times I_N}$
 such that $\mc{A}^* *_N\mc{Y} = \mc{A}$.
\end{itemize}
\end{cor}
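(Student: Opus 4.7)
The plan is to produce the witnesses $\mc{X}$ and $\mc{Y}$ explicitly by combining the identities in Lemma \ref{lemm11} (b) and (c) with the commuting hypothesis $\mc{A}*_N\mc{A}^\dg = \mc{A}^\dg *_N\mc{A}$.

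For part (a), I would start from the second equality of Lemma \ref{lemm11}(b), namely $\mc{A}^* = \mc{A}^{\dg}*_N\mc{A}*_N\mc{A}^*$. Applying the hypothesis to the leading product $\mc{A}^{\dg}*_N\mc{A}$ rewrites this as $\mc{A}^* = \mc{A}*_N\mc{A}^{\dg}*_N\mc{A}^*$, so the choice $\mc{X} = \mc{A}^\dg *_N \mc{A}^*$ solves the equation $\mc{A}*_N\mc{X} = \mc{A}^*$.

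For part (b), the hypothesis is about $\mc{A}$, not $\mc{A}^*$, so the first step is to transfer it. Taking conjugate transposes of $\mc{A}*_N\mc{A}^\dg = \mc{A}^\dg *_N\mc{A}$ and using $(\mc{A}^\dg)^* = (\mc{A}^*)^\dg$ (a property of the Moore-Penrose inverse recalled just before Lemma \ref{revA}) yields $(\mc{A}^*)^\dg *_N \mc{A}^* = \mc{A}^* *_N (\mc{A}^*)^\dg$. Now I would start from the second equality of Lemma \ref{lemm11}(c), namely $\mc{A} = (\mc{A}^*)^{\dg}*_N\mc{A}^**_N\mc{A}$, and commute the leading two factors using the identity just derived to obtain $\mc{A} = \mc{A}^**_N(\mc{A}^*)^{\dg}*_N\mc{A}$. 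Hence $\mc{Y} = (\mc{A}^*)^{\dg}*_N\mc{A}$ solves $\mc{A}^* *_N\mc{Y} = \mc{A}$.

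The argument is essentially bookkeeping; the only nonobvious step is part (b), where one must first promote the commuting hypothesis on $(\mc{A},\mc{A}^\dg)$ to the corresponding commuting relation on $(\mc{A}^*,(\mc{A}^*)^\dg)$ before applying Lemma \ref{lemm11}(c). Once that observation is made, both witnesses drop out immediately.
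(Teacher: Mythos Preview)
Your proof is correct and follows exactly the route the paper indicates: it says only ``Using Lemma \ref{lemm11} (b) and (c), one can prove the result obtained below,'' and you carry this out explicitly by combining those identities with the commuting hypothesis. One minor remark: in part (b) you derive $(\mc{A}^*)^\dg *_N \mc{A}^* = \mc{A}^* *_N (\mc{A}^*)^\dg$ by taking conjugate transposes, but Lemma \ref{lemm11}(e) gives this directly from $\mc{A}*_N\mc{A}^\dg = \mc{A}^\dg *_N\mc{A}$ without that intermediate step.
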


Similarly, the following lemma can be proved.

\begin{lem}
Let $\mc{A}\in \mathbb{C}^{I_1\times\cdots\times I_N \times I_1
\times\cdots\times I_N }$. Then the following are true.  \\
(a) If $\mc{A}$ is hermitian and idempotent, then $\mc{A}^{\dg} =
\mc{A}$.\\
(b) $\mc{A}^{\dg} = \mc{A}^*$ if and only if $\mc{A}^* *_N \mc{A}$
is idempotent.\\
(c) $\mc{A} *_N \mc{A}^\dg$, $\mc{A}^\dg *_N\mc{A}$, $\mc{I}
-\mc{A}*_N\mc{A}^\dg$ and
 $\mc{I} - \mc{A}^\dg *_N\mc{A}$ are all idempotent.
\end{lem}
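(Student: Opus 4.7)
My plan is to prove (a), (b), (c) separately, leaning on the uniqueness of the Moore-Penrose inverse together with the machinery already developed in Lemma \ref{revA} and Lemma \ref{lemm11}.

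For (a), I would simply verify that $\mc{X}=\mc{A}$ satisfies the four defining equations of Definition \ref{defmpi}. Conditions (1) and (2) both reduce to $\mc{A}*_N\mc{A}*_N\mc{A}=\mc{A}$, which is immediate from idempotency; conditions (3) and (4) both reduce to $(\mc{A}*_N\mc{A})^*=\mc{A}*_N\mc{A}$, which follows at once from $\mc{A}^*=\mc{A}$ together with idempotency. Uniqueness of the Moore-Penrose inverse then yields $\mc{A}^{\dg}=\mc{A}$.

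For (b), the forward direction is short: if $\mc{A}^{\dg}=\mc{A}^*$, then condition (1) of Definition \ref{defmpi} gives $\mc{A}*_N\mc{A}^**_N\mc{A}=\mc{A}$, so $(\mc{A}^**_N\mc{A})*_N(\mc{A}^**_N\mc{A})=\mc{A}^**_N\mc{A}$. For the converse, assume $\mc{A}^**_N\mc{A}$ is idempotent. Since $\mc{A}^**_N\mc{A}$ is automatically hermitian, part (a) applies and gives $(\mc{A}^**_N\mc{A})^{\dg}=\mc{A}^**_N\mc{A}$; Lemma \ref{revA}(b) then yields $\mc{A}^{\dg}=(\mc{A}^**_N\mc{A})*_N\mc{A}^*=\mc{A}^**_N\mc{A}*_N\mc{A}^*$. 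Right-multiplying by $\mc{A}$ and collapsing by the idempotency hypothesis shows $\mc{A}^{\dg}*_N\mc{A}=\mc{A}^**_N\mc{A}$. Feeding this into the identity $\mc{A}^*=\mc{A}^{\dg}*_N\mc{A}*_N\mc{A}^*$ from Lemma \ref{lemm11}(b) finally gives $\mc{A}^*=\mc{A}^**_N\mc{A}*_N\mc{A}^*=\mc{A}^{\dg}$.

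For (c), the idempotency of $\mc{A}*_N\mc{A}^{\dg}$ and $\mc{A}^{\dg}*_N\mc{A}$ follows by collapsing the middle triple using Moore-Penrose conditions (2) and (1) respectively, e.g., $(\mc{A}*_N\mc{A}^{\dg})*_N(\mc{A}*_N\mc{A}^{\dg})=\mc{A}*_N(\mc{A}^{\dg}*_N\mc{A}*_N\mc{A}^{\dg})=\mc{A}*_N\mc{A}^{\dg}$. The idempotency of $\mc{I}-\mc{A}*_N\mc{A}^{\dg}$ and $\mc{I}-\mc{A}^{\dg}*_N\mc{A}$ then drops out from the elementary expansion $(\mc{I}-\mc{P})*_N(\mc{I}-\mc{P})=\mc{I}-2\mc{P}+\mc{P}*_N\mc{P}=\mc{I}-\mc{P}$ applied to the two projectors just shown to be idempotent. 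The main obstacle is the converse of (b): everything else is a direct substitution, but there one must chain Lemma \ref{revA}(b), part (a) applied to $\mc{A}^**_N\mc{A}$, and Lemma \ref{lemm11}(b) in just the right order so that the intermediate expression $\mc{A}^**_N\mc{A}*_N\mc{A}^*$ collapses back to $\mc{A}^*$.
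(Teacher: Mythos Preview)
Your proof is correct, and since the paper does not actually supply a proof for this lemma (it only says ``Similarly, the following lemma can be proved'' after Lemma~\ref{lemm11}), your direct-verification approach---checking the four Penrose equations for (a), invoking Lemma~\ref{revA}(b) and Lemma~\ref{lemm11}(b) for the converse in (b), and collapsing via conditions (1)--(2) for (c)---is exactly the kind of argument the paper is gesturing at. There is nothing to compare against beyond that.
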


Analogous results to all the above discussed results for matrices
can be found in Chapter 1, \cite{BenGr03}. We now proceed to discuss
a few properties of $\{1\}$-inverse of a tensor, below.

\begin{thm}
The following three conditions are equivalent:\\
 (i) $(\mc{A}^{(1)})^*\{1\}= (\mc{A}^*)^{(1)}\{1\}$.\\
 (ii) $ \mc{A} \n (\mc{A}^*\n \mc{A})^{(1)}\n \mc{A}^*\n \mc{A} =\mc{A}$.\\
 (iii) $(\mc{A} \n (\mc{A}^*\n \mc{A})^{(1)}\n \mc{A}^*)^*=\mc{A} \n (\mc{A}^*\n \mc{A})^{(1)}\n \mc{A}^*.$
\end{thm}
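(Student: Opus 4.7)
My plan is to establish the equivalence by proving (ii) $\Leftrightarrow$ (iii) as the substantive content, and connecting to (i) through the bijection $\mc{Z} \mapsto \mc{Z}^*$ between $\mc{A}\{1\}$ and $\mc{A}^*\{1\}$, which arises by conjugating the defining equation $\mc{A}\n \mc{Z}\n \mc{A} = \mc{A}$. Throughout I abbreviate $\mc{Y} := (\mc{A}^*\n \mc{A})^{(1)}$; since $\mc{A}^*\n \mc{A}$ is Hermitian, the same conjugation trick also shows $\mc{Y}^* \in (\mc{A}^*\n \mc{A})\{1\}$ whenever $\mc{Y}$ is. This symmetry is the engine of everything below.

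For (ii) $\Rightarrow$ (iii), I would take the conjugate transpose of (ii) to obtain $\mc{A}^* = \mc{A}^*\n \mc{A}\n \mc{Y}^*\n \mc{A}^*$, and then substitute this expression for the trailing $\mc{A}^*$ inside the product $\mc{A}\n \mc{Y}\n \mc{A}^*$. Regrouping the factors as $(\mc{A}\n \mc{Y}\n \mc{A}^*\n \mc{A})\n \mc{Y}^*\n \mc{A}^*$ and applying (ii) inside the bracket collapses the product to $\mc{A}\n \mc{Y}^*\n \mc{A}^* = (\mc{A}\n \mc{Y}\n \mc{A}^*)^*$, which is exactly the Hermiticity statement (iii).

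For (iii) $\Rightarrow$ (ii), I would post-multiply the Hermiticity identity by $\mc{A}$, obtaining $\mc{A}\n \mc{Y}\n \mc{A}^*\n \mc{A} = \mc{A}\n \mc{Y}^*\n \mc{A}^*\n \mc{A}$ and reducing the goal to $\mc{A}\n \mc{Y}^*\n \mc{A}^*\n \mc{A} = \mc{A}$. Setting $\mc{V} := \mc{A}\n \mc{Y}^*\n \mc{A}^*\n \mc{A} - \mc{A}$, the defining identity $\mc{A}^*\n \mc{A}\n \mc{Y}^*\n \mc{A}^*\n \mc{A} = \mc{A}^*\n \mc{A}$ for $\mc{Y}^*$ gives $\mc{A}^*\n \mc{V} = \mc{O}$. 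Expanding $\mc{V}^*\n \mc{V}$ using $\mc{V}^* = \mc{A}^*\n \mc{A}\n \mc{Y}\n \mc{A}^* - \mc{A}^*$ and substituting $\mc{A}^*\n \mc{V} = \mc{O}$ produces $\mc{V}^*\n \mc{V} = \mc{O}$, whence $\mc{V} = \mc{O}$ by inspecting the diagonal entries of the Einstein product.

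For the connection with (i), the bijection observation makes (i) automatic, so (iii) $\Rightarrow$ (i) is immediate. For (i) $\Rightarrow$ (ii), I would let $\mc{Z} := \mc{Y}\n \mc{A}^*$; a short calculation of the same $\mc{U}\n \mc{U}^* = \mc{O}$ flavour as in (iii) $\Rightarrow$ (ii) shows that $\mc{Z}^* = \mc{A}\n \mc{Y}^* \in \mc{A}^*\{1\}$, whereupon (i) delivers $\mc{Z} \in \mc{A}\{1\}$, which unfolds to (ii). The main obstacle is the tensor-level implication $\mc{V}^*\n \mc{V} = \mc{O} \Rightarrow \mc{V} = \mc{O}$, which I would settle by noting that each diagonal entry of $\mc{V}^*\n \mc{V}$ unfolds under the Einstein product into $\sum_{i_1,\ldots,i_N} \abs{\mc{V}_{i_1\cdots i_N j_1\cdots j_N}}^2$; this positivity fact deserves explicit verification since it is not isolated as a lemma in the excerpt.
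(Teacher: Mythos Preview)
Your argument is correct, and the core device---forming an auxiliary tensor $\mc{V}$ and killing it via $\mc{V}^*\n\mc{V}=\mc{O}$---is exactly the engine the paper uses. The routes differ in structure, though. The paper does not prove implications among (i), (ii), (iii) at all; it simply proves that each of the three statements holds unconditionally (so the ``equivalence'' is trivial because all three are always true). For (i), it observes, as you do, that conjugation gives a bijection between $\mc{A}\{1\}$ and $\mc{A}^*\{1\}$. For (ii), it sets $\mc{R}=(\mc{I}-\mc{A}\n\mc{Y}\n\mc{A}^*)\n\mc{A}$ and computes $\mc{R}^*\n\mc{R}=\mc{O}$ directly from the defining relation for $\mc{Y}$, without first passing to $\mc{Y}^*$. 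For (iii), it symmetrizes $\mc{Y}$ to a Hermitian $\{1\}$-inverse $\mc{S}=(\mc{Y}+\mc{Y}^*)/2$ of $\mc{A}^*\n\mc{A}$ and shows $\mc{A}\n\mc{Y}\n\mc{A}^*=\mc{A}\n\mc{S}\n\mc{A}^*$ via the same $\mc{G}^*\n\mc{G}=\mc{O}$ trick.

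Your cycle (ii) $\Rightarrow$ (iii) $\Rightarrow$ (ii) is a neat alternative, and your (ii) $\Rightarrow$ (iii) via substituting the conjugated identity for the trailing $\mc{A}^*$ is arguably cleaner than the paper's symmetrization. Your (iii) $\Rightarrow$ (ii), however, is slightly roundabout: the computation you do to show $\mc{A}\n\mc{Y}^*\n\mc{A}^*\n\mc{A}=\mc{A}$ nowhere uses (iii) and works verbatim with $\mc{Y}$ in place of $\mc{Y}^*$ (since $\mc{Y}$ itself is a $\{1\}$-inverse of $\mc{A}^*\n\mc{A}$), which is precisely the paper's direct proof of (ii). Similarly, your ``(i) $\Rightarrow$ (ii)'' step is logically sound but vacuous as an implication: the calculation showing $\mc{Z}^*\in\mc{A}^*\{1\}$ already yields (ii) by conjugation, and the appeal to (i) is to a fact you have noted is always true. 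None of this is wrong; it just means your proof does a little more bookkeeping than the paper's flat ``prove each one'' approach. Your explicit verification that $\mc{V}^*\n\mc{V}=\mc{O}$ forces $\mc{V}=\mc{O}$ via positivity of the diagonal Einstein sums is a genuine improvement over the paper, which uses this implication without comment.
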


\begin{proof}
(i) The condition $\mc{A} \n \mc{A}^{(1)}\n \mc{A}= \mc{A}$ implies
$\mc{A}^* \n (\mc{A}^{(1)})^*\n \mc{A}^*= \mc{A}^*$. Hence
$(\mc{A}\{1\})^*\subseteq  \mc{A}^*\{1\}$. The other implication
follows from $(\mc{A}^*\{1\})^*\subseteq  \mc{A}\{1\}$ because of
$\mc{A}^* \n (\mc{A}^*)^{(1)}\n \mc{A}^*= \mc{A}^*$.

(ii)  Let $\mc{R}=(\mc{I}- \mc{A} \n (\mc{A}^*\n \mc{A})^{(1)}\n
\mc{A}^*)\n  \mc{A}= \mc{A}*_N(\mc{I}-  (\mc{A}^*\n \mc{A})^{(1)}\n
\mc{A}^*\n  \mc{A}).$ Then
\begin{eqnarray*}
&& \mc{R}^* \n \mc{R}\\
&=& (\mc{I}-  (\mc{A}^*\n \mc{A})^{(1)}\n \mc{A}^*\n \mc{A})^*\n
(\mc{A}^*\n  \mc{A}- \mc{A}^*\n \mc{A} \n (\mc{A}^*\n
\mc{A})^{(1)}\n \mc{A}^*\n \mc{A})\\
&=& \mc{O}.
\end{eqnarray*}

We thus have   $\mc{R}= \mc{O}$ which implies the desired result.

(iii) Let $\mc{R}$ be a $\{1\}$-inverse of $\mc{A}^*\n \mc{A}.$
Then, by (i), we  have $\mc{R}^*$ as a $\{1\}$-inverse of
$\mc{A}^*\n \mc{A}$. Again, setting $\mc{S}=(\mc{R}+\mc{R}^*)/2$, we
get $\mc{S}$ as $\{1\}$-inverse of $\mc{A}^*\n \mc{A}$, and also
$\mc{S}$ is hermitian. Let  $\mc{G}= \mc{A}\n \mc{S}\n \mc{A}^*-
\mc{A} \n (\mc{A}^*\n \mc{A})^{(1)}\n \mc{A}^*.$ Then
\begin{eqnarray*}
&& \mc{G}^*\n \mc{G} \\
&=& (\mc{A}\n \mc{S}- \mc{A} \n (\mc{A}^*\n \mc{A})^{(1)})^*\n
(\mc{A}^*\n \mc{A}\n \mc{S}\n \mc{A}^* - \mc{A}^*\n \mc{A}\n (\mc{A}^*\n \mc{A})^{(1)}\n \mc{A}^*)\\
&=& \mc{O}.
\end{eqnarray*}
Hence $\mc{G}= \mc{O}$ which leads to (iii).

\end{proof}

The matrix version of the above result is obtained below for $A \in
\mathbb{C}^{m\times n }$.
\begin{cor}
The following three conditions are equivalent:\\
 (i) $ {A^{(1)}}^* \{1\}= (A^*)^{(1)}\{1\}$.\\
 (ii) $A(A^* A)^{(1)} A^* A =A$.\\
 (iii) $(A(A^* A)^{(1)} A^*)^*=A  (A^*A)^{(1)}A^*.$
\end{cor}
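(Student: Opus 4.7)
The plan is to recognize this statement as the direct $N=1$ specialization of the preceding theorem. In the Einstein product framework, taking $N=1$ with $I_1=m$ and $J_1=n$ reduces every tensor $\mc{A}\in\mathbb{C}^{I_1\times J_1}$ to an ordinary matrix, the operation $*_N$ collapses to standard matrix multiplication, the tensor conjugate transpose $\mc{A}^*$ becomes the usual $A^*$, and the $\{1\}$-inverse class of a tensor reduces verbatim to the classical $\{1\}$-inverse class of a matrix. Under this identification, conditions (i)--(iii) of the corollary are identical on the nose to conditions (i)--(iii) of the theorem. Hence the corollary is simply a one-line consequence: \emph{apply the preceding theorem with $N=1$.}

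If one prefers an independent self-contained argument, I would repeat the three steps of the theorem's proof in matrix notation. For (i), note that $A A^{(1)} A = A$ yields $A^* (A^{(1)})^* A^* = A^*$, giving the inclusion $(A\{1\})^* \subseteq A^*\{1\}$; the reverse inclusion follows symmetrically from $A^*(A^*)^{(1)}A^* = A^*$. For the implication toward (ii), set $R = \bigl(I - A(A^*A)^{(1)}A^*\bigr)A$ and compute
\begin{equation*}
R^*R = \bigl(I - (A^*A)^{(1)} A^*A\bigr)^* \bigl(A^*A - A^*A(A^*A)^{(1)} A^*A\bigr) = 0,
\end{equation*}
so $R=0$. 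For (iii), given any $R \in (A^*A)\{1\}$, the first equivalence supplies $R^* \in (A^*A)\{1\}$, so $S := (R+R^*)/2$ is a hermitian $\{1\}$-inverse of $A^*A$; then
\begin{equation*}
G := A S A^* - A(A^*A)^{(1)}A^*
\end{equation*}
satisfies $G^*G = 0$ by the same kind of telescoping, hence $G=0$ and $A(A^*A)^{(1)}A^* = ASA^*$ is hermitian.

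There is no genuine obstacle here: the only substantive work is already carried out in the tensor theorem, and translating its proof to matrices is purely notational, since the matrix case is literally the $N=1$ slice of the Einstein-product machinery. The main ``obstacle,'' if any, is purely expository---deciding whether to state the corollary as an immediate reduction or to rewrite the proof in matrix notation for reader convenience.
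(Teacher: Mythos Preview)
Your proposal is correct and matches the paper's treatment exactly: the corollary is stated immediately after the tensor theorem as its matrix version, with no separate proof given, so recognizing it as the $N=1$ specialization is precisely the intended justification. Your optional self-contained rewrite also faithfully mirrors the three steps of the theorem's proof.
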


Next result collects sufficient conditions for
$(\mc{A}+\mc{B})^{(1)}$ to be a $\{ 1 \}$-inverse of $\mc{B}$.

\begin{thm}
Let $\mc{A}, ~ \mc{B} \in \mathbb{C}^{I_1\times\cdots\times I_N
\times J_1 \times\cdots\times J_N }$. If  $\mc{A}^{(1)}\n
\mc{B}=\mc{B}^{(1)}\n \mc{A}=\mc{O}$, then $(\mc{A}+\mc{B})^{(1)}$
is a $\{1\}$-inverse of $\mc{B}$.
\end{thm}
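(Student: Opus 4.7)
The goal is to show that every $\mc{X}$ satisfying $(\mc{A}+\mc{B})\n\mc{X}\n(\mc{A}+\mc{B}) = \mc{A}+\mc{B}$ also satisfies $\mc{B}\n\mc{X}\n\mc{B} = \mc{B}$. My plan is to peel off the $\mc{A}$-part of $\mc{A}+\mc{B}$ on both sides of the defining identity, using each orthogonality hypothesis as a left-annihilator.

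First I would exploit $\mc{B}^{(1)}\n\mc{A}=\mc{O}$. Left-multiplying the defining equation by $\mc{B}^{(1)}$ collapses $\mc{B}^{(1)}\n(\mc{A}+\mc{B})$ to $\mc{B}^{(1)}\n\mc{B}$; a second left-multiplication by $\mc{B}$ together with the absorption $\mc{B}\n\mc{B}^{(1)}\n\mc{B}=\mc{B}$ yields
\[
\mc{B}\n\mc{X}\n(\mc{A}+\mc{B}) \;=\; \mc{B}, \qquad\text{i.e.,}\qquad \mc{B}\n\mc{X}\n\mc{A} \;+\; \mc{B}\n\mc{X}\n\mc{B} \;=\; \mc{B}. \quad (\star)
\]
Running the same maneuver with $\mc{A}^{(1)}\n\mc{B}=\mc{O}$ in place of the other orthogonality gives the companion identity $\mc{A}\n\mc{X}\n(\mc{A}+\mc{B}) = \mc{A}$. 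By $(\star)$ it now suffices to establish the cross-term vanishing $\mc{B}\n\mc{X}\n\mc{A} = \mc{O}$.

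To produce this cancellation I would set up the right-sided mirror: right-multiply the defining equation by $\mc{A}^{(1)}\n\mc{A}$, which is idempotent because $\mc{A}\n\mc{A}^{(1)}\n\mc{A}=\mc{A}$. The rightmost $\mc{A}$-factor is preserved, so the equation becomes
\[
(\mc{A}+\mc{B})\n\mc{X}\n\mc{A} \;+\; (\mc{A}+\mc{B})\n\mc{X}\n\mc{B}\n\mc{A}^{(1)}\n\mc{A} \;=\; (\mc{A}+\mc{B})\n\mc{A}^{(1)}\n\mc{A},
\]
and the hypothesis $\mc{A}^{(1)}\n\mc{B}=\mc{O}$ is intended to wipe out the stubborn $\mc{B}$-residue in the middle term after re-applying the left-absorption trick (feeding the resulting expression back through $\mc{B}^{(1)}$ as in Step~1). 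Comparing this with $(\star)$ and its $\mc{A}$-analog should pin down $\mc{B}\n\mc{X}\n\mc{A}=\mc{O}$, after which $(\star)$ closes the argument.

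The main obstacle is precisely this right-sided step. Both hypotheses are one-sided left-annihilation identities, so the mirror cancellation does not fall out of a single multiplication; it requires carefully interleaving the idempotents $\mc{A}^{(1)}\n\mc{A}$ and $\mc{B}\n\mc{B}^{(1)}$ with $\mc{A}^{(1)}\n\mc{B}=\mc{O}$ so that only $\mc{B}\n\mc{X}\n\mc{A}$ survives once the dust settles. This bookkeeping is where I expect the nontrivial content of the proof to lie, and it is what separates the present claim from the easier observation that $\mc{A}^{(1)}+\mc{B}^{(1)}\in\mc{B}\{1\}$.
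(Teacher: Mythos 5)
Your Step 1 is sound and is in essence one half of the paper's argument: from $\mc{B}^{(1)}\n\mc{A}=\mc{O}$ you get $\mc{B}=\mc{B}\n\mc{B}^{(1)}\n(\mc{A}+\mc{B})$, hence $\mc{B}\n\mc{X}\n(\mc{A}+\mc{B})=\mc{B}$ for every $\mc{X}\in(\mc{A}+\mc{B})\{1\}$ (the paper instead uses $\mc{A}^{(1)}\n\mc{B}=\mc{O}$ to write $\mc{B}=(\mc{I}-\mc{A}\n\mc{A}^{(1)})\n(\mc{A}+\mc{B})$, but either left factorization does the job). The problem is that your proposal stops exactly at the decisive point: you never establish $\mc{B}\n\mc{X}\n\mc{A}=\mc{O}$, you only sketch a manipulation that you hope will yield it, and you concede as much in your closing paragraph. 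What is needed there is a factorization of $\mc{B}$ through $\mc{A}+\mc{B}$ on the \emph{right}, $\mc{B}=(\mc{A}+\mc{B})\n\mc{Q}$; the paper supplies this by asserting $\mc{B}=(\mc{A}+\mc{B})\n(\mc{I}-\mc{A}^{(1)}\n\mc{A})$, an identity equivalent to $\mc{B}\n\mc{A}^{(1)}\n\mc{A}=\mc{O}$, which does not follow from $\mc{B}^{(1)}\n\mc{A}=\mc{O}$.

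In fact the missing step cannot be filled from the stated hypotheses, because the claim (for an arbitrary $(\mc{A}+\mc{B})^{(1)}$) is false. Take $N=1$, so the tensors are ordinary $2\times2$ matrices: let $A=\left(\begin{smallmatrix}1&0\\0&0\end{smallmatrix}\right)$, $B=\left(\begin{smallmatrix}0&0\\1&0\end{smallmatrix}\right)$, $A^{(1)}=\left(\begin{smallmatrix}1&0\\0&0\end{smallmatrix}\right)$, $B^{(1)}=\left(\begin{smallmatrix}0&1\\0&0\end{smallmatrix}\right)$. Then $A^{(1)}B=B^{(1)}A=O$, and $X=\left(\begin{smallmatrix}1&0\\0&0\end{smallmatrix}\right)$ satisfies $(A+B)X(A+B)=A+B$, yet $BXB=O\neq B$. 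Both hypotheses are left-annihilation conditions, so they can only deliver left factorizations of $\mc{A}$ and $\mc{B}$ through $\mc{A}+\mc{B}$; your instinct that the right-sided cancellation is where the nontrivial content lies is exactly right, and it is also where the paper's own proof (and the theorem itself) breaks down. The statement becomes true, and your outline closes via the sandwich $\mc{B}\n\mc{X}\n\mc{B}=\mc{B}\n\mc{X}\n(\mc{A}+\mc{B})\n(\mc{I}-\mc{A}^{(1)}\n\mc{A})=\mc{B}\n(\mc{I}-\mc{A}^{(1)}\n\mc{A})=\mc{B}$, only if one adds a right-sided hypothesis such as $\mc{B}\n\mc{A}^{(1)}=\mc{O}$.
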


\begin{proof}
The condition $\mc{A}^{(1)}\n \mc{B}=\mc{O}$ yields
$\mc{B}=\mc{A}-\mc{A}\n \mc{A}^{(1)}\n \mc{A}+\mc{B}-\mc{A}\n
\mc{A}^{(1)}\n \mc{B}=(\mc{I}-\mc{A}\n \mc{A}^{(1)})\n
(\mc{A}+\mc{B})$. Using the other condition, one can  have
$\mc{B}=(\mc{A}+\mc{B})\n (\mc{I}-\mc{A}^{(1)}\n \mc{A})$. Then
\begin{eqnarray*}
 &&\mc{B}\n (\mc{A}+\mc{B})^{(1)} \n \mc{B}\\
 &=& (\mc{I}-\mc{A}\n
 \mc{A}^{(1)})\n (\mc{A}+\mc{B})\n (\mc{A}+\mc{B})^{(1)}\n
(\mc{A}+\mc{B})\n  (\mc{I}-\mc{A}^{(1)}\n \mc{A})\\
 &=& (\mc{I}-\mc{A}\n
\mc{A}^{(1)})\n (\mc{A}+\mc{B})\n (\mc{I}-\mc{A}^{(1)}\n
\mc{A})\\
&=&  (\mc{I}-\mc{A}\n \mc{A}^{(1)})\n \mc{B}\\
 &=& \mc{B}- \mc{A}\n
\mc{A}^{(1)} \n \mc{B}= \mc{B}.
\end{eqnarray*}
\end{proof}

The following one is obtained as  a corollary for matrices.
\begin{cor}
Let $A, ~ B \in \mathbb{C}^{m\times n }$. If  $A^{(1)}B=B^{(1)}A=O$, then $(A+B)^{(1)}$
is a $\{1\}$-inverse of $B$.
\end{cor}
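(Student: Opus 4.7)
The plan is to obtain the corollary as the $N=1$ specialization of the preceding theorem, since a matrix is exactly an order-two tensor and the Einstein product $*_N$ with $N=1$ coincides with ordinary matrix multiplication (with $\mc{I}$ becoming the appropriate identity matrix). In principle one can simply cite the theorem, but since the proof there is short and purely algebraic, I would reproduce the same three-line computation in matrix notation to keep the paper self-contained.

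First, I would exploit the hypothesis $A^{(1)}B = O$ to rewrite $B$ in a form that factors out the idempotent $I - AA^{(1)}$: starting from $A = AA^{(1)}A$, we get $B = A - AA^{(1)}A + B - AA^{(1)}B = (I - AA^{(1)})(A+B)$. Symmetrically, the hypothesis $B^{(1)}A = O$ gives, via $A = AA^{(1)}A$ applied on the right side, $B = (A+B)(I - A^{(1)}A)$. These two identities are the only real content.

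Next I would compute $B(A+B)^{(1)}B$ by substituting the first identity on the left copy of $B$ and the second identity on the right copy of $B$. This gives
\begin{equation*}
B(A+B)^{(1)}B = (I - AA^{(1)})(A+B)(A+B)^{(1)}(A+B)(I - A^{(1)}A).
\end{equation*}
Applying the $\{1\}$-inverse property $(A+B)(A+B)^{(1)}(A+B) = A+B$ collapses the middle to
\begin{equation*}
(I - AA^{(1)})(A+B)(I - A^{(1)}A) = (I - AA^{(1)})B,
\end{equation*}
where I used the second identity once more in the right factor. Expanding the remaining product and invoking $A^{(1)}B = O$ yields $B - AA^{(1)}B = B$, which is the desired $\{1\}$-inverse relation for $B$.

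There is no real obstacle here; the only slightly non-obvious move is the two-sided rewriting of $B$ in terms of $A+B$ through the idempotents $I - AA^{(1)}$ and $I - A^{(1)}A$, exactly as in the tensor proof above.
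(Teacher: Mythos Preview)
Your approach mirrors the paper's exactly: the corollary is simply the matrix ($N=1$) case of the preceding tensor theorem, and you reproduce that proof line by line in matrix notation.

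There is, however, a genuine gap in the second factorization. You claim that $B^{(1)}A = O$ gives $B = (A+B)(I - A^{(1)}A)$, but expanding the right-hand side yields $A - AA^{(1)}A + B - BA^{(1)}A = B - BA^{(1)}A$, so what is actually needed is $BA^{(1)}A = O$. The hypothesis $B^{(1)}A = O$ does not imply this; the true ``symmetric'' counterpart of the first identity would require the right-side annihilation $BA^{(1)} = O$, which is not among the assumptions. (The tensor proof you are transcribing contains the same unjustified step.) In fact the assertion fails in general: take $A = \left(\begin{smallmatrix} 1 & 0 \\ 0 & 0 \end{smallmatrix}\right)$, $B = \left(\begin{smallmatrix} 0 & 0 \\ 1 & 0 \end{smallmatrix}\right)$, $A^{(1)} = A$, $B^{(1)} = B^{T}$; then $A^{(1)}B = B^{(1)}A = O$, yet $X = \left(\begin{smallmatrix} 1 & 0 \\ 0 & 0 \end{smallmatrix}\right)$ is a $\{1\}$-inverse of $A+B = \left(\begin{smallmatrix} 1 & 0 \\ 1 & 0 \end{smallmatrix}\right)$ with $BXB = O \neq B$.
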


Observe that $\mc{B}= \mc{A}\n \mc{A}^{(1)}\n \mc{B}$ implies
$\mc{B}= \mc{A}\n \mc{H}$ where $\mc{H}=\mc{A}^{(1)}\n \mc{B}$.
Conversely, if $\mc{B}= \mc{A}\n \mc{H}$, then premultiplying
$\mc{A}\n \mc{A}^{(1)}$, we have
 $$\mc{A}\n \mc{A}^{(1)}\n
\mc{B}=\mc{A}\n \mc{A}^{(1)}\n \mc{A} \n \mc{H}= \mc{A} \n
\mc{H}=\mc{B}.$$ Hence, in this case, $\{1\}$-inverse of a tensor
behaves like ordinary inverse of a tensor which is stated in the
next result.

\begin{thm}
Let $\mc{A},  \mc{B} \in \mathbb{C}^{I_1\times\cdots\times I_N
\times J_1 \times\cdots\times J_N }$. Then $\mc{B}= \mc{A}\n
\mc{A}^{(1)}\n \mc{B}$ if and only if $\mc{B}= \mc{A}\n \mc{H}$ for
some $\mc{H}.$ Similarly, $\mc{B}= \mc{B}\n \mc{A}^{(1)} \n \mc{A}$
if and only if $\mc{B}=  \mc{G}\n \mc{A}$ for some $\mc{G}.$
\end{thm}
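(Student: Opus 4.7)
The plan is to deduce both equivalences as essentially one-line consequences of the defining property of a $\{1\}$-inverse, namely $\mc{A} \n \mc{A}^{(1)} \n \mc{A} = \mc{A}$. In fact, the paragraph immediately preceding the theorem already carries out the argument for the first statement, and the second is completely symmetric. So my proof will be very short and will simply package those observations into a clean forward/backward dichotomy.

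For the forward direction of the first equivalence, I would take the hypothesis $\mc{B} = \mc{A} \n \mc{A}^{(1)} \n \mc{B}$ and exhibit the witness $\mc{H} := \mc{A}^{(1)} \n \mc{B}$ directly, which is a tensor of the correct shape $J_1 \times \cdots \times J_N \times J_1 \times \cdots \times J_N$ so that $\mc{A} \n \mc{H}$ is well-defined and equal to $\mc{B}$. For the converse, given $\mc{B} = \mc{A} \n \mc{H}$, I would premultiply by $\mc{A} \n \mc{A}^{(1)}$ and collapse using the $\{1\}$-inverse equation:
\[
\mc{A} \n \mc{A}^{(1)} \n \mc{B} \;=\; \mc{A} \n \mc{A}^{(1)} \n \mc{A} \n \mc{H} \;=\; \mc{A} \n \mc{H} \;=\; \mc{B}.
\]

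For the second statement the argument is completely analogous with left and right interchanged: in the forward direction I would set $\mc{G} := \mc{B} \n \mc{A}^{(1)}$, and in the converse direction I would postmultiply $\mc{B} = \mc{G} \n \mc{A}$ by $\mc{A}^{(1)} \n \mc{A}$ and again invoke $\mc{A} \n \mc{A}^{(1)} \n \mc{A} = \mc{A}$ to recover $\mc{B}$.

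There is no genuine obstacle here; the only thing worth confirming is that the shapes of the auxiliary tensors $\mc{H}$ and $\mc{G}$ are compatible with the Einstein product, which is automatic from the stated shapes of $\mc{A}$, $\mc{A}^{(1)}$, and $\mc{B}$. In effect this theorem just formalises the remark that precedes it, and I would present it in three or four lines per direction.
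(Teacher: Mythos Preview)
Your proposal is correct and mirrors exactly the argument the paper gives in the paragraph preceding the theorem: forward by setting $\mc{H}=\mc{A}^{(1)}\n\mc{B}$ (respectively $\mc{G}=\mc{B}\n\mc{A}^{(1)}$), and converse by pre- (respectively post-) multiplying by $\mc{A}\n\mc{A}^{(1)}$ and collapsing via $\mc{A}\n\mc{A}^{(1)}\n\mc{A}=\mc{A}$.
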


An immediate consequence of the above result is shown next as a
corollary.

\begin{cor}
Let $\mc{A} \in \mathbb{C}^{I_1\times\cdots\times I_N \times J_1
\times\cdots\times J_N }$. Then\\
(i) $\mc{A}= \mc{A}\n (\mc{A^*}\n \mc{A}) ^{(1)}\n (\mc{A^*}\n \mc{A})$ and \\
(ii) $\mc{A^*}=  (\mc{A^*}\n \mc{A}) \n (\mc{A^*}\n \mc{A})^{(1)} \n
\mc{A^*}$.
\end{cor}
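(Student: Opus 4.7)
The plan is to derive this corollary as a direct application of the preceding theorem, which asserts that $\mc{B}=\mc{A}*_N\mc{A}^{(1)}*_N\mc{B}$ iff $\mc{B}=\mc{A}*_N\mc{H}$ for some $\mc{H}$, and dually $\mc{B}=\mc{B}*_N\mc{A}^{(1)}*_N\mc{A}$ iff $\mc{B}=\mc{G}*_N\mc{A}$ for some $\mc{G}$. The only substantive work is exhibiting, in each case, a suitable factorization so that the corresponding direction of the theorem applies.

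For part (i), I would first invoke Lemma \ref{lemm11}(c), which gives $\mc{A}=(\mc{A}^*)^{\dg}*_N\mc{A}^**_N\mc{A}=(\mc{A}^*)^{\dg}*_N(\mc{A}^**_N\mc{A})$. Thus $\mc{A}$ is expressible as $\mc{G}*_N(\mc{A}^**_N\mc{A})$ with $\mc{G}=(\mc{A}^*)^{\dg}$. Applying the "dual" direction of the preceding theorem with the role of $\mc{A}$ played by $\mc{A}^**_N\mc{A}$ and the role of $\mc{B}$ played by $\mc{A}$, we immediately obtain
\[
\mc{A}=\mc{A}*_N(\mc{A}^**_N\mc{A})^{(1)}*_N(\mc{A}^**_N\mc{A}).
\]

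For part (ii), the symmetric argument proceeds via Lemma \ref{lemm11}(b), which gives $\mc{A}^*=\mc{A}^**_N\mc{A}*_N\mc{A}^{\dg}=(\mc{A}^**_N\mc{A})*_N\mc{A}^{\dg}$. Hence $\mc{A}^*$ has the form $(\mc{A}^**_N\mc{A})*_N\mc{H}$ with $\mc{H}=\mc{A}^{\dg}$. The first direction of the preceding theorem, again with $\mc{A}^**_N\mc{A}$ playing the role of $\mc{A}$ and $\mc{A}^*$ playing the role of $\mc{B}$, then yields
\[
\mc{A}^*=(\mc{A}^**_N\mc{A})*_N(\mc{A}^**_N\mc{A})^{(1)}*_N\mc{A}^*.
\]

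There is no real obstacle here; the entire content of the argument lies in recognizing which piece of Lemma \ref{lemm11} provides the factorization that lets the preceding theorem fire. The delicate point, if any, is just keeping the order of multiplication straight, since Einstein-product multiplication is noncommutative, so one must apply the left-handed form of the theorem for (i) and the right-handed form for (ii).
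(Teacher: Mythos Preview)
Your proposal is correct and follows the paper's intended route: the paper presents this corollary as an immediate consequence of the preceding theorem (the $\mc{B}=\mc{A}*_N\mc{H}$ characterization), and you have correctly supplied the needed factorizations via Lemma~\ref{lemm11}(b) and (c). The paper gives no further detail, so your argument is exactly the kind of verification it leaves to the reader.
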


Another characterization of   $\{1\}$-inverse of a tensor is
presented below.

\begin{thm}
If $\mc{S}$ and $\mc{T}$ are two invertible tensors, and $\mc{G}$ is
a $\{1\}$-inverse of $\mc{A}$, then
$\mc{T}^{-1}*_N\mc{G}*_N\mc{S}^{-1}$ is a $\{1\}$-inverse of
 $\mc{B} = \mc{S}*_N\mc{A}*_N\mc{T}$. Moreover, every $\{1\}$-inverse of $\mc{B}$ is of this form.
\end{thm}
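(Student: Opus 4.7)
The plan is to establish both a containment and a reverse containment. For the forward direction, I would simply verify that the tensor $\mc{T}^{-1}*_N\mc{G}*_N\mc{S}^{-1}$ satisfies the defining equation $(1)$ of Definition \ref{defmpi} with respect to $\mc{B}$. Using associativity of the Einstein product together with the cancellations $\mc{T}*_N\mc{T}^{-1}=\mc{I}$ and $\mc{S}^{-1}*_N\mc{S}=\mc{I}$, the triple product
\begin{equation*}
\mc{B}*_N(\mc{T}^{-1}*_N\mc{G}*_N\mc{S}^{-1})*_N\mc{B}
= \mc{S}*_N\mc{A}*_N\mc{T}*_N\mc{T}^{-1}*_N\mc{G}*_N\mc{S}^{-1}*_N\mc{S}*_N\mc{A}*_N\mc{T}
\end{equation*}
collapses to $\mc{S}*_N(\mc{A}*_N\mc{G}*_N\mc{A})*_N\mc{T} = \mc{S}*_N\mc{A}*_N\mc{T} = \mc{B}$, since $\mc{G}\in\mc{A}\{1\}$.

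For the converse, I would take an arbitrary $\mc{H}\in\mc{B}\{1\}$ and produce a preimage. The natural candidate is to define $\mc{G}=\mc{T}*_N\mc{H}*_N\mc{S}$, so that by construction $\mc{T}^{-1}*_N\mc{G}*_N\mc{S}^{-1}=\mc{H}$. The remaining task is to show that this $\mc{G}$ is indeed a $\{1\}$-inverse of $\mc{A}$. Writing $\mc{A}=\mc{S}^{-1}*_N\mc{B}*_N\mc{T}^{-1}$ (obtained by inverting the definition of $\mc{B}$) and substituting, a similar telescoping computation gives
\begin{equation*}
\mc{A}*_N\mc{G}*_N\mc{A}
=\mc{S}^{-1}*_N\mc{B}*_N(\mc{T}^{-1}*_N\mc{T})*_N\mc{H}*_N(\mc{S}*_N\mc{S}^{-1})*_N\mc{B}*_N\mc{T}^{-1}
=\mc{S}^{-1}*_N(\mc{B}*_N\mc{H}*_N\mc{B})*_N\mc{T}^{-1}=\mc{A}.
\end{equation*}

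Since the argument is purely algebraic and relies only on associativity of $*_N$ and the invertibility of $\mc{S}$ and $\mc{T}$, no genuine obstacle arises; the main thing to be careful about is keeping the orders of multiplication straight, as the Einstein product is non-commutative. Once both directions are verified, the conclusion follows: the map $\mc{G}\mapsto \mc{T}^{-1}*_N\mc{G}*_N\mc{S}^{-1}$ is a bijection between $\mc{A}\{1\}$ and $\mc{B}\{1\}$, establishing both the construction and the exhaustiveness claim.
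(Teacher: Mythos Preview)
Your proof is correct and follows essentially the same approach as the paper: both directions are handled by direct verification of condition~(1), using associativity and the cancellations $\mc{T}*_N\mc{T}^{-1}=\mc{I}$, $\mc{S}^{-1}*_N\mc{S}=\mc{I}$, and for the converse defining $\mc{G}=\mc{T}*_N\mc{H}*_N\mc{S}$ (the paper uses $\mc{K}$ for your $\mc{H}$) and checking $\mc{G}\in\mc{A}\{1\}$ via $\mc{A}=\mc{S}^{-1}*_N\mc{B}*_N\mc{T}^{-1}$. If anything, your write-up is slightly cleaner than the paper's, which contains a typo in its definition of $\mc{G}$ in the converse direction.
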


\begin{proof}
We have $\mc{T}^{-1}*_N\mc{G}*_N\mc{S}^{-1}\in \mc{B}\{1\}$ since
$\mc{B}*_N(\mc{T}^{-1}*_N\mc{G}*_N\mc{S}^{-1})*_N\mc{B}=
\mc{S}*_N\mc{A}*_N\mc{T}*_N(\mc{T}^{-1}*_N\mc{G}*_N\mc{S}^{-1})*_N\mc{S}*_N\mc{A}*_N\mc{T}
=\mc{S}*_N\mc{A}*_N\mc{G}*_N\mc{A}*_N\mc{T}= \mc{S}*_N\mc{A}*_N
\mc{T} = \mc{B}$. Again, let $\mc{K}$ be any $\{1\}$-inverse of
$\mc{B}$. We also have
$\mc{S}^{-1}*_N\mc{B}*_N\mc{T}^{-1}*_N(\mc{T}*_N\mc{K}*_N\mc{S})*_N\mc{S}^{-1}*_N\mc{B}*_N
\mc{T}^{-1} = \mc{S}^{-1}*_N\mc{B}*_N\mc{K}*_N\mc{B}*_N\mc{T}^{-1} =
\mc{S}^{-1}*_N\mc{B}*_N\mc{T}^{-1}$. But
$\mc{S}^{-1}*_N\mc{B}*_N\mc{T}^{-1} = \mc{A}$ as $\mc{B}
=\mc{S}*_N\mc{A}*_N\mc{T}$. Considering $\mc{G}
=\mc{S}*_N\mc{K}*_N\mc{T}$, we get
$\mc{A}*_N\mc{G}*_N\mc{A}=\mc{A}*_N(\mc{S}*_N\mc{K}*_N\mc{T})*_N\mc{A}=
\mc{A}*_N\mc{S}*_N\mc{S}^{-1}\mc{G}*_N\mc{T}^{-1}*_N\mc{A}=
\mc{A}*_N\mc{G}*_N\mc{A} = \mc{A}$, i.e., $\{1\}$-inverse of
$\mc{A}$, and $\mc{K} =\mc{T}^{-1}*_N\mc{G}*_N\mc{S}^{-1}$.
\end{proof}

Reverse order law for $\{1\}$-inverse of tensors is shown next.

\begin{thm}
$(\mc{A}\n \mc{B})^{(1)}= \mc{B}^{(1)} \n \mc{A}^{(1)}$ if and only
if $$(\mc{A}^{(1)}\n \mc{A}\n \mc{B} \n
\mc{B}^{(1)})^2=\mc{A}^{(1)}\n \mc{A}\n \mc{B} \n \mc{B}^{(1)}.$$
\end{thm}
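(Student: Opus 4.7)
The plan is to read the equation $(\mathcal{A}*_N\mathcal{B})^{(1)}=\mathcal{B}^{(1)}*_N\mathcal{A}^{(1)}$ as the assertion that $\mathcal{B}^{(1)}*_N\mathcal{A}^{(1)}$ lies in $(\mathcal{A}*_N\mathcal{B})\{1\}$, i.e.\ as the single identity
\[
\mathcal{A}*_N\mathcal{B}*_N\mathcal{B}^{(1)}*_N\mathcal{A}^{(1)}*_N\mathcal{A}*_N\mathcal{B} \;=\; \mathcal{A}*_N\mathcal{B}. \tag{$\ast$}
\]
With $(\ast)$ written out, the equivalence is just a two-line manipulation, and I expect no essential obstacle; the proof is essentially a one-shot pre- and post-multiplication argument using only the defining identities $\mathcal{A}*_N\mathcal{A}^{(1)}*_N\mathcal{A}=\mathcal{A}$ and $\mathcal{B}*_N\mathcal{B}^{(1)}*_N\mathcal{B}=\mathcal{B}$.

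For the forward direction, assume $(\ast)$. Pre-multiply both sides by $\mathcal{A}^{(1)}$ and post-multiply by $\mathcal{B}^{(1)}$. The left-hand side becomes
\[
\mathcal{A}^{(1)}*_N\mathcal{A}*_N\mathcal{B}*_N\mathcal{B}^{(1)}*_N\mathcal{A}^{(1)}*_N\mathcal{A}*_N\mathcal{B}*_N\mathcal{B}^{(1)},
\]
while the right-hand side becomes $\mathcal{A}^{(1)}*_N\mathcal{A}*_N\mathcal{B}*_N\mathcal{B}^{(1)}$. This is precisely $(\mathcal{A}^{(1)}*_N\mathcal{A}*_N\mathcal{B}*_N\mathcal{B}^{(1)})^2=\mathcal{A}^{(1)}*_N\mathcal{A}*_N\mathcal{B}*_N\mathcal{B}^{(1)}$.

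For the reverse direction, assume the idempotency condition and do the opposite: pre-multiply by $\mathcal{A}$ and post-multiply by $\mathcal{B}$. The right-hand side collapses using $\mathcal{A}*_N\mathcal{A}^{(1)}*_N\mathcal{A}=\mathcal{A}$ and $\mathcal{B}*_N\mathcal{B}^{(1)}*_N\mathcal{B}=\mathcal{B}$ to exactly $\mathcal{A}*_N\mathcal{B}$, and the left-hand side collapses in the outer two factors the same way, leaving $\mathcal{A}*_N\mathcal{B}*_N\mathcal{B}^{(1)}*_N\mathcal{A}^{(1)}*_N\mathcal{A}*_N\mathcal{B}$. Equating these recovers $(\ast)$.

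The only point that merits care is the interpretation of the statement: since $\{1\}$-inverses are not unique, the equation has to be read as ``$\mathcal{B}^{(1)}*_N\mathcal{A}^{(1)}\in(\mathcal{A}*_N\mathcal{B})\{1\}$'' (and correspondingly, the identity should hold for some fixed choices of $\mathcal{A}^{(1)}$ and $\mathcal{B}^{(1)}$ on both sides). Once this convention is fixed, the proof is symmetric: condition $(\ast)$ and the idempotency condition sandwich each other via multiplication by $\mathcal{A}^{(1)},\mathcal{B}^{(1)}$ on one hand and by $\mathcal{A},\mathcal{B}$ on the other, each direction using only one of the two $\{1\}$-inverse identities on each factor.
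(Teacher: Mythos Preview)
Your proof is correct and your forward direction is identical to the paper's: rewrite the hypothesis as $(\ast)$, then pre-multiply by $\mathcal{A}^{(1)}$ and post-multiply by $\mathcal{B}^{(1)}$. In fact the paper's own proof stops there and never supplies the converse; your reverse direction (pre-multiply by $\mathcal{A}$, post-multiply by $\mathcal{B}$, then collapse the outer factors via $\mathcal{A}*_N\mathcal{A}^{(1)}*_N\mathcal{A}=\mathcal{A}$ and $\mathcal{B}*_N\mathcal{B}^{(1)}*_N\mathcal{B}=\mathcal{B}$) correctly fills that gap, and your remark on how the equality of $\{1\}$-inverses must be interpreted is also apt.
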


\begin{proof}
Suppose that $(\mc{A}\n \mc{B})^{(1)}= \mc{B}^{(1)} \n
\mc{A}^{(1)}$. We then have $$\mc{A}\n \mc{B}\n \mc{B}^{(1)} \n
\mc{A}^{(1)}\n \mc{A}\n \mc{B}= \mc{A}\n \mc{B}.$$ Premultiplying
and postmultiplying both sides by $\mc{A}^{(1)}$ and $\mc{B}^{(1)}$,
respectively,  we get $$(\mc{A}^{(1)}\n \mc{A}\n \mc{B} \n
\mc{B}^{(1)})^2=\mc{A}^{(1)}\n \mc{A}\n \mc{B} \n \mc{B}^{(1)}.$$
\end{proof}
We next have a corollary to the above result for rectangular matrices of suitable order.

\begin{cor}
$(AB)^{(1)}= B^{(1)}A^{(1)}$ if and only if
$$(A^{(1)}ABB^{(1)})^2=A^{(1)}ABB^{(1)}.$$
\end{cor}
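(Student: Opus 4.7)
The plan is to view this corollary as the matrix specialization of the preceding tensor theorem: when $N=1$, the order-$N$ tensors in $\mathbb{C}^{I_1\times J_1}$ are exactly matrices, and the Einstein product $*_N$ coincides with ordinary matrix multiplication, so the corollary follows formally from the theorem. However, since the theorem's displayed proof only treats the ``$\Rightarrow$'' direction, I would take the opportunity to record both directions explicitly in matrix notation so the corollary is genuinely a biconditional.

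For the forward direction, assume $(AB)^{(1)} = B^{(1)}A^{(1)}$. The defining condition $AB\,(AB)^{(1)}\,AB = AB$ becomes $ABB^{(1)}A^{(1)}AB = AB$. Pre-multiplying by $A^{(1)}$ and post-multiplying by $B^{(1)}$ yields
$$A^{(1)}ABB^{(1)}A^{(1)}ABB^{(1)} = A^{(1)}ABB^{(1)},$$
which is precisely the idempotency condition. This is the step already recorded in the tensor proof; transcribing it to matrices is immediate.

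For the converse, assume $(A^{(1)}ABB^{(1)})^{2} = A^{(1)}ABB^{(1)}$. Pre-multiply this identity by $A$ and post-multiply by $B$, then collapse $AA^{(1)}A = A$ and $BB^{(1)}B = B$ on the resulting left- and right-hand sides to obtain $ABB^{(1)}A^{(1)}AB = AB$. This says $B^{(1)}A^{(1)} \in (AB)\{1\}$, giving the reverse order law.

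I expect no real obstacle here: both implications reduce to pre/post-multiplication by $A^{(1)}, B^{(1)}$ (or by $A,B$) together with the defining equation $XX^{(1)}X = X$. The only subtle point worth flagging is that, because $\{1\}$-inverses are not unique, ``$B^{(1)}A^{(1)}$ is a $\{1\}$-inverse of $AB$'' is the correct reading of the equality $(AB)^{(1)} = B^{(1)}A^{(1)}$; once this is understood, the equivalence is a short algebraic manipulation.
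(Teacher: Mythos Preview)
Your proposal is correct and follows the paper's approach: the corollary is obtained there simply as the matrix specialization of the tensor theorem, and your forward direction reproduces exactly the argument in the paper (pre- and post-multiply the $\{1\}$-inverse identity by $A^{(1)}$ and $B^{(1)}$). You go a step further by writing out the converse---pre- and post-multiplying the idempotency identity by $A$ and $B$ and collapsing via $AA^{(1)}A=A$, $BB^{(1)}B=B$---which the paper's proof of the tensor theorem omits despite the ``if and only if'' in its statement; this addition is both correct and a genuine improvement over the paper's presentation.
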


Let $\mc{A} = (a_{i_1\cdots i_N j_1 \cdots j_M}) \in
\mathbb{C}^{I_1\times\cdots\times I_N \times J_1 \times\cdots\times
J_M}$ and $\mc{B} = (b_{i_1\cdots i_N k_1 \cdots k_M}) \in
\mathbb{C}^{I_1\times\cdots\times I_N \times K_1 \times\cdots\times
K_M}$. Then {\it row block tensor} consisting  of $\mc{A}$ and
$\mc{B}$ is denoted by
\begin{equation}\label{eq223}
[\mc{A} ~ \mc{B}] \in \mathbb{C}^{\alpha^N\times\beta_1\times \cdots \times \beta_M},
\end{equation}
where $\alpha^N = I_1\times\cdots\times I_N, \beta_i = J_i + K_i, i
= 1, \cdots, M$, and is defined by
\begin{equation*}
[\mc{A} ~ \mc{B}]_{i_1 \cdots i_N l_1 \cdots l_M} =
\begin{cases}
a_{i_1 \cdots i_N l_1 \cdots l_M}, & i_1 \cdots i_N \in [I_1] \times \dots \times [I_N], l_1 \cdots l_M \in [J_1] \times \cdots \times [J_M];
\\
b_{i_1 \cdots i_N l_1 \cdots l_M}, & i_1 \cdots i_N \in [I_1] \times \dots \times [I_N], l_1 \cdots l_M \in \Gamma_1 \times \cdots \times \Gamma_M;
\\
0, & \textnormal{otherwise}.
\end{cases}
\end{equation*}
where $\Gamma_i = \{ J_i +1, \cdots, J_i+K_i\}, i=1,\cdots, M.$

Let $\mc{C} = (c_{j_1 \cdots j_M i_1 \cdots i_N}) \in
\mathbb{C}^{J_1\times\cdots\times J_M \times I_1 \times\cdots\times
I_N}$ and $\mc{D} = (d_{k_1 \cdots k_M i_1 \cdots i_N})  \in
\mathbb{C}^{K_1\times\cdots\times K_M \times I_1 \times\cdots\times
I_N}$. Then {\it column block tensor} consisting of $\mc{C}$ and
$\mc{D}$ is
\begin{equation}\label{eq224}
\left[%
\begin{array}{c}
  \mc{C} \\
  \mc{D} \\
\end{array}%
\right]= [\mc{C}^T ~ \mc{D}^T]^T \in \mathbb{C}^{\beta_1 \times
\cdots \times \beta_M\times\alpha^N}.
\end{equation}
For $\mc{A}_1 \in \mathbb{C}^{I_1\times\cdots\times I_N \times J_1
\times\cdots\times J_M}, \mc{B}_1 \in
\mathbb{C}^{I_1\times\cdots\times I_N \times K_1 \times\cdots\times
K_M}, \mc{A}_2 \in \mathbb{C}^{L_1\times\cdots\times L_N \times J_1
\times\cdots\times J_M}$ and $ \mc{B}_2 \in
\mathbb{C}^{L_1\times\cdots\times L_N \times K_1 \times\cdots\times
K_M}$, we denote $\tau_1 = [\mc{A}_1 ~ \mc{B}_1]$ and $\tau_2 =
[\mc{A}_2 ~ \mc{B}_2]$ as the {\it row block tensors}.
 The {\it column block tensor}  $ \left[
\begin{array}{c}
  {\tau}_1 \\
  {\tau}_2 \\
\end{array}
\right]
$ can be written as
\begin{equation}\label{eq225}
\left[
\begin{array}{c}
  \mc{A}_1 ~~ \mc{B}_1\\
  \mc{A}_2 ~~ \mc{B}_2\\
\end{array}
\right] \in \mathbb{C}^{\rho_1\times\cdots\times \rho_N \times \beta_1 \times\cdots\times \beta_M},
\end{equation}
where $\rho_i = I_i +L_i, i=1,\cdots,N; \beta_j = J_j + K_j$ and $j=1,\cdots , M.$

The product of some  block tensors is recalled next from
\cite{LizBaCY15}.

\begin{lem} (Proposition 2.4, \cite{LizBaCY15})\\
Let  $[\mc{A} ~ \mc{B}], \left[
\begin{array}{c}
  \mc{C}\\
  \mc{D}\\
\end{array}
\right]$ and $ \left[
\begin{array}{c}
  \mc{A}_1 ~~ \mc{B}_1\\
  \mc{A}_2 ~~ \mc{B}_2\\
\end{array}
\right]$ be in the form as in equations \eqref{eq223}, \eqref{eq224}
and \eqref{eq225}, respectively. Then
\begin{itemize}
\item[(a)] $\mc{F} *_N [\mc{A}~~\mc{B}] = [\mc{F}*_N \mc{A} ~~\mc{F}*_N \mc{B}]
 \in  \mathbb{C}^{\alpha^N\times\beta_1\times \cdots \times \beta_M}$;
\item[(b)] $\left[
\begin{array}{c}
  \mc{C}\\
  \mc{D}\\
\end{array}
\right] *_N \mc{F} = \left[
\begin{array}{c}
  \mc{C}*_N \mc{F}\\
  \mc{D}*_N \mc{F}\\
\end{array}
\right] \in  \mathbb{C}^{\beta_1\times \cdots \times\beta_M\times\alpha^N}; $
\item[(c)] $[\mc{A}~~\mc{B}]*_M \left[
\begin{array}{c}
  \mc{C}\\
  \mc{D}\\
\end{array}
\right] = \mc{A}*_M\mc{C} + \mc{B}*_M\mc{D} \in \mathbb{C}^{\alpha^N \times \alpha^N};$
\item[(d)] $\left[
\begin{array}{c}
  \mc{C}\\
  \mc{D}\\
\end{array}
\right] *_N [\mc{A}~~\mc{B}] = \left[
\begin{array}{c}
  \mc{C} *_N \mc{A}~~ \mc{C} *_N \mc{B}\\
  \mc{D}*_N \mc{A}~~ \mc{D}*_N \mc{B}\\
\end{array}
\right] \in  \mathbb{C}^{\beta_1\times \cdots \times\beta_M\times \beta_1 \times \cdots \beta_M};$
\item[(e)] $\left[
\begin{array}{c}
  \mc{A}_1 ~~  \mc{B}_1\\
  \mc{A}_2 ~~ \mc{B}_2\\
\end{array}
\right] *_M \left[
\begin{array}{c}
  \mc{C}\\
  \mc{D}\\
\end{array}
\right] = \left[
\begin{array}{c}
  \mc{A}_1*_M\mc{C} ~~  \mc{B}_1*_M\mc{D}\\
  \mc{A}_2*_M\mc{C} ~~ \mc{B}_2*_M\mc{D}\\
\end{array}
\right] \in  \mathbb{C}^{\rho_1\times \cdots \times\rho_N\times\alpha^N};$

\item[(f)] $ [\mc{G}~~\mc{H}]*_N \left[
\begin{array}{c}
  \mc{A}_1 ~~  \mc{B}_1\\
  \mc{A}_2 ~~ \mc{B}_2\\
\end{array}
\right]=  [\mc{G}*_N\mc{A}_1 + \mc{H}*_N \mc{A}_2 ~~ \mc{G}*_N\mc{B}_1 + \mc{H}*_N \mc{B}_2]
 \in \mathbb{C}^{S_1\times \cdots \times S_N\times \beta_1\times\cdots \times\beta_M};$
\end{itemize}
where $\mc{F} \in \mathbb{C}^{\alpha^N\times\alpha^N},~ \mc{G} \in
\mathbb{C}^{S_1\times\cdots\times S_N\times I_1\times\cdots\times I_N}$ and
 $\mc{H} \in \mathbb{C}^{S_1\times\cdots\times S_N\times L_1\times\cdots\times L_N}.$
\end{lem}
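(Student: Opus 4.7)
The plan is to establish each identity (a)--(f) by expanding both sides entry-wise, using only the definition of the Einstein product and the component-wise descriptions of the block tensors given in \eqref{eq223}--\eqref{eq225}. Each claim is the tensor analogue of a familiar block matrix multiplication rule, and in each case the proof will reduce to splitting a summation index according to a partition determined by the block structure.

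For (a), I would fix an output index $(i_1\cdots i_N,l_1\cdots l_M)$ and write
\[
(\mc{F}*_N[\mc{A}~\mc{B}])_{i_1\cdots i_N l_1\cdots l_M}= \sum_{p_1\cdots p_N} f_{i_1\cdots i_N p_1\cdots p_N}\,[\mc{A}~\mc{B}]_{p_1\cdots p_N l_1\cdots l_M}.
\]
By \eqref{eq223} the second factor equals $a_{p\,l}$ when $(l_1,\ldots,l_M)\in[J_1]\times\cdots\times[J_M]$, equals $b_{p\,l}$ (after shifting each $l_i$ by $-J_i$) when $(l_1,\ldots,l_M)\in\Gamma_1\times\cdots\times\Gamma_M$, and is zero otherwise. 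In the first two cases the sum collapses to the corresponding entry of $\mc{F}*_N\mc{A}$ or $\mc{F}*_N\mc{B}$; in the ``otherwise'' case the right-hand side of \eqref{eq223} applied to $[\mc{F}*_N\mc{A}~~\mc{F}*_N\mc{B}]$ is also zero. This gives (a); (b) follows by the mirror-image argument applied on the right-hand factor.

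For (c), I would observe that the sum in $*_M$ runs over the stacked multi-index $(l_1,\ldots,l_M)$ with $l_i\in\{1,\ldots,J_i+K_i\}$, and that the only terms contributing to the product are those for which \emph{both} $[\mc{A}~\mc{B}]_{\cdot\,l}$ and $\left[\begin{smallmatrix}\mc{C}\\\mc{D}\end{smallmatrix}\right]_{l\,\cdot}$ are nonzero. By \eqref{eq223}--\eqref{eq224} this forces $(l_1,\ldots,l_M)\in[J_1]\times\cdots\times[J_M]$, yielding an $a\cdot c$ contribution, or $(l_1,\ldots,l_M)\in\Gamma_1\times\cdots\times\Gamma_M$, yielding a $b\cdot d$ contribution. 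Summing the two pieces reproduces $\mc{A}*_M\mc{C}+\mc{B}*_M\mc{D}$. Identities (d)--(f) would then follow by the same partition argument applied to one or both of the row and column block indices simultaneously, with (a) and (b) invoked to recognise the resulting sub-blocks as Einstein products of the corresponding constituent tensors.

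The only obstacle throughout will be notational: one has to keep track of the row partition $\rho_i=I_i+L_i$ and the column partition $\beta_j=J_j+K_j$, together with the correct index shifts on the $\Gamma$ ranges, and verify that the block layouts on the two sides of each identity agree. Once this bookkeeping is in place, the identities reduce to the fact that a sum over a disjoint union equals the sum of the two partial sums, combined with the bilinearity of $*_N$ and $*_M$; no ingredient beyond the definitions is required.
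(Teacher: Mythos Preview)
Your entry-wise verification is correct and is exactly the natural way to establish these block-product identities; the only subtlety is the bookkeeping on the ``otherwise'' indices, which you have handled. Note, however, that the paper does not supply its own proof of this lemma: it is merely quoted from \cite{LizBaCY15} (Proposition~2.4), so there is no argument in the paper to compare against. Your proposal therefore goes beyond what the paper does here, and it does so in the expected, straightforward manner.
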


Our last result on $\{1\}$-inverse of tensors is presented below.

\begin{thm}
For tensors $\mc{A}$ and $\mc{B}$ of suitable order, the following
results hold.\\
(i) $[\mc{A}~\mc{B}]\n [\mc{A} ~ \mc{B}]^{(1)}\n \mc{A}= \mc{A}$.\\
(ii) $(\mc{A} \n  \mc{A}^*+ \mc{B}\n  \mc{B}^*)
\n (\mc{A} \n  \mc{A}^*+ \mc{B}\n  \mc{B}^*)^{(1)} \n \mc{A} = \mc{A},$
if $\mc{B}^{(1)}*_N\mc{A} = \mc{O}$ and $\mc{A}^{(1)} *_N \mc{B}   = \mc{O}$. \\
(iii) $(\mc{A}+\mc{U}\n \mc{V})^{(1)}= \mc{A}^{(1)}- \mc{A}^{(1)}\n
\mc{U}\n (\mc{I} + \mc{V}\n \mc{A}^{(1)}\n \mc{U})^{(1)}\n  \mc{V}\n
\mc{A}^{(1)}$ if $\mc{V}=\mc{V}\n \mc{A}^{(1)}\n \mc{A}$ and
$\mc{U}= \mc{A}\n \mc{A}^{(1)}\n \mc{U}$.
\end{thm}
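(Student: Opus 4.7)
The plan is to handle the three parts in sequence, each reducing to a short manipulation built from the block-tensor product identities stated in the preceding lemma together with the defining equation $\mc{X}\n\mc{Y}\n\mc{X}=\mc{X}$ of a $\{1\}$-inverse.

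For (i), the key is that $\mc{A}$ itself factors through the row block. Let $\mc{E}_1$ denote the column block whose top subtensor is $\mc{I}$ and whose bottom subtensor is $\mc{O}$, chosen of matching format. Part (c) of the block-product lemma yields $[\mc{A}~\mc{B}]\n\mc{E}_1=\mc{A}$, so
\[
[\mc{A}~\mc{B}]\n[\mc{A}~\mc{B}]^{(1)}\n\mc{A}=[\mc{A}~\mc{B}]\n[\mc{A}~\mc{B}]^{(1)}\n[\mc{A}~\mc{B}]\n\mc{E}_1=[\mc{A}~\mc{B}]\n\mc{E}_1=\mc{A},
\]
using the defining equation for $[\mc{A}~\mc{B}]^{(1)}$.

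For (ii), set $\mc{T}=[\mc{A}~\mc{B}]$, so that $\mc{T}\n\mc{T}^*=\mc{A}\n\mc{A}^*+\mc{B}\n\mc{B}^*$ by the block-product rules. The earlier Corollary (the identity $\mc{A}^*=(\mc{A}^*\n\mc{A})\n(\mc{A}^*\n\mc{A})^{(1)}\n\mc{A}^*$), applied with $\mc{A}$ replaced by $\mc{T}^*$, becomes $\mc{T}=(\mc{T}\n\mc{T}^*)\n(\mc{T}\n\mc{T}^*)^{(1)}\n\mc{T}$; postmultiplying by $\mc{E}_1$ delivers (ii). It is worth noting that the hypotheses $\mc{B}^{(1)}\n\mc{A}=\mc{O}$ and $\mc{A}^{(1)}\n\mc{B}=\mc{O}$ are not consumed by this route; alternatively they can be fed into a direct expansion of $(\mc{A}\n\mc{A}^*+\mc{B}\n\mc{B}^*)\n\mc{Z}\n(\mc{A}\n\mc{A}^*+\mc{B}\n\mc{B}^*)$ to cancel cross terms, if one prefers to avoid the block-tensor machinery.

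For (iii), I would verify the Penrose condition (1) directly for the proposed $\mc{X}=\mc{A}^{(1)}-\mc{A}^{(1)}\n\mc{U}\n\mc{Q}^{(1)}\n\mc{V}\n\mc{A}^{(1)}$ with $\mc{Q}=\mc{I}+\mc{V}\n\mc{A}^{(1)}\n\mc{U}$. The hypotheses $\mc{U}=\mc{A}\n\mc{A}^{(1)}\n\mc{U}$ and $\mc{V}=\mc{V}\n\mc{A}^{(1)}\n\mc{A}$ license the substitutions $\mc{A}\n\mc{A}^{(1)}\n\mc{U}\mapsto\mc{U}$ and $\mc{V}\n\mc{A}^{(1)}\n\mc{A}\mapsto\mc{V}$ throughout. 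The two regroupings that drive the calculation are $(\mc{A}+\mc{U}\n\mc{V})\n\mc{A}^{(1)}\n\mc{U}=\mc{U}\n\mc{Q}$ on one side and $\mc{V}\n\mc{A}^{(1)}\n(\mc{A}+\mc{U}\n\mc{V})=\mc{Q}\n\mc{V}$ on the other, where the second uses $\mc{V}\n\mc{A}^{(1)}\n\mc{U}\n\mc{V}=(\mc{Q}-\mc{I})\n\mc{V}$. With these in hand, the part of $(\mc{A}+\mc{U}\n\mc{V})\n\mc{X}\n(\mc{A}+\mc{U}\n\mc{V})$ involving $\mc{Q}^{(1)}$ collects into $\mc{U}\n\mc{Q}\n\mc{Q}^{(1)}\n\mc{Q}\n\mc{V}=\mc{U}\n\mc{Q}\n\mc{V}$ via $\mc{Q}\n\mc{Q}^{(1)}\n\mc{Q}=\mc{Q}$, and this cancels against the $\mc{U}\n\mc{Q}\n\mc{V}$ produced by the first term $(\mc{A}+\mc{U}\n\mc{V})\n\mc{A}^{(1)}\n(\mc{A}+\mc{U}\n\mc{V})=\mc{A}+\mc{U}\n\mc{V}+\mc{U}\n\mc{Q}\n\mc{V}$, leaving the desired $\mc{A}+\mc{U}\n\mc{V}$.

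The principal obstacle is (iii). The weaker defining identity $\mc{Q}\n\mc{Q}^{(1)}\n\mc{Q}=\mc{Q}$ (rather than $\mc{Q}\n\mc{Q}^{(1)}=\mc{I}$) forces every cancellation involving $\mc{Q}^{(1)}$ to pass through that exact sandwich; the reason the hypotheses appear is precisely to let the outer factors be massaged into an explicit $\mc{U}\n\mc{Q}$ on the left and $\mc{Q}\n\mc{V}$ on the right before it is invoked. The algebra itself is routine once the regroupings are visible, but several terms arise from the full expansion and the bookkeeping must be tight for them to telescope correctly.
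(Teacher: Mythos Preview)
Your proposal is correct and tracks the paper's own argument closely. Parts (i) and (ii) are exactly what the paper does: write $\mc{A}=[\mc{A}~\mc{B}]\n\mc{E}_1$ and, for (ii), set $\mc{R}=[\mc{A}~\mc{B}]$ and invoke $\mc{R}\n\mc{R}^*\n(\mc{R}\n\mc{R}^*)^{(1)}\n\mc{R}=\mc{R}$; your remark that the hypotheses $\mc{B}^{(1)}\n\mc{A}=\mc{O}$, $\mc{A}^{(1)}\n\mc{B}=\mc{O}$ are not consumed is accurate---the paper's proof does not use them either. For (iii) the paper follows the same strategy (verify condition (1) directly, using the hypotheses to collapse $\mc{A}\n\mc{A}^{(1)}\n\mc{U}$ and $\mc{V}\n\mc{A}^{(1)}\n\mc{A}$), but carries out a long brute-force expansion before factoring; your introduction of $\mc{Q}=\mc{I}+\mc{V}\n\mc{A}^{(1)}\n\mc{U}$ and the two regroupings $(\mc{A}+\mc{U}\n\mc{V})\n\mc{A}^{(1)}\n\mc{U}=\mc{U}\n\mc{Q}$ and $\mc{V}\n\mc{A}^{(1)}\n(\mc{A}+\mc{U}\n\mc{V})=\mc{Q}\n\mc{V}$ achieves the same cancellation in a much tidier way.
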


\begin{proof}
(i) Writting  $\mc{A}$ as $[\mc{A}~\mc{B}]\n \left[%
\begin{array}{c}
  \mc{I} \\
  \mc{O} \\
\end{array}%
\right]$ in $[\mc{A} ~\mc{B}]\n [\mc{A}~\mc{B}]^{(1)}\n \mc{A}$, we
get the equality.

(ii)
Setting $[\mc{A}~\mc{B}]=\mc{R}$, we have $\mc{A} \n
\mc{A}^{(1)}+ \mc{B}\n \mc{B}^{(1)}= \mc{R} \n \mc{R}^{(1)}$. Then $\mc{A}=[\mc{A} ~\mc{B}]\n \left[%
\begin{array}{c}
  \mc{I} \\
  \mc{O} \\
\end{array}%
\right]= \mc{R}  \n \left[%
\begin{array}{c}
  \mc{I} \\
  \mc{O} \\
\end{array}%
\right].$ Hence $(\mc{A} \n  \mc{A}^*+ \mc{B}\n  \mc{B}^* \n (\mc{A}
\n  \mc{A}^{*}+ \mc{B}\n  \mc{B}^*)^{(1)} \n \mc{A} = \mc{R} \n
\mc{R}^* \n (\mc{R} \n \mc{R}^*)^{(1)} \n \mc{R}  \n \left[%
\begin{array}{c}
  \mc{I} \\
  \mc{O} \\
\end{array}%
\right] = \mc{R}  \n \left[%
\begin{array}{c}
  \mc{I} \\
  \mc{O} \\
\end{array}%
\right]= \mc{A}.$

(iii) We have
\begin{eqnarray*}
&& (\mc{A}+\mc{U}\n \mc{V})\n [\mc{A}^{(1)}\n \mc{U}\n (\mc{I} +
\mc{V}\n \mc{A}^{(1)}\n \mc{U})^{(1)}\n  \mc{V}\n \mc{A}^{(1)})]\n
(\mc{A}+\mc{U}\n \mc{V})\\
&=&(\mc{A} + \mc{U}*_N\mc{V})*_N((\mc{A}^{(1)}*_N\mc{U} + \mc{A}^{(1)}*_N\mc{U}*_N\mc{V}*_N\mc{A}^{(1)}*_N\mc{U})*_N\mc{V}*_N\mc{A}^{(1)})\\
&& ~ *_N(\mc{A}+\mc{U}*_N\mc{V})\\
%
&=& (\mc{A} + \mc{U}*_N\mc{V})*_N(\mc{A}^{(1)}*_N\mc{U}*_N\mc{V}*_N\mc{A}^{(1)} + \mc{A}^{(1)}*_N\mc{U}*_N\mc{V}*_N\mc{A}^{(1)}\\
&& ~  *_N\mc{U}*_N\mc{V}*_N\mc{A}^{(1)})*_N(\mc{A}+\mc{U}*_N\mc{V})\\
&=&(\mc{A} + \mc{U}*_N\mc{V})*_N(\mc{A}^{(1)}*_N\mc{U}*_N\mc{V} + \mc{A}^{(1)}*_N\mc{U}*_N\mc{V}*_N\mc{A}^{(1)}*_N\mc{U}*_N\mc{V} + \mc{A}^{(1)}\\
&& ~ *_N\mc{U}*_N\mc{V}*_N\mc{A}^{(1)}*_N\mc{U}*_N\mc{V} +\mc{A}^{(1)}*_N\mc{U}*_N\mc{V}*_N\mc{A}^{(1)}*_N\mc{U}*_N\mc{V}*_N\mc{A}^{(1)}*_N\mc{U}*_N\mc{V})\\
&=& \mc{U}*_N\mc{V} + \mc{U}*_N\mc{V}*_N\mc{A}^{(1)}*_N\mc{U}*_N\mc{V} + \mc{U}*_N\mc{V}*_N\mc{A}^{(1)}*_N\mc{U}*_N\mc{V} \\
&& ~ + \mc{U}*_N\mc{V}*_N\mc{A}^{(1)}*_N\mc{U}*_N\mc{V}*_N\mc{A}^{(1)}*_N\mc{U}*_N\mc{V}\\
&&~  + \mc{U}*_N\mc{V}*_N\mc{A}^{(1)}*_N\mc{U}*_N\mc{V} +\mc{U}*_N\mc{V}*_N\mc{A}^{(1)}*_N\mc{U}*_N\mc{V}*_N\mc{A}^{(1)}*_N\mc{U}*_N\mc{V}\\
&& ~ +\mc{U}*_N\mc{V}*_N\mc{A}^{(1)}*_N\mc{U}*_N\mc{V}*_N\mc{A}^{(1)}*_N\mc{U}*_N\mc{V}\\
&& ~
+\mc{U}*_N\mc{V}*_N\mc{A}^{(1)}*_N\mc{U}*_N\mc{V}*_N\mc{A}^{(1)}
*_N\mc{U}*_N\mc{V}*_N\mc{A}^{(1)}*_N\mc{U}*_N\mc{V}\\
&=& (\mc{U}+\mc{U}*_N\mc{V}*_N\mc{A}^{(1)}*_N\mc{U} + \mc{U}*_N\mc{V}*_N\mc{A}^{(1)}*_N\mc{U} + \mc{U}*_N\mc{V}*_N\mc{A}^{(1)}*_N\mc{U}\\
&& ~ *_N\mc{V}*_N\mc{A}^{(1)}*_N\mc{U})*_N(\mc{V}+\mc{V}*_N\mc{A}^{(1)}*_N\mc{U}*_N\mc{V})\\
&=& (\mc{U} +\mc{U} *_N\mc{V}*_N\mc{A}^{(1)}*_N\mc{U}) *_N(\mc{I} +
\mc{V}*_N\mc{A}^{(1)}*_N\mc{U})*_N(\mc{V} +
\mc{V}*_N\mc{A}^{(1)}*_N\mc{U}*_N\mc{V})\\
 &=&(\mc{A}\n \mc{A}^{(1)}\n \mc{U}+ \mc{U}\n \mc{V}\n
\mc{A}^{(1)}\n \mc{U})\n  (\mc{I}+\mc{V}\n \mc{A}^{(1)}\n
\mc{U})^{(1)} \n (\mc{V}\n \mc{A}^{(1)}\n \mc{A}+\\
 & & \mc{V}\n
\mc{A}^{(1)}\n \mc{U}\n \mc{V}).
\end{eqnarray*}
Therefore
\begin{eqnarray*} & &(\mc{A}+\mc{U}\n \mc{V}) \n
(\mc{A}^{(1)}- \mc{A}^{(1)}\n \mc{U}\n (\mc{I} + \mc{V}\n
\mc{A}^{(1)}\n
\mc{U})^{(1)}\n  \mc{V}\n \mc{A}^{(1)})\n (\mc{A}+\mc{U}\n \mc{V})\\
\end{eqnarray*}

\begin{eqnarray*}
 &= & (\mc{A}+\mc{U}\n \mc{V}) \n \mc{A}^{(1)}\n (\mc{A}+\mc{U}\n
\mc{V}) -  (\mc{A}\n \mc{A}^{(1)}\n \mc{U}+ \mc{U}\n \mc{V}\n
\mc{A}^{(1)}\n \mc{U})\n \\
 & & (\mc{I}+\mc{V}\n \mc{A}^{(1)}\n
\mc{U})^{(1)} \n (\mc{V}\n \mc{A}^{(1)}\n \mc{A}+\mc{V}\n
\mc{A}^{(1)}\n \mc{U}\n \mc{V})\\
  &= & \mc{A}+2 \mc{U}\n \mc{V}+
\mc{U}\n \mc{V} \n \mc{A}^{(1)} \n \mc{U}\n \mc{V}-
 \mc{V}*_N (\mc{I}+\mc{V}\n \mc{A}^{(1)}\n \mc{U})\n  \mc{U}\\
  &= & \mc{A}+\mc{U}\n \mc{V}.
 \end{eqnarray*}
\end{proof}

 The following corollary is obtained in case of rectangular matrices.

\begin{cor}
For matrices $A$ and $B$ of suitable order, the following
results hold.\\
(i) $[A ~B] [A ~ B]^{(1)}A= A$.\\
(ii) $(AA^*+ BB^*)(AA^*+ BB^*)^{(1)}A= A$ if $B^{(1)}A = O$ and $A^{(1)} B =O$. \\
(iii) $(A+UV)^{(1)}= A^{(1)}- A^{(1)}U (I + VA^{(1)}U^{(1)})V
A^{(1)}$ if $V=VA^{(1)}A$ and $U= AA^{(1)}U$.
\end{cor}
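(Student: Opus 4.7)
The plan is to verify each identity by direct calculation, leveraging the block tensor arithmetic established in the preceding Lemma together with the Moore--Penrose properties from Lemma \ref{lemm11}. The unifying observation behind (i) and (ii) is the representation
$\mc{A} = [\mc{A}~\mc{B}] \n \left[\begin{array}{c}\mc{I}\\\mc{O}\end{array}\right]$,
which is an immediate consequence of part (c) of the preceding block tensor product lemma.

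For (i), substitute this representation for the rightmost $\mc{A}$ on the left-hand side. The factor $[\mc{A}~\mc{B}]\n[\mc{A}~\mc{B}]^{(1)}\n[\mc{A}~\mc{B}]$ collapses to $[\mc{A}~\mc{B}]$ by the defining property of a $\{1\}$-inverse, and the remaining block product recovers $\mc{A}$. For (ii), write $\mc{R}:=[\mc{A}~\mc{B}]$; a block product calculation gives $\mc{R}\n\mc{R}^* = \mc{A}\n\mc{A}^* + \mc{B}\n\mc{B}^*$, so the left-hand side equals $\mc{R}\n\mc{R}^*\n(\mc{R}\n\mc{R}^*)^{(1)}\n\mc{R}\n\left[\begin{array}{c}\mc{I}\\\mc{O}\end{array}\right]$. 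It therefore suffices to prove the cancellation $\mc{R}\n\mc{R}^*\n(\mc{R}\n\mc{R}^*)^{(1)}\n\mc{R}=\mc{R}$. Starting from the $\{1\}$-inverse identity $\mc{R}\n\mc{R}^*\n(\mc{R}\n\mc{R}^*)^{(1)}\n\mc{R}\n\mc{R}^* = \mc{R}\n\mc{R}^*$ and post-multiplying by $(\mc{R}^*)^{\dg}$, Lemma \ref{lemm11}(c) in the form $\mc{R}\n\mc{R}^*\n(\mc{R}^*)^{\dg} = \mc{R}$ produces exactly this cancellation. The hypotheses $\mc{A}^{(1)}\n\mc{B}=\mc{O}$ and $\mc{B}^{(1)}\n\mc{A}=\mc{O}$ additionally make $\left[\begin{array}{c}\mc{A}^{(1)}\\\mc{B}^{(1)}\end{array}\right]$ a $\{1\}$-inverse of $\mc{R}$, giving a natural block interpretation of the resulting identity.

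For (iii), the tensor Sherman--Morrison--Woodbury formula, I would verify directly that the candidate
$\mc{X}:=\mc{A}^{(1)}-\mc{A}^{(1)}\n\mc{U}\n(\mc{I}+\mc{V}\n\mc{A}^{(1)}\n\mc{U})^{(1)}\n\mc{V}\n\mc{A}^{(1)}$
satisfies $(\mc{A}+\mc{U}\n\mc{V})\n\mc{X}\n(\mc{A}+\mc{U}\n\mc{V}) = \mc{A}+\mc{U}\n\mc{V}$. Expanding this triple product and applying the hypotheses $\mc{V}=\mc{V}\n\mc{A}^{(1)}\n\mc{A}$ and $\mc{U}=\mc{A}\n\mc{A}^{(1)}\n\mc{U}$ repeatedly to absorb triple products, the subtracted term factors as $(\mc{U}+\mc{U}\n\mc{V}\n\mc{A}^{(1)}\n\mc{U})\n(\mc{I}+\mc{V}\n\mc{A}^{(1)}\n\mc{U})^{(1)}\n(\mc{V}+\mc{V}\n\mc{A}^{(1)}\n\mc{U}\n\mc{V})$. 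Pulling the common factor $\mc{I}+\mc{V}\n\mc{A}^{(1)}\n\mc{U}$ out of each outer piece lets the $\{1\}$-inverse sandwich collapse by its defining property, reducing the subtracted term to $\mc{U}\n\mc{V}+\mc{U}\n\mc{V}\n\mc{A}^{(1)}\n\mc{U}\n\mc{V}$. The first term $(\mc{A}+\mc{U}\n\mc{V})\n\mc{A}^{(1)}\n(\mc{A}+\mc{U}\n\mc{V})$ simplifies via the same hypotheses to $\mc{A}+2\mc{U}\n\mc{V}+\mc{U}\n\mc{V}\n\mc{A}^{(1)}\n\mc{U}\n\mc{V}$, and the difference is $\mc{A}+\mc{U}\n\mc{V}$, as required.

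The main obstacle is the bookkeeping in (iii): because the inner $(\mc{I}+\mc{V}\n\mc{A}^{(1)}\n\mc{U})^{(1)}$ cannot itself be distributed, one must coax the expansion into a symmetric form $\mc{U}\n(\mc{I}+\mc{V}\n\mc{A}^{(1)}\n\mc{U})\n(\mc{I}+\mc{V}\n\mc{A}^{(1)}\n\mc{U})^{(1)}\n(\mc{I}+\mc{V}\n\mc{A}^{(1)}\n\mc{U})\n\mc{V}$ before the $\{1\}$-inverse defining property can be applied, and this in turn relies on using each of the two hypotheses at exactly the right place. Parts (i) and (ii), by comparison, reduce to essentially one line each once the block decomposition $\mc{A} = \mc{R}\n\left[\begin{array}{c}\mc{I}\\\mc{O}\end{array}\right]$ is noticed.
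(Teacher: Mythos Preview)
Your approach is essentially the same as the paper's: the paper proves the tensor theorem immediately preceding this corollary (from which the matrix case follows by specialization) via exactly the block representation $\mc{A}=[\mc{A}~\mc{B}]\n\left[\begin{smallmatrix}\mc{I}\\\mc{O}\end{smallmatrix}\right]$ for (i)--(ii) and the direct triple-product expansion for (iii), arriving at the same factorization $\mc{U}\n(\mc{I}+\mc{V}\n\mc{A}^{(1)}\n\mc{U})\n(\mc{I}+\mc{V}\n\mc{A}^{(1)}\n\mc{U})^{(1)}\n(\mc{I}+\mc{V}\n\mc{A}^{(1)}\n\mc{U})\n\mc{V}$ before collapsing.

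One point worth flagging: your treatment of (ii) is in fact cleaner than the paper's. You justify the key cancellation $\mc{R}\n\mc{R}^*\n(\mc{R}\n\mc{R}^*)^{(1)}\n\mc{R}=\mc{R}$ explicitly via Lemma~\ref{lemm11}(c), whereas the paper asserts it without comment; and you correctly observe that the hypotheses $\mc{A}^{(1)}\n\mc{B}=\mc{O}$, $\mc{B}^{(1)}\n\mc{A}=\mc{O}$ are not actually used in the argument (the paper's opening line ``$\mc{A}\n\mc{A}^{(1)}+\mc{B}\n\mc{B}^{(1)}=\mc{R}\n\mc{R}^{(1)}$'' invokes them but is never used downstream). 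So (ii) holds unconditionally, and your remark that the hypotheses merely furnish a block $\{1\}$-inverse of $\mc{R}$ is a nice addendum rather than a necessary ingredient.
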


Next result is for reflexive generalized inverse of a tensor
$\mc{A}\in \mathbb{C}^{I_1\times\cdots\times I_N \times J_1
\times\cdots\times J_N }$.

\begin{lem}\label{12}
Let  $\mc{Y}, \mc{Z} \in  \mc{A}{\{1\}}$ and $\mc{X} = \mc{Y}*_N\mc{A}*_N\mc{Z}$.
Then $\mc{X} \in  \mc{A}{\{1, 2\}}$.
\end{lem}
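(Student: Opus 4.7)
The plan is a direct verification that $\mc{X} = \mc{Y}*_N\mc{A}*_N\mc{Z}$ satisfies equations (1) and (2) of Definition \ref{defmpi}, relying only on associativity of $*_N$ and the defining property $\mc{A}*_N\mc{Y}*_N\mc{A} = \mc{A}*_N\mc{Z}*_N\mc{A} = \mc{A}$ that comes from $\mc{Y}, \mc{Z} \in \mc{A}\{1\}$.

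For equation (1), I would compute
\[
\mc{A}*_N\mc{X}*_N\mc{A} \;=\; \mc{A}*_N\mc{Y}*_N\mc{A}*_N\mc{Z}*_N\mc{A}
\]
and collapse the first three factors using $\mc{A}*_N\mc{Y}*_N\mc{A} = \mc{A}$, leaving $\mc{A}*_N\mc{Z}*_N\mc{A}$, which equals $\mc{A}$ since $\mc{Z} \in \mc{A}\{1\}$. One could also first collapse the last three factors; either bracketing works because $*_N$ is associative (as recalled in the paper just after equation (1.1)).

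For equation (2), I would expand
\[
\mc{X}*_N\mc{A}*_N\mc{X} \;=\; \mc{Y}*_N\mc{A}*_N\mc{Z}*_N\mc{A}*_N\mc{Y}*_N\mc{A}*_N\mc{Z},
\]
then apply $\mc{A}*_N\mc{Z}*_N\mc{A} = \mc{A}$ on the middle three factors and $\mc{A}*_N\mc{Y}*_N\mc{A} = \mc{A}$ on the now-adjacent block, ending with $\mc{Y}*_N\mc{A}*_N\mc{Z} = \mc{X}$. Once again associativity lets me choose the order of collapses freely.

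There is essentially no obstacle here: both verifications are a single line of substitution followed by two applications of the $\{1\}$-inverse defining equation. The only care needed is bookkeeping the order dimensions (so that every $*_N$ is between tensors of compatible shape $I_1\times\cdots\times I_N$ and $J_1\times\cdots\times J_N$), but since $\mc{Y}$ and $\mc{Z}$ already have the correct shape to be $\{1\}$-inverses of $\mc{A}$, the product $\mc{Y}*_N\mc{A}*_N\mc{Z}$ lies in $\mathbb{C}^{J_1\times\cdots\times J_N\times I_1\times\cdots\times I_N}$ as required.
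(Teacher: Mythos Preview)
Your proposal is correct and essentially identical to the paper's proof: both verify conditions (1) and (2) of Definition~\ref{defmpi} directly by expanding $\mc{X}=\mc{Y}*_N\mc{A}*_N\mc{Z}$, using associativity of $*_N$, and collapsing via $\mc{A}*_N\mc{Y}*_N\mc{A}=\mc{A}$ and $\mc{A}*_N\mc{Z}*_N\mc{A}=\mc{A}$ in the same order you indicate.
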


\begin{proof}
To have this, we have to  show that $\mc{X}$ satisfies conditions
(1) and (2) of Definition \ref{defmpi}. So,
\begin{eqnarray*}
\mc{A}*_N\mc{X}*_N \mc{A}
 &=& \mc{A}*_N(\mc{Y}*_N \mc{A}*_N\mc{Z})*_N\mc{A}\\
 &=& (\mc{A}*_N\mc{Y}*_N \mc{A})*_N\mc{Z}*_N\mc{A}\\
&=& \mc{A}*_N\mc{Z}*_N \mc{A}
= \mc{A}.\\
\textnormal{Again, we have } & & \\
\mc{X}*_N\mc{A}*_N \mc{X} &=& (\mc{Y}*_N\mc{A}*_N \mc{Z})*_N\mc{A}*_N(\mc{Y}*_N\mc{A}*_N\mc{Z})\nonumber\\
&=&  \mc{Y}*_N(\mc{A}*_N \mc{Z}*_N\mc{A})*_N\mc{Y}*_N\mc{A}*_N\mc{Z}\nonumber\\
&=&  \mc{Y}*_N(\mc{A}*_N \mc{Y}*_N\mc{A})*_N\mc{Z}\\
&=& \mc{Y}*_N\mc{A}*_N \mc{Z}
= \mc{X}.
\end{eqnarray*}
\end{proof}

A characterization of class of $\{1, 3\}$-inverse  of $\mc{A}\in
\mathbb{C}^{I_1\times\cdots\times I_N \times J_1 \times\cdots\times
J_N }$ is produced below.

\begin{thm}\label{13th}
The set $\mc{A}{\{ 1, 3\}}$ consists of all solutions $\mc{X}$ of
$$\mc{A}*_N\mc{X} = \mc{A}*_N\mc{A}^{(1,3)}.$$
\end{thm}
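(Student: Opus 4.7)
The plan is to prove both set inclusions, which reduces to showing that for any $\mc{X} \in \mathbb{C}^{J_1\times\cdots\times J_N \times I_1 \times\cdots\times I_N}$, the condition $\mc{X}\in\mc{A}\{1,3\}$ is equivalent to the single equation $\mc{A}*_N\mc{X}=\mc{A}*_N\mc{A}^{(1,3)}$. Write $\mc{Y}=\mc{A}^{(1,3)}$ for brevity; by hypothesis $\mc{Y}$ satisfies (1) $\mc{A}*_N\mc{Y}*_N\mc{A}=\mc{A}$ and (3) $(\mc{A}*_N\mc{Y})^*=\mc{A}*_N\mc{Y}$.

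For the $(\Leftarrow)$ direction, assume $\mc{A}*_N\mc{X}=\mc{A}*_N\mc{Y}$. Post-multiplying by $\mc{A}$ and using property (1) for $\mc{Y}$ gives $\mc{A}*_N\mc{X}*_N\mc{A}=\mc{A}*_N\mc{Y}*_N\mc{A}=\mc{A}$, so (1) holds for $\mc{X}$. Taking the conjugate transpose and using property (3) for $\mc{Y}$ gives $(\mc{A}*_N\mc{X})^*=(\mc{A}*_N\mc{Y})^*=\mc{A}*_N\mc{Y}=\mc{A}*_N\mc{X}$, so (3) holds for $\mc{X}$. Hence $\mc{X}\in\mc{A}\{1,3\}$. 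This direction is immediate.

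For the $(\Rightarrow)$ direction, assume $\mc{X}\in\mc{A}\{1,3\}$. The key identity to establish is the chain
\begin{eqnarray*}
\mc{A}*_N\mc{X} &=& (\mc{A}*_N\mc{X})^* \;=\; \mc{X}^**_N\mc{A}^* \;=\; \mc{X}^**_N(\mc{A}*_N\mc{Y}*_N\mc{A})^*\\
&=& \mc{X}^**_N\mc{A}^**_N(\mc{A}*_N\mc{Y})^* \;=\; \mc{X}^**_N\mc{A}^**_N\mc{A}*_N\mc{Y}\\
&=& (\mc{A}*_N\mc{X})^**_N\mc{A}*_N\mc{Y} \;=\; \mc{A}*_N\mc{X}*_N\mc{A}*_N\mc{Y} \;=\; \mc{A}*_N\mc{Y},
\end{eqnarray*}
where the first equality uses (3) for $\mc{X}$, the fourth uses (1) for $\mc{Y}$, the fifth uses (3) for $\mc{Y}$, and the last uses (1) for $\mc{X}$. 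This yields exactly $\mc{A}*_N\mc{X}=\mc{A}*_N\mc{A}^{(1,3)}$.

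There is no real obstacle here; the proof is a purely algebraic symbol-push. The only thing to be careful about is bookkeeping of the conjugate transpose under the Einstein product, namely that $(\mc{A}*_N\mc{B})^*=\mc{B}^**_N\mc{A}^*$ and that $(\mc{A}^*)^*=\mc{A}$. Both of these are standard consequences of the definition of $\mc{A}^*$ from the preliminaries and are used throughout the paper without comment. Once those are available, both directions of the equivalence reduce to the computations above, completing the proof.
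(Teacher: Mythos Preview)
Your proof is correct and follows essentially the same approach as the paper. The $(\Leftarrow)$ direction is identical, and for $(\Rightarrow)$ the paper runs the chain in the opposite direction (starting from $\mc{A}*_N\mc{A}^{(1,3)}$ and inserting $\mc{A}*_N\mc{X}*_N\mc{A}$ via property (1) for $\mc{X}$, then collapsing via $\mc{A}^**_N\mc{A}*_N\mc{A}^{(1,3)}=\mc{A}^*$), but the algebraic content---using (1) and (3) for both $\mc{X}$ and the fixed $\{1,3\}$-inverse together with $(\mc{A}*_N\mc{B})^*=\mc{B}^**_N\mc{A}^*$---is the same.
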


\begin{proof}
 Since $\mc{A}^{(1,3)}$ is a $\{1, 3\}$-inverse of $\mc{A}$, so
$\mc{A}*_N\mc{A}^{(1,3)} *_N\mc{A} = \mc{A}$ and $(\mc{A} *_N
\mc{A}^{(1,3)})^* = \mc{A} *_N \mc{A}^{(1,3)}.$ If $\mc{A}*_N\mc{X}
= \mc{A}*_N\mc{A}^{(1,3)}$, then we have
$$\mc{A}*_N\mc{X} *_N\mc{A} = \mc{A}*_N\mc{A}^{(1,3)} *_N\mc{A} = \mc{A}$$ and
$$(\mc{A} *_N \mc{X})^* = (\mc{A}*_N\mc{A}^{(1,3)})^* = \mc{A}*_N\mc{A}^{(1,3)} = \mc{A} *_N\mc{X}.$$
So $\mc{X}$ satisfies property (1) and (3) of Definition
\ref{defmpi}. Hence $\mc{X} \in \mc{A}{\{1,3\}}$. On the other hand,
suppose that $\mc{X} \in \mc{A}{\{1,3\}}.$ Then
\begin{eqnarray*}
\mc{A}*_N\mc{A}^{(1,3)} &=& \mc{A}*_N\mc{X}\n \mc{A}*_N\mc{A}^{(1,3)}\\
 &=& (\mc{A}*_N\mc{X})^**_N\mc{A}*_N\mc{A}^{(1,3)}\\
&=&  \mc{X}^* \n \mc{A}^*  = \mc{A}*_N\mc{X},~ \textnormal{as}~
(\mc{A}^{(1)})^* \in \mc{A}^*{\{ 1\}}.
\end{eqnarray*}
\end{proof}

Similarly, we have the following one for $\mc{A}\in
\mathbb{C}^{I_1\times\cdots\times I_N \times J_1 \times\cdots\times
J_N }$.

\begin{thm}\label{14th}
The set $\mc{A}{\{ 1,4\}}$ consists of all solutions $\mc{X}$ of
$$\mc{X}*_N\mc{A} = \mc{A}^{(1,4)}*_N\mc{A}.$$
\end{thm}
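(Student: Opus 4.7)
The plan is to mirror the proof of Theorem \ref{13th} with the roles of the third and fourth Penrose conditions swapped, and with left and right multiplications interchanged throughout. The statement has the same form as Theorem \ref{13th} but applied to the ``other side'' of $\mc{A}$, so both containments should go through by the obvious dualization.

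For the forward direction, I would suppose $\mc{X}*_N\mc{A} = \mc{A}^{(1,4)}*_N\mc{A}$ and premultiply by $\mc{A}$; this immediately gives $\mc{A}*_N\mc{X}*_N\mc{A} = \mc{A}*_N\mc{A}^{(1,4)}*_N\mc{A} = \mc{A}$, which is condition (1). For condition (4), I would use that $\mc{A}^{(1,4)}*_N\mc{A}$ is hermitian by the $\{1,4\}$-property of $\mc{A}^{(1,4)}$, so $(\mc{X}*_N\mc{A})^* = (\mc{A}^{(1,4)}*_N\mc{A})^* = \mc{A}^{(1,4)}*_N\mc{A} = \mc{X}*_N\mc{A}$. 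Hence $\mc{X} \in \mc{A}\{1,4\}$.

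For the reverse direction, assume $\mc{X} \in \mc{A}\{1,4\}$ and compute, in order:
\begin{eqnarray*}
\mc{A}^{(1,4)}*_N\mc{A} &=& \mc{A}^{(1,4)}*_N\mc{A}*_N\mc{X}*_N\mc{A} \qquad (\text{using (1) for } \mc{X})\\
&=& \mc{A}^{(1,4)}*_N\mc{A}*_N(\mc{X}*_N\mc{A})^* \qquad (\text{using (4) for } \mc{X})\\
&=& \mc{A}^{(1,4)}*_N\mc{A}*_N\mc{A}^**_N\mc{X}^*\\
&=& \mc{A}^**_N\mc{X}^* = \mc{X}*_N\mc{A},
\end{eqnarray*}
where the penultimate equality uses the auxiliary identity $\mc{A}^{(1,4)}*_N\mc{A}*_N\mc{A}^* = \mc{A}^*$. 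This identity is the analogue in the proof of Theorem \ref{13th} of the observation ``$(\mc{A}^{(1)})^* \in \mc{A}^*\{1\}$''; it follows by taking the adjoint of $\mc{A}*_N\mc{A}^{(1,4)}*_N\mc{A} = \mc{A}$ and then substituting $\mc{A}^**_N(\mc{A}^{(1,4)})^* = \mc{A}^{(1,4)}*_N\mc{A}$ obtained from the hermitian property of $\mc{A}^{(1,4)}*_N\mc{A}$. The main obstacle is just this bookkeeping with conjugate transposes to verify the auxiliary identity cleanly; no substantive new ideas beyond Theorem \ref{13th} are required.
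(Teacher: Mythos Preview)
Your proof is correct and is precisely the dualization of the paper's proof of Theorem \ref{13th} that the paper intends when it writes ``Similarly, we have the following one'' and omits the details. The forward direction, the reverse computation, and the auxiliary identity $\mc{A}^{(1,4)}*_N\mc{A}*_N\mc{A}^* = \mc{A}^*$ all mirror the corresponding steps in Theorem \ref{13th} with left/right and (3)/(4) swapped, exactly as you describe.
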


Here onwards, all our tensors in this section are assumed to be in
$\mathbb{C}^{I_1\times\cdots\times I_N \times J_1 \times\cdots\times
J_N }$ unless otherwise mentioned. Equivalent conditions for a
$\{1,4\}$-inverse is shown next.

\begin{thm}
The following three conditions are equivalent:\\
 (i) $\mc{B} \in \mc{A}\{1,4\}$.\\
 (ii) $\mc{B} \n \mc{A}\n \mc{A}^*=\mc{A}^*$.\\
 (iii) $\mc{B} \n \mc{A}= \mc{A}^*\n (\mc{A}\n \mc{A}^*)^{(1)}\n
 \mc{A}.$
\end{thm}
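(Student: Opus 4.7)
The plan is to prove the equivalences in a cycle (i)$\Rightarrow$(ii)$\Rightarrow$(iii)$\Rightarrow$(i), leveraging the two earlier results: the equivalent-conditions theorem for $\{1\}$-inverses (in particular its part (iii)) and the corollary stating $\mc{A}^* = (\mc{A}^*\n \mc{A})\n (\mc{A}^*\n \mc{A})^{(1)}\n \mc{A}^*$. The essential trick throughout is to apply each of these auxiliary results \emph{with $\mc{A}$ replaced by $\mc{A}^*$}; this yields the companion identities
\begin{equation*}
\mc{A} = \mc{A}\n \mc{A}^*\n (\mc{A}\n \mc{A}^*)^{(1)}\n \mc{A}
\qquad \text{and} \qquad \bigl(\mc{A}^*\n (\mc{A}\n \mc{A}^*)^{(1)}\n \mc{A}\bigr)^* = \mc{A}^*\n (\mc{A}\n \mc{A}^*)^{(1)}\n \mc{A},
\end{equation*}
which are precisely the two facts needed to close the loop.

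For (i)$\Rightarrow$(ii), I would simply take the conjugate transpose of the $\{1\}$-condition $\mc{A}\n \mc{B}\n \mc{A} = \mc{A}$ to obtain $\mc{A}^*\n \mc{B}^*\n \mc{A}^* = \mc{A}^*$, and then use the $\{4\}$-condition in the form $\mc{A}^*\n \mc{B}^* = \mc{B}\n \mc{A}$ to rewrite the left-hand side as $\mc{B}\n \mc{A}\n \mc{A}^*$, yielding (ii). For (ii)$\Rightarrow$(iii), I would post-multiply (ii) by $(\mc{A}\n \mc{A}^*)^{(1)}\n \mc{A}$, giving $\mc{B}\n \mc{A}\n \mc{A}^*\n (\mc{A}\n \mc{A}^*)^{(1)}\n \mc{A} = \mc{A}^*\n (\mc{A}\n \mc{A}^*)^{(1)}\n \mc{A}$, and collapse the left-hand side using the first companion identity above.

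For (iii)$\Rightarrow$(i), I would verify the two defining relations of a $\{1,4\}$-inverse separately. The $\{4\}$-condition $(\mc{B}\n \mc{A})^* = \mc{B}\n \mc{A}$ follows at once from the second companion identity (the hermiticity of $\mc{A}^*\n (\mc{A}\n \mc{A}^*)^{(1)}\n \mc{A}$, obtained from part (iii) of the earlier theorem with $\mc{A}$ replaced by $\mc{A}^*$). The $\{1\}$-condition $\mc{A}\n \mc{B}\n \mc{A} = \mc{A}$ follows from the first companion identity applied directly to $\mc{A}\n \mc{B}\n \mc{A} = \mc{A}\n \mc{A}^*\n (\mc{A}\n \mc{A}^*)^{(1)}\n \mc{A}$.

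The only potentially delicate point is the implication (iii)$\Rightarrow$(i), since a priori the tensor $(\mc{A}\n \mc{A}^*)^{(1)}$ is not hermitian and so the hermiticity of $\mc{B}\n \mc{A}$ is not obvious from the formula; the argument rests crucially on recognizing that the \emph{sandwich} $\mc{A}^*\n (\mc{A}\n \mc{A}^*)^{(1)}\n \mc{A}$ is, however, always hermitian, which is exactly the content of the earlier equivalent-conditions theorem applied to $\mc{A}^*$. Once this observation is in hand, all three implications are short.
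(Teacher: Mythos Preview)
Your proposal is correct and follows essentially the same cyclic route (i)$\Rightarrow$(ii)$\Rightarrow$(iii)$\Rightarrow$(i) as the paper, with the same key ingredients: transposing the $\{1\}$-condition and invoking the $\{4\}$-condition for (i)$\Rightarrow$(ii), post-multiplying by $(\mc{A}\n\mc{A}^*)^{(1)}\n\mc{A}$ for (ii)$\Rightarrow$(iii), and using the identities $\mc{A}=\mc{A}\n\mc{A}^*\n(\mc{A}\n\mc{A}^*)^{(1)}\n\mc{A}$ and the hermiticity of $\mc{A}^*\n(\mc{A}\n\mc{A}^*)^{(1)}\n\mc{A}$ for (iii)$\Rightarrow$(i). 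If anything, you are more careful than the paper in explicitly citing the earlier results (applied to $\mc{A}^*$) that justify these two auxiliary identities.
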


\begin{proof}
(i) $\Rightarrow $ (ii): From (i), we have $\mc{A} \n \mc{B}\n
\mc{A}=\mc{A}$  and $(\mc{B} \n \mc{A})^*=\mc{B} \n \mc{A}$. Hence
$(\mc{A} \n \mc{B}\n \mc{A})^*=\mc{A}^*$ yields $(\mc{B}\n
\mc{A})^*\n \mc{A}^*= \mc{A}^*$ which implies
 $\mc{B} \n \mc{A}\n \mc{A}^*=\mc{A}^*$.

 (ii) $\Rightarrow $ (iii): Postmultiplying (ii) by $(\mc{A} \n
 \mc{A}^*)^{(1)}\n \mc{A}$, we get (iii).

(iii) $\Rightarrow $ (i): Since $\mc{A}^* \n (\mc{A} \n
\mc{A}^*)^{(1)}\n \mc{A} \n \mc{A}^*=\mc{A}^*$, so we obtain\\
$\mc{A} \n \mc{A}^* \n (\mc{A} \n \mc{A}^*)^{(1)}\n \mc{A} =\mc{A}$
by taking conjugate transpose both sides. Hence $\mc{A}\n \mc{B}\n
\mc{A}=\mc{A}$. Next $(\mc{B}\n \mc{A})^*=(\mc{A}^* \n (\mc{A} \n
 \mc{A}^*)^{(1)}\n \mc{A})^*=\\
 \mc{A}^* \n (\mc{A} \n \mc{A}^*)^{(1)}\n \mc{A}=\mc{B}\n \mc{A}.$
\end{proof}

The matrix analogue is shown next for rectangular matrices of
suitable order.

\begin{cor}
The following three conditions are equivalent:\\
 (i) $B \in A\{1,4\}$.\\
 (ii) $B AA^*=A^*$.\\
 (iii) $B A= A^*(AA^*)^{(1)}A.$
\end{cor}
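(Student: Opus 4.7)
The plan is to mirror the proof of the immediately preceding theorem verbatim, since matrices over $\mathbb{C}$ with ordinary multiplication are the special case $N=1$ of tensors with the Einstein product $*_N$. All the manipulations used in that proof (conjugate transpose, associativity, pre/postmultiplication, and the defining equations for $\{1\}$-, $\{1,4\}$-inverses) are purely algebraic identities that hold equally well in the matrix setting, so no new machinery is needed.

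For (i) $\Rightarrow$ (ii), I would start from the two defining conditions $ABA=A$ and $(BA)^* = BA$. Taking conjugate transpose of $ABA = A$ gives $A^* B^* A^* = A^*$, which rewrites as $(BA)^* A^* = A^*$; using the Hermitian condition then yields $BAA^* = A^*$. For (ii) $\Rightarrow$ (iii), I would simply postmultiply the identity $BAA^* = A^*$ by $(AA^*)^{(1)} A$ to obtain $BA \cdot A^*(AA^*)^{(1)}A = A^*(AA^*)^{(1)}A$, and observe that the left-hand side is $BA$ because $A^*(AA^*)^{(1)}AA^* = A^*$ will be established in the next step (equivalently, one may cite that $AA^*(AA^*)^{(1)}AA^* = AA^*$, take transposes, and collapse). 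Cleaner: since $(AA^*)^{(1)}$ is a $\{1\}$-inverse, $AA^*(AA^*)^{(1)}AA^*=AA^*$; taking conjugate transpose gives $AA^*(AA^*)^{(1\ast)}AA^* = AA^*$, and then the chain $BA = BAA^* (AA^*)^{(1)}A = A^*(AA^*)^{(1)}A$ closes the implication.

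For (iii) $\Rightarrow$ (i), starting from $BA = A^*(AA^*)^{(1)}A$, I would postmultiply by $A^*$ to get $BAA^* = A^*(AA^*)^{(1)}AA^*$, and then use the $\{1\}$-inverse identity $AA^*(AA^*)^{(1)}AA^* = AA^*$ together with a conjugate transpose to deduce $A^*(AA^*)^{(1)}AA^* = A^*$, so $BAA^* = A^*$; premultiplying by the right factor and then multiplying on the right by the appropriate piece, or more directly taking conjugate transpose of $BAA^* = A^*$, delivers $ABA = A$. The Hermitian condition $(BA)^* = BA$ is immediate from (iii) since $A^*(AA^*)^{(1)}A$ is manifestly self-adjoint once we symmetrize $(AA^*)^{(1)}$ (or we invoke the theorem already proved for tensors, applied to the degenerate case $N=1$).

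There is no genuine obstacle: the only thing to be mildly careful about is the $\{1\}$-inverse step, where $(AA^*)^{(1)}$ is not unique and need not itself be Hermitian, so when taking conjugate transposes one implicitly uses that $((AA^*)^{(1)})^*$ is also a $\{1\}$-inverse of $AA^*$ (which follows from Hermiticity of $AA^*$). Once that symmetry observation is in hand the three implications reduce to one-line algebraic manipulations identical to those in the tensor proof.
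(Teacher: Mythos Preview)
Your proposal is correct and matches the paper's approach exactly: the paper gives no separate proof for this corollary, treating it simply as the matrix ($N=1$) specialization of the preceding tensor theorem, which is precisely what you do by mirroring that proof verbatim. Your closing caveat about $((AA^*)^{(1)})^*$ also being a $\{1\}$-inverse of $AA^*$ is the right observation to make the Hermiticity step in (iii)$\Rightarrow$(i) airtight.
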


Sufficient conditions for reverse order law of $\{1,4\}$-inverse of
tensors is presented next.

\begin{thm}\label{2.26}
If $\mc{A}^{(1,4)}\n \mc{A}\n \mc{B}\n \mc{B}^*$ is hermitian, then
 $$(\mc{A} \n \mc{B})^{(1,4)}=\mc{B}^{(1,4)} \n \mc{A}^{(1,4)}.$$
\end{thm}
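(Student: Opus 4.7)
The plan is to verify that $\mc{X} := \mc{B}^{(1,4)}*_N\mc{A}^{(1,4)}$ lies in $(\mc{A}*_N\mc{B})\{1,4\}$ by appealing to the characterization in the theorem immediately preceding this one: a tensor $\mc{X}$ is a $\{1,4\}$-inverse of a tensor $\mc{C}$ if and only if $\mc{X}*_N\mc{C}*_N\mc{C}^*=\mc{C}^*$. So the task reduces to proving the single identity
\[
\mc{B}^{(1,4)}*_N\mc{A}^{(1,4)}*_N(\mc{A}*_N\mc{B})*_N(\mc{A}*_N\mc{B})^* \;=\; (\mc{A}*_N\mc{B})^*,
\]
after which the desired reverse order law follows at once. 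Note that here I only need to verify the $\{1,4\}$ characterization; I do not need to independently check equation $(1)$.

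First I would rewrite $(\mc{A}*_N\mc{B})^* = \mc{B}^**_N\mc{A}^*$ and expand the left-hand side as
\[
\mc{B}^{(1,4)}*_N\bigl(\mc{A}^{(1,4)}*_N\mc{A}*_N\mc{B}*_N\mc{B}^*\bigr)*_N\mc{A}^*.
\]
The key step is to invoke the hypothesis that $\mc{A}^{(1,4)}*_N\mc{A}*_N\mc{B}*_N\mc{B}^*$ is hermitian: taking conjugate transpose and using that $\mc{A}^{(1,4)}*_N\mc{A}$ is itself hermitian (by property $(4)$ of $\mc{A}^{(1,4)}$), I obtain
\[
\mc{A}^{(1,4)}*_N\mc{A}*_N\mc{B}*_N\mc{B}^* \;=\; \mc{B}*_N\mc{B}^**_N\mc{A}^{(1,4)}*_N\mc{A}.
\]
Substituting this commutation into the expression, the left-hand side becomes
\[
\mc{B}^{(1,4)}*_N\mc{B}*_N\mc{B}^**_N\mc{A}^{(1,4)}*_N\mc{A}*_N\mc{A}^*.
\]

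Finally, I would apply the preceding characterization theorem (condition (ii) there) separately to $\mc{A}^{(1,4)}$ and $\mc{B}^{(1,4)}$: we have $\mc{A}^{(1,4)}*_N\mc{A}*_N\mc{A}^* = \mc{A}^*$ and $\mc{B}^{(1,4)}*_N\mc{B}*_N\mc{B}^* = \mc{B}^*$. Collapsing the right-hand factor first yields $\mc{B}^{(1,4)}*_N\mc{B}*_N\mc{B}^**_N\mc{A}^*$, and collapsing the left-hand factor then gives exactly $\mc{B}^**_N\mc{A}^* = (\mc{A}*_N\mc{B})^*$, completing the verification. The only nontrivial move is the hermitian-based commutation; everything else is mechanical bookkeeping using associativity of $*_N$ and the two characterizations already established, so no serious obstacle is anticipated.
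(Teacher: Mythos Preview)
Your proof is correct. Both you and the paper derive the same commutation relation $\mc{A}^{(1,4)}*_N\mc{A}*_N\mc{B}*_N\mc{B}^* = \mc{B}*_N\mc{B}^**_N\mc{A}^{(1,4)}*_N\mc{A}$ from the hermitian hypothesis together with $(\mc{A}^{(1,4)}*_N\mc{A})^*=\mc{A}^{(1,4)}*_N\mc{A}$, so the essential idea is shared.

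The route differs in what is checked afterward. The paper works directly from Definition~\ref{defmpi}, separately verifying condition~(1) (that $\mc{A}*_N\mc{B}*_N\mc{X}*_N\mc{A}*_N\mc{B}=\mc{A}*_N\mc{B}$) and condition~(4) (that $\mc{X}*_N\mc{A}*_N\mc{B}$ is hermitian), with a fairly long string of manipulations for the latter. You instead invoke the equivalence (i)$\Leftrightarrow$(ii) from the preceding theorem, reducing everything to the single identity $\mc{X}*_N\mc{C}*_N\mc{C}^*=\mc{C}^*$ for $\mc{C}=\mc{A}*_N\mc{B}$; this collapses both conditions into one line and lets you reuse the same characterization for $\mc{A}^{(1,4)}$ and $\mc{B}^{(1,4)}$ at the end. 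Your argument is shorter and makes cleaner use of the machinery already in place; the paper's version is more self-contained in that it appeals only to the raw definition.
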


\begin{proof}
The fact $\mc{A}^{(1,4)}\n \mc{A}\n \mc{B}\n \mc{B}^*$ is hermitian
implies
\begin{eqnarray*}
\mc{A}^{(1,4)}\n \mc{A} *_N\mc{B}\n \mc{B}^*
&=& (\mc{A}^{(1,4)}\n \mc{A} *_N\mc{B}\n \mc{B}^*)^*\\
 &=& (\mc{B}\n \mc{B}^*)^* *_N (\mc{A}^{(1,4)} *_N \mc{A})^*\\
 &=& (\mc{B}\n \mc{B}^*) *_N (\mc{A}^{(1,4)} *_N \mc{A}).
\end{eqnarray*}
Hence $\mc{A}^{(1,4)}\n \mc{A}$ and $\mc{B}\n \mc{B}^*$ are
commutative. Let us consider $\mc{X} = \mc{B}^{(1,4)} *_N
\mc{A}^{(1,4)}$ and $\mc{D} = \mc{A} *_N \mc{B}$.
 Then
\begin{eqnarray*}
\mc{D} *_N\mc{X} *_N \mc{D}
&=& \mc{A} *_N\mc{B} *_N \mc{B}^{(1,4)} *_N \mc{A}^{(1,4)} *_N \mc{A} *_N \mc{B}\\
&=& \mc{A} *_N\mc{A}^{(1,4)} *_N \mc{A} *_N \mc{B} *_N\mc{B}^{(1,4)} *_N \mc{B}\\
&=& \mc{A} *_N\mc{B} = \mc{D},
\end{eqnarray*}
since $\mc{A}^{(1,4)} \in \mc{A}\{1\}$ and $\mc{B}^{(1,4)} \in
\mc{B}\{1\}$. Hence  $\mc{X} \in \mc{D}\{1\}$, i.e., $\mc{B}^{(1,4)}
\n \mc{A}^{(1,4)} \in (\mc{A}\n \mc{B})\{1\}.$ Now \begin{eqnarray*}
& &(\mc{B}^{(1,4)} \n \mc{A}^{(1,4)} \n \mc{A}\n
\mc{B})^*\\
  & = &\mc{B}^*\n (\mc{A}^{(1,4)} \n \mc{A})^* \n
(\mc{B}^{(1,4)})^*\\
  & = & \mc{B}^{(1,4)}\n \mc{B} \n \mc{B}^*  \n \mc{A}^{(1,4)} \n \mc{A} \n
(\mc{B}^{(1,4)})^* \\
  & = & \mc{B}^{(1,4)}\n  \mc{A}^{(1,4)} \n \mc{A}\n \mc{B} \n \mc{B}^*  \n
(\mc{B}^{(1,4)})^*,  \\
  & = & \mc{B}^{(1,4)}\n  \mc{A}^{(1,4)} \n \mc{A}\n \mc{B} \n
(\mc{B}^{(1,4)}\n \mc{B})^* \\
  & = & \mc{B}^{(1,4)}\n  \mc{A}^{(1,4)} \n \mc{A}\n \mc{B} \n
\mc{B}^{(1,4)}\n \mc{B} \\
  & = & \mc{B}^{(1,4)}\n  \mc{A}^{(1,4)} \n \mc{A}\n \mc{B}.
  \end{eqnarray*}
  Thus $(\mc{A} \n \mc{B})^{(1,4)}=\mc{B}^{(1,4)} \n \mc{A}^{(1,4)}.$
\end{proof}

The converse of the above theorem is not true, and is shown below
with an example.

\begin{ex}
Let
 $\mc{A} = (a_{ijkl})_{1 \leq i,j,k,l \leq 2}  \in \mathbb{R}^{2\times 2\times 2\times 2}$ and $\mc{B}
 = (b_{ijkl})_{1 \leq i,j,k,l \leq 2}\in \mathbb{R}^{2\times 2\times 2\times 2}$ be two tensors such that
\begin{eqnarray*}
a_{ij11} =
    \begin{pmatrix}
    0 & 0\\
    1 & 0\\
    \end{pmatrix},
a_{ij21} =
    \begin{pmatrix}
    0 & 0\\
    0 & 0\\
    \end{pmatrix},
a_{ij12} =
    \begin{pmatrix}
    0 & -1\\
    0 & 0\\
    \end{pmatrix},
a_{ij22} =
    \begin{pmatrix}
    0 & 1\\
    2 & 0\\
    \end{pmatrix}, ~
\end{eqnarray*}
and
\begin{eqnarray*}
b_{ij11} =
    \begin{pmatrix}
    1 & 0\\
    0 & 1\\
    \end{pmatrix},
b_{ij21} =
    \begin{pmatrix}
    0 & 1\\
    0 & 0\\
    \end{pmatrix},
b_{ij12} =
    \begin{pmatrix}
    0 & -1\\
    0 & 0\\
    \end{pmatrix},
b_{ij22} =
    \begin{pmatrix}
    0 & 0\\
    0 & 1\\
    \end{pmatrix},
\end{eqnarray*}
respectively. Then  $\mc{A}*_N\mc{B} = (c_{ijkl})_{1 \leq i,j,k,l
\leq 2} \in \mathbb{R}^{2\times 2\times 2\times 2}$, $\mc{A}^{(1,4)}
  = (x_{ijkl})_{1 \leq i,j,k,l \leq 2} \in \mathbb{R}^{2\times 2 \times 2\times 2}$, and
$\mc{B}^{(1,4)} = (y_{ijkl})_{1 \leq i,j,k,l \leq 2} \in
\mathbb{R}^{2\times 2 \times 2\times 2}$, where
\begin{eqnarray*}
c_{ij11} =
    \begin{pmatrix}
    0 & 1\\
    3 & 0\\
    \end{pmatrix},
c_{ij21} =
    \begin{pmatrix}
    0 & -1\\
    0 & 0\\
    \end{pmatrix},
c_{ij12} =
    \begin{pmatrix}
    0 & 1\\
    0 & 0\\
    \end{pmatrix},
c_{ij22} =
    \begin{pmatrix}
    0 & 1\\
    2 & 0\\
    \end{pmatrix},
\end{eqnarray*}
\begin{eqnarray*}
x_{ij11} =
    \begin{pmatrix}
    -4 & 1\\
    -1 & 1\\
    \end{pmatrix},
x_{ij21} =
    \begin{pmatrix}
   \frac{1}{3} & \frac{1}{3}\\
    0 & \frac{1}{3}\\
    \end{pmatrix},
x_{ij12} =
    \begin{pmatrix}
    -\frac{1}{3} & -\frac{5}{6}\\
    0 & \frac{1}{6}\\
    \end{pmatrix},
x_{ij22} =
    \begin{pmatrix}
    0 & 1\\
    1 & 1\\
    \end{pmatrix}, ~
\end{eqnarray*}
and
\begin{eqnarray*}
y_{ij11} =
    \begin{pmatrix}
    1 & 0\\
    0 & -1\\
    \end{pmatrix},
y_{ij21} =
    \begin{pmatrix}
    1 & \frac{3}{2}\\
    0 & -\frac{5}{2}\\
    \end{pmatrix},
y_{ij12} =
    \begin{pmatrix}
    0 & -\frac{1}{2}\\
    \frac{1}{2} & 0\\
    \end{pmatrix},
y_{ij22} =
    \begin{pmatrix}
    0 & 0\\
    0 & 1\\
    \end{pmatrix},
\end{eqnarray*}
respectively.  So $ \mc{B}^{(1,4)} *_N \mc{A}^{(1,4)} =
(d_{ijkl})_{1 \leq i,j,k,l \leq 2} \in \mathbb{R}^{2\times 2 \times
2\times 2}$, where
\begin{eqnarray*}
d_{ij11} =
    \begin{pmatrix}
    -5 & -2\\
    \frac{1}{2} & \frac{15}{2}\\
    \end{pmatrix},
d_{ij21} =
    \begin{pmatrix}
    \frac{1}{3} & -\frac{1}{6}\\
    \frac{1}{6} & 0\\
    \end{pmatrix},
d_{ij12} =
    \begin{pmatrix}
    -\frac{1}{3} & \frac{5}{12}\\
    -\frac{5}{12} & \frac{1}{2}\\
    \end{pmatrix},
d_{ij22} =
    \begin{pmatrix}
    1 & 1\\
    \frac{1}{2} & -\frac{3}{2}\\
    \end{pmatrix}.
\end{eqnarray*}
Hence
\begin{eqnarray*}
 (\mc{A}*_N\mc{B})^{(1,4)} = \mc{B}^{(1,4)} *_N \mc{A}^{(1,4)}.
\end{eqnarray*}
However,  $\mc{A}*_N\mc{A}^{(1,4)}*_N\mc{B}^**_N\mc{B}$ is not
hermitian which can be seen from
$\mc{A}*_N\mc{A}^{(1,4)}*_N\mc{B}^**_N\mc{B}= (t_{ijkl})_{1 \leq
i,j,k,l \leq 2} \in \mathbb{R}^{2\times 2 \times 2\times 2}$,  where
\begin{eqnarray*}
t_{ij11} =
    \begin{pmatrix}
    0 & 0\\
    -2 & 0\\
    \end{pmatrix},
t_{ij21} =
    \begin{pmatrix}
    0 & 1\\
    -1 & 0\\
    \end{pmatrix},
t_{ij12} =
    \begin{pmatrix}
    0 & -1\\
    1 & 0\\
    \end{pmatrix},
t_{ij22} =
    \begin{pmatrix}
    0 & 0\\
    0 & 0\\
    \end{pmatrix},
\end{eqnarray*}
and $(\mc{A}*_N\mc{A}^{(1,4)}*_N\mc{B}^**_N\mc{B})^*=
(\overline{t}_{ijkl})_{1 \leq i,j,k,l \leq 2} \in
\mathbb{R}^{2\times 2 \times 2\times 2}$,  where

\begin{eqnarray*}
\overline{t}_{ij11} =
    \begin{pmatrix}
    0 & 0\\
    0 & 0\\
    \end{pmatrix},
\overline{t}_{ij21} =
    \begin{pmatrix}
    0 & 1\\
    -1 & 0\\
    \end{pmatrix},
\overline{t}_{ij12} =
    \begin{pmatrix}
    -2 & -1\\
    1 & 0\\
    \end{pmatrix},
\overline{t}_{ij22} =
    \begin{pmatrix}
    0 & 0\\
    0 & 0\\
    \end{pmatrix}.
\end{eqnarray*}
\end{ex}

As a corollary to Theorem \ref{2.26}, we obtain the following result
for rectangular matrices of suitable order.

\begin{cor}
If $A^{(1,4)}ABB^*$ is hermitian, then
 $$(AB)^{(1,4)}=B^{(1,4)} A^{(1,4)}.$$
\end{cor}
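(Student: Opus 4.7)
The plan is to obtain this corollary essentially for free, as an immediate specialization of Theorem \ref{2.26}. A rectangular matrix $A \in \mathbb{C}^{m \times n}$ is the special case of an order-$2N$ tensor in $\mathbb{C}^{I_1\times\cdots\times I_N\times J_1\times\cdots\times J_N}$ with $N=1$, $I_1=m$, $J_1=n$. Under this identification, the Einstein product $*_N$ reduces to ordinary matrix multiplication, the tensor conjugate transpose $\mc{A}^*$ reduces to the matrix conjugate transpose $A^*$, the Hermitian property of a tensor reduces to the Hermitian property of a matrix, and the $\{1,4\}$-inverse of a tensor coincides with the classical $\{1,4\}$-inverse of a matrix (since the defining equations $(1)$ and $(4)$ of Definition \ref{defmpi} become the familiar matrix equations $AXA=A$ and $(XA)^*=XA$).

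Under these translations, the hypothesis of the corollary — that $A^{(1,4)}ABB^*$ is Hermitian — corresponds exactly to the hypothesis of Theorem \ref{2.26} that $\mc{A}^{(1,4)}\n\mc{A}\n\mc{B}\n\mc{B}^*$ is Hermitian. Applying Theorem \ref{2.26} then yields $(AB)^{(1,4)} = B^{(1,4)}A^{(1,4)}$, which is the desired conclusion.

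For completeness, one could instead write out the proof directly in matrix language, following the same two-step scheme used for tensors: first observe that Hermiticity of $A^{(1,4)}ABB^*$ forces $A^{(1,4)}A$ and $BB^*$ to commute; then set $X = B^{(1,4)}A^{(1,4)}$ and verify the $\{1\}$-condition $ABXAB=AB$ by rearranging factors using this commutativity, and verify the $\{4\}$-condition $(XAB)^*=XAB$ by using $(B^{(1,4)}B)^* = B^{(1,4)}B$ together with the commutativity to pull $A^{(1,4)}A$ past $BB^*$. There is no genuine obstacle here: the only substantive content is already contained in Theorem \ref{2.26}, and the corollary is a straightforward dimensional reduction.
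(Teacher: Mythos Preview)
Your proposal is correct and matches the paper's approach exactly: the paper presents this result explicitly ``as a corollary to Theorem \ref{2.26}'' for rectangular matrices, with no separate proof given, so the intended argument is precisely the $N=1$ specialization you describe.
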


 Similarly, one can have the
following results for $\{1,3\}$-inverse of $\mc{A}.$

\begin{thm}
The following three conditions are equivalent:\\
 (i) $\mc{B} \in \mc{A}\{1,3\}$.\\
 (ii) $ \mc{A}^* \n \mc{A}\n \mc{B} =\mc{A}^*$.\\
 (iii) $\mc{A} \n \mc{B}= \mc{A}\n (\mc{A}^*\n \mc{A})^{(1)}\n
 \mc{A}^*.$
\end{thm}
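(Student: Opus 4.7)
The plan is to follow the same three-step cyclic implication structure $(i) \Rightarrow (ii) \Rightarrow (iii) \Rightarrow (i)$ used in the preceding dual theorem on $\{1,4\}$-inverses, with the roles of pre- and post-multiplication swapped and with $\mc{A}^*\n \mc{A}$ playing the role that $\mc{A}\n \mc{A}^*$ played there. The main tool beyond the definitions will be Corollary~2.13, in particular the identity $\mc{A}= \mc{A}\n (\mc{A}^*\n \mc{A})^{(1)}\n (\mc{A}^*\n \mc{A})$.

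For $(i)\Rightarrow(ii)$, I would start from $\mc{A}\n \mc{B}\n \mc{A}=\mc{A}$, take the conjugate transpose to obtain $\mc{A}^*\n (\mc{A}\n \mc{B})^* = \mc{A}^*$, and then invoke the hermiticity $(\mc{A}\n \mc{B})^*=\mc{A}\n \mc{B}$ to conclude $\mc{A}^*\n \mc{A}\n \mc{B}=\mc{A}^*$.

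For $(ii)\Rightarrow(iii)$, I would premultiply $\mc{A}^*\n \mc{A}\n \mc{B}=\mc{A}^*$ by $\mc{A}\n (\mc{A}^*\n \mc{A})^{(1)}$. The left-hand side becomes $\mc{A}\n (\mc{A}^*\n \mc{A})^{(1)}\n \mc{A}^*\n \mc{A}\n \mc{B}$, which collapses to $\mc{A}\n \mc{B}$ by Corollary~2.13(i), while the right-hand side is exactly $\mc{A}\n (\mc{A}^*\n \mc{A})^{(1)}\n \mc{A}^*$.

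For $(iii)\Rightarrow(i)$, I would postmultiply (iii) by $\mc{A}$ to get $\mc{A}\n \mc{B}\n \mc{A} = \mc{A}\n (\mc{A}^*\n \mc{A})^{(1)}\n \mc{A}^*\n \mc{A} = \mc{A}$, again by Corollary~2.13(i); this verifies the Moore--Penrose equation~(1). The step I expect to be the main obstacle is verifying equation~(3), namely that $\mc{A}\n \mc{B}$ is hermitian. For this I plan to appeal to the earlier theorem of this section on the equivalences $(i)$--$(iii)$ for $\mc{A}\{1\}$: Corollary~2.13(i) is precisely condition~(ii) of that theorem, so condition~(iii) of that theorem gives that $\mc{A}\n (\mc{A}^*\n \mc{A})^{(1)}\n \mc{A}^*$ is hermitian. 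Combining this with our assumption (iii) yields $(\mc{A}\n \mc{B})^* = \mc{A}\n \mc{B}$, completing the argument.
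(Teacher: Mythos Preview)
Your proposal is correct and follows essentially the same approach as the paper: the paper does not give an explicit proof of this theorem, but simply states that it is obtained ``similarly'' to the preceding $\{1,4\}$ result, and your cyclic argument $(i)\Rightarrow(ii)\Rightarrow(iii)\Rightarrow(i)$ is precisely the dualization of that proof, with the references to Corollary~2.13 and to the earlier equivalence theorem supplying the identities that the paper uses implicitly.
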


\begin{thm}\label{2.29}
If $\mc{A}\n \mc{A}^{(1,3)}\n  \mc{B}^*\n \mc{B}$ is hermitian, then
 $$(\mc{A} \n \mc{B})^{(1,3)}=\mc{B}^{(1,3)} \n \mc{A}^{(1,3)}.$$
\end{thm}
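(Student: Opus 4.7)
The plan is to mirror the proof of Theorem 2.26 step by step, with the roles of the factors $\mc{A}\n\mc{A}^{(1,3)}$ and $\mc{A}^{(1,4)}\n\mc{A}$ (and correspondingly $\mc{B}^*\n\mc{B}$ versus $\mc{B}\n\mc{B}^*$) interchanged. The first step is to convert the hermiticity hypothesis into a commutation relation. Because $\mc{A}^{(1,3)}\in \mc{A}\{3\}$, the tensor $\mc{A}\n\mc{A}^{(1,3)}$ is hermitian, so
\begin{equation*}
\mc{A}\n\mc{A}^{(1,3)}\n\mc{B}^*\n\mc{B}
=\bigl(\mc{A}\n\mc{A}^{(1,3)}\n\mc{B}^*\n\mc{B}\bigr)^{*}
=\mc{B}^*\n\mc{B}\n\bigl(\mc{A}\n\mc{A}^{(1,3)}\bigr)^{*}
=\mc{B}^*\n\mc{B}\n\mc{A}\n\mc{A}^{(1,3)},
\end{equation*}
i.e.\ $\mc{A}\n\mc{A}^{(1,3)}$ and $\mc{B}^*\n\mc{B}$ commute.

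The second step is to take the candidate $\mc{X}=\mc{B}^{(1,3)}\n\mc{A}^{(1,3)}$ together with $\mc{D}=\mc{A}\n\mc{B}$ and verify conditions (1) and (3) of Definition \ref{defmpi}. The cleanest route is via the characterization stated just before Theorem \ref{2.29}: $\mc{X}\in (\mc{A}\n\mc{B})\{1,3\}$ if and only if $(\mc{A}\n\mc{B})^{*}\n(\mc{A}\n\mc{B})\n\mc{X}=(\mc{A}\n\mc{B})^{*}$. So I would compute
\begin{equation*}
\mc{B}^*\n\mc{A}^*\n\mc{A}\n\mc{B}\n\mc{B}^{(1,3)}\n\mc{A}^{(1,3)}
\end{equation*}
and try to reduce it to $\mc{B}^*\n\mc{A}^*$. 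Premultiplying the commutation relation above by $\mc{A}^*$ and invoking the $\{1,3\}$-characterization $\mc{A}^*\n\mc{A}\n\mc{A}^{(1,3)}=\mc{A}^*$ yields $\mc{A}^*\n\mc{B}^*\n\mc{B}=\mc{A}^*\n\mc{B}^*\n\mc{B}\n\mc{A}\n\mc{A}^{(1,3)}$; taking the conjugate transpose gives the companion identity $\mc{A}\n\mc{A}^{(1,3)}\n\mc{B}^*\n\mc{B}\n\mc{A}=\mc{B}^*\n\mc{B}\n\mc{A}$. With these two forms of the commutation relation, together with $\mc{A}^*\n\mc{A}\n\mc{A}^{(1,3)}=\mc{A}^*$ and $\mc{B}^*\n\mc{B}\n\mc{B}^{(1,3)}=\mc{B}^*$, I can push the projector $\mc{A}\n\mc{A}^{(1,3)}$ through the product and collapse it to $\mc{B}^*\n\mc{A}^*$.

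For the $\{1\}$-condition itself, the natural parallel of the computation in the proof of Theorem \ref{2.26} is to rewrite
\begin{equation*}
\mc{A}\n\mc{B}\n\mc{B}^{(1,3)}\n\mc{A}^{(1,3)}\n\mc{A}\n\mc{B}
\end{equation*}
by inserting $\mc{A}=\mc{A}\n\mc{A}^{(1,3)}\n\mc{A}$ and exploiting the derived commutativity to swap $\mc{A}\n\mc{A}^{(1,3)}$ past $\mc{B}^*\n\mc{B}$-type blocks, eventually collapsing via $\mc{A}\n\mc{A}^{(1,3)}\n\mc{A}=\mc{A}$ and $\mc{B}\n\mc{B}^{(1,3)}\n\mc{B}=\mc{B}$. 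For the $\{3\}$-condition I would expand
\begin{equation*}
\bigl(\mc{A}\n\mc{B}\n\mc{B}^{(1,3)}\n\mc{A}^{(1,3)}\bigr)^{*}
=(\mc{A}^{(1,3)})^{*}\n\mc{B}\n\mc{B}^{(1,3)}\n\mc{A}^{*},
\end{equation*}
using the hermiticity of $\mc{B}\n\mc{B}^{(1,3)}$, and then rewrite $(\mc{A}^{(1,3)})^{*}\n\mc{A}^{*}=\mc{A}\n\mc{A}^{(1,3)}$ at the outer positions by inserting $\mc{A}^{*}\n\mc{A}\n\mc{A}^{(1,3)}=\mc{A}^{*}$ on the right.

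The main obstacle is precisely this last manipulation: the hypothesis gives commutativity of $\mc{A}\n\mc{A}^{(1,3)}$ with $\mc{B}^*\n\mc{B}$, while the expression to be simplified naturally contains $\mc{A}^*\n\mc{A}$ and $\mc{B}\n\mc{B}^{(1,3)}$ instead. Bridging this mismatch via the auxiliary identities $\mc{A}^*\n\mc{A}\n\mc{A}^{(1,3)}=\mc{A}^*$ and $\mc{B}^*\n\mc{B}\n\mc{B}^{(1,3)}=\mc{B}^*$ (which let me convert between the two projector-type factors) is the only nontrivial step; everything else is bookkeeping analogous to the proof of Theorem \ref{2.26}.
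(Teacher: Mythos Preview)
Your plan to mirror the proof of Theorem~\ref{2.26} is exactly what the paper intends by ``Similarly,'' and your first step---extracting the commutation relation
\[
\mc{A}\n\mc{A}^{(1,3)}\n\mc{B}^*\n\mc{B}=\mc{B}^*\n\mc{B}\n\mc{A}\n\mc{A}^{(1,3)}
\]
from the hermiticity hypothesis---is correct. You also correctly isolate the crux: the computation for conditions~(1) and~(3) naturally produces the factors $\mc{A}^*\n\mc{A}$ and $\mc{B}\n\mc{B}^{(1,3)}$, while the hypothesis only controls $\mc{A}\n\mc{A}^{(1,3)}$ and $\mc{B}^*\n\mc{B}$.

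The gap is that this mismatch \emph{cannot} be bridged by the auxiliary identities you list; it is not bookkeeping but a genuine obstruction, and in fact the statement as printed is false. In the $2\times 2$ matrix case take
\[
A=A^{(1,3)}=\begin{pmatrix}1&0\\0&0\end{pmatrix},\qquad
B=\begin{pmatrix}1&0\\1&0\end{pmatrix},\qquad
B^{(1,3)}=B^{\dagger}=\tfrac12\begin{pmatrix}1&1\\0&0\end{pmatrix}.
\]
Then $AA^{(1,3)}B^*B=\begin{pmatrix}2&0\\0&0\end{pmatrix}$ is hermitian, yet $AB=\begin{pmatrix}1&0\\0&0\end{pmatrix}$ and $B^{(1,3)}A^{(1,3)}=\tfrac12\begin{pmatrix}1&0\\0&0\end{pmatrix}$, so $(AB)(B^{(1,3)}A^{(1,3)})(AB)=\tfrac12 AB\neq AB$ and condition~(1) already fails.

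The true dual of Theorem~\ref{2.26} is obtained by applying that theorem to $\mc{B}^*$ and $\mc{A}^*$ (using $(\mc{C}^{(1,3)})^*\in\mc{C}^*\{1,4\}$); the hypothesis one gets is that $\mc{B}\n\mc{B}^{(1,3)}\n\mc{A}^*\n\mc{A}$ be hermitian, i.e.\ $\mc{B}\n\mc{B}^{(1,3)}$ commutes with $\mc{A}^*\n\mc{A}$. With \emph{that} hypothesis your outlined argument works cleanly: writing $\mc{A}=(\mc{A}^{(1,3)})^*\n\mc{A}^*\n\mc{A}$ lets you push $\mc{A}^*\n\mc{A}$ past $\mc{B}\n\mc{B}^{(1,3)}$, after which the identities $\mc{A}^*\n\mc{A}\n\mc{A}^{(1,3)}=\mc{A}^*$ and $\mc{B}\n\mc{B}^{(1,3)}\n\mc{B}=\mc{B}$ collapse everything to $\mc{A}\n\mc{B}$ for condition~(1) and give the hermitian form $(\mc{A}^{(1,3)})^*\n\mc{B}\n\mc{B}^{(1,3)}\n\mc{A}^*$ for condition~(3). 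The hypothesis printed in the paper appears to be a misplaced dualisation.
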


The following example shows that the converse of the above result is
not true.

\begin{ex}
Let
 $\mc{A} = (a_{ijkl})_{1 \leq i,j,k,l \leq 2}  \in \mathbb{R}^{2\times 2\times 2\times 2}$ and $\mc{B}
 = (b_{ijkl})_{1 \leq i,j,k,l \leq 2}\in \mathbb{R}^{2\times 2\times 2\times 2}$
 be two tensors such that
\begin{eqnarray*}
a_{ij11} =
    \begin{pmatrix}
    1 & 2\\
    0 & 0\\
    \end{pmatrix},
a_{ij21} =
    \begin{pmatrix}
    1 & 0\\
    0 & 0\\
    \end{pmatrix},
a_{ij12} =
    \begin{pmatrix}
    0 & 0\\
    0 & 1\\
    \end{pmatrix},
a_{ij22} =
    \begin{pmatrix}
    -1 & 0\\
    0 & 0\\
    \end{pmatrix}, ~
\end{eqnarray*}
and
\begin{eqnarray*}
b_{ij11} =
    \begin{pmatrix}
    0 & 0\\
    1 & 0\\
    \end{pmatrix},
b_{ij21} =
    \begin{pmatrix}
    0 & 0\\
    1 & -1\\
    \end{pmatrix},
b_{ij12} =
    \begin{pmatrix}
    1 & 0\\
    0 & 0\\
    \end{pmatrix},
b_{ij22} =
    \begin{pmatrix}
    0 & 0\\
    0 & 1\\
    \end{pmatrix},
\end{eqnarray*}
respectively. Then  $\mc{A}*_N\mc{B}  = (c_{ijkl})_{1 \leq i,j,k,l
\leq 2}  \in \mathbb{R}^{2\times 2\times 2\times 2}$,
 $\mc{A}^{(1,3)}= (x_{ijkl})_{1 \leq i,j,k,l \leq 2} \in
\mathbb{R}^{2\times 2 \times 2\times 2}$, and $\mc{B}^{(1,3)} =
(y_{ijkl})_{1 \leq i,j,k,l \leq 2} \in \mathbb{R}^{2\times 2 \times
2\times 2}$, where
\begin{eqnarray*}
c_{ij11} =
    \begin{pmatrix}
    1 & 0\\
    0 & 0\\
    \end{pmatrix},
c_{ij21} =
    \begin{pmatrix}
    2 & 0\\
    0 & 0\\
    \end{pmatrix},
c_{ij12} =
    \begin{pmatrix}
    1 & 2\\
    0 & 0\\
    \end{pmatrix},
c_{ij22} =
    \begin{pmatrix}
   -1 & 0\\
    0 & 0\\
    \end{pmatrix}, ~
\end{eqnarray*}
\begin{eqnarray*}
x_{ij11} =
    \begin{pmatrix}
    0 & 0\\
    1 & 0\\
    \end{pmatrix},
x_{ij21} =
    \begin{pmatrix}
    0 & 0\\
    1 & 1\\
    \end{pmatrix},
x_{ij12} =
    \begin{pmatrix}
    \frac{1}{2} & 0\\
    1 & \frac{3}{2}\\
    \end{pmatrix},
x_{ij22} =
    \begin{pmatrix}
    0 & 1\\
    1 & 1\\
    \end{pmatrix}, ~
\end{eqnarray*}
and
\begin{eqnarray*}
y_{ij11} =
    \begin{pmatrix}
    -2 & 1\\
    2 & 2\\
    \end{pmatrix},
y_{ij21} =
    \begin{pmatrix}
    0 & 0\\
    1 & 1\\
    \end{pmatrix},
y_{ij12} =
    \begin{pmatrix}
    -1 & 0\\
    1 & 1\\
    \end{pmatrix},
y_{ij22} =
    \begin{pmatrix}
    -1 & 0\\
    1 & 2\\
    \end{pmatrix},
\end{eqnarray*}
respectively.  So $ \mc{B}^{(1,3)} *_N \mc{A}^{(1,3)} =
(d_{ijkl})_{1 \leq i,j,k,l \leq 2} \in \mathbb{R}^{2\times 2 \times
2\times 2}$, where
\begin{eqnarray*}
d_{ij11} =
    \begin{pmatrix}
    0 & 0\\
    1 & 1\\
    \end{pmatrix},
d_{ij21} =
    \begin{pmatrix}
    -1 & 0\\
    2 & 3\\
    \end{pmatrix},
d_{ij12} =
    \begin{pmatrix}
    -\frac{5}{2} & \frac{1}{2}\\
    \frac{7}{2} & 5\\
    \end{pmatrix},
d_{ij22} =
    \begin{pmatrix}
    -2 & 0\\
    3 & 4\\
    \end{pmatrix}.
\end{eqnarray*}
Hence
\begin{eqnarray*}
 (\mc{A}*_N\mc{B})^{(1,3)} = \mc{B}^{(1,3)} *_N \mc{A}^{(1,3)},
\end{eqnarray*}
But $\mc{A}*_N\mc{A}^{(1,3)}*_N\mc{B}^**_N\mc{B}$ is not hermitian
which can be seen from $\mc{A}*_N\mc{A}^{(1,3)}*_N\mc{B}^**_N\mc{B}
 = (t_{ijkl})_{1 \leq i,j,k,l \leq 2} \in
\mathbb{R}^{2\times 2 \times 2\times 2}$, where
\begin{eqnarray*}
t_{ij11} =
    \begin{pmatrix}
    1 & 0\\
    0 & 0\\
    \end{pmatrix},
t_{ij21} =
    \begin{pmatrix}
    0 & 1\\
    0 & 0\\
    \end{pmatrix},
t_{ij12} =
    \begin{pmatrix}
    1 & 0\\
    0 & -1\\
    \end{pmatrix},
t_{ij22} =
    \begin{pmatrix}
    0 & 0\\
    0 & 1\\
    \end{pmatrix},
\end{eqnarray*}
and $(\mc{A}*_N\mc{A}^{(1,3)}*_N\mc{B}^**_N\mc{B})^*
 = (\overline{t}_{ijkl})_{1 \leq i,j,k,l \leq 2} \in
\mathbb{R}^{2\times 2 \times 2\times 2}$, where

\begin{eqnarray*}
\overline{t}_{ij11} =
    \begin{pmatrix}
    1 & 0\\
    1 & 0\\
    \end{pmatrix},
\overline{t}_{ij21} =
    \begin{pmatrix}
    0 & 1\\
    0 & 0\\
    \end{pmatrix},
\overline{t}_{ij12} =
    \begin{pmatrix}
    0 & 0\\
    0 & 0\\
    \end{pmatrix},
\overline{t}_{ij22} =
    \begin{pmatrix}
    0 & 0\\
    -1 & 1\\
    \end{pmatrix}.
\end{eqnarray*}
\end{ex}

%
%
%
%

We now present one of our important results, which yields a method
of construction of the Moore-Penrose inverse of a tensor using $\{
1,3\}$-inverse and $\{ 1,4\}$-inverse of $\mc{A}$. One can find the
matrix version of these results  in \cite{BenGr03}.

\begin{thm}
Let $\mc{A}\in \mathbb{C}^{I_1\times\cdots\times I_N \times J_1
\times\cdots\times J_N }$. Then
\begin{equation*}
\mc{A}^{\dg} =\mc{A}^{(1,4)}*_N\mc{A}*_N\mc{A}^{(1,3)}.
\end{equation*}
\end{thm}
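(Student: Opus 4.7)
The plan is to set $\mc{X} = \mc{A}^{(1,4)} *_N \mc{A} *_N \mc{A}^{(1,3)}$ and verify directly that $\mc{X}$ satisfies the four defining equations (1)--(4) of Definition \ref{defmpi}. Since the Moore-Penrose inverse is unique (Theorem 3.2 of \cite{LizBaCY15}), this will force $\mc{X} = \mc{A}^{\dg}$.

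The key mechanism I would exploit is the following: because $\mc{A}^{(1,3)}$ and $\mc{A}^{(1,4)}$ each satisfy condition (1), the ``middle'' copy of $\mc{A}$ in the product $\mc{X}$ collapses against either neighbor whenever $\mc{X}$ is multiplied by $\mc{A}$. Concretely, for condition (3), I expect the simplification
\begin{equation*}
\mc{A} *_N \mc{X} = \mc{A} *_N \mc{A}^{(1,4)} *_N \mc{A} *_N \mc{A}^{(1,3)} = \mc{A} *_N \mc{A}^{(1,3)},
\end{equation*}
after which hermiticity is immediate from property (3) of $\mc{A}^{(1,3)}$. Symmetrically, for condition (4), I would use property (1) of $\mc{A}^{(1,3)}$ to collapse the interior block and obtain $\mc{X} *_N \mc{A} = \mc{A}^{(1,4)} *_N \mc{A}$, whose hermiticity comes from property (4) of $\mc{A}^{(1,4)}$.

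With these two identities in hand, conditions (1) and (2) become routine. For (1), $\mc{A} *_N \mc{X} *_N \mc{A} = (\mc{A} *_N \mc{A}^{(1,3)}) *_N \mc{A} = \mc{A}$ by property (1) of $\mc{A}^{(1,3)}$. For (2), applying the two collapse identities in succession gives
\begin{equation*}
\mc{X} *_N \mc{A} *_N \mc{X} = \mc{A}^{(1,4)} *_N \mc{A} *_N \mc{A}^{(1,3)} = \mc{X}.
\end{equation*}

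I do not foresee any genuine obstacle: the whole argument rests only on associativity of $*_N$ and on the defining properties of $\mc{A}^{(1,3)}$ and $\mc{A}^{(1,4)}$, which are already available. The only bookkeeping point to keep straight is that, in both collapse steps, the bracketing must expose the subproduct $\mc{A} *_N \mc{A}^{(1,3)} *_N \mc{A}$ (resp.\ $\mc{A} *_N \mc{A}^{(1,4)} *_N \mc{A}$) so that condition (1) can be applied; once that is done, the four verifications fall out in a few lines each.
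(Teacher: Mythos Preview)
Your proposal is correct and follows essentially the same approach as the paper: set $\mc{X} = \mc{A}^{(1,4)} *_N \mc{A} *_N \mc{A}^{(1,3)}$, use property (1) of each generalized inverse to collapse the interior blocks and obtain $\mc{A} *_N \mc{X} = \mc{A} *_N \mc{A}^{(1,3)}$ and $\mc{X} *_N \mc{A} = \mc{A}^{(1,4)} *_N \mc{A}$, and then read off the four Moore-Penrose conditions. The paper also notes, as an aside, that conditions (1) and (2) can alternatively be obtained from Lemma~\ref{12}.
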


\begin{proof}
Suppose that $\mc{X}=\mc{A}^{(1,4)}*_N\mc{A}*_N\mc{A}^{(1,3)}.$ Then
\begin{eqnarray*}
\mc{A}*_N\mc{X}*_N\mc{A}
&=&\mc{A}*_N(\mc{A}^{(1,4)}*_N\mc{A}*_N\mc{A}^{(1,3)})*_N\mc{A}=
\mc{A}*_N\mc{A}^{(1,3)}*_N\mc{A}=\mc{A},\\
\mc{X}*_N\mc{A}*_N\mc{X} &=&(\mc{A}^{(1,4)}*_N\mc{A}*_N\mc{A}^{(1,3)})
*_N\mc{A}*_N(\mc{A}^{(1,4)}*_N\mc{A}*_N\mc{A}^{(1,3)})\\
&=&\mc{A}^{(1,4)}*_N(\mc{A}*_N\mc{A}^{(1,3)}
*_N\mc{A})*_N\mc{A}^{(1,4)}*_N\mc{A}*_N\mc{A}^{(1,3)}\\
&=&\mc{A}^{(1,4)}*_N\mc{A}*_N\mc{A}^{(1,4)}*_N\mc{A}
*_N\mc{A}^{(1,3)}\\
&=&\mc{A}^{(1,4)}*_N(\mc{A}*_N\mc{A}^{(1,4)}*_N\mc{A})
*_N\mc{A}^{(1,3)}\\
&=&\mc{A}^{(1,4)}*_N\mc{A}*_N\mc{A}^{(1,3)}=\mc{X},\\
 (\mc{A}\n
\mc{X})^*
&=& \left(\mc{A}*_N(\mc{A}^{(1,4)}*_N\mc{A}*_N\mc{A}^{(1,3)})\right)^*\\
&=& \left((\mc{A}*_N\mc{A}^{(1,4)}*_N\mc{A})*_N\mc{A}^{(1,3)}\right)^*\\
&=& \left(\mc{A}*_N\mc{A}^{(1,3)}\right)^*\\
&=& \mc{A}*_N\mc{A}^{(1,3)}\\
&=& \mc{A}*_N \mc{A}^{(1,4)}*_N\mc{A}*_N\mc{A}^{(1,3)}\\
&=& \mc{A} \n \mc{X}, \\
\end{eqnarray*}
and
\begin{eqnarray*}
(\mc{X}\n \mc{A})^*
&=& \left((\mc{A}^{(1,4)}*_N\mc{A}*_N\mc{A}^{(1,3)})*_N\mc{A}\right)^*\\
&=& \left(\mc{A}^{(1,4)}*_N(\mc{A}*_N\mc{A}^{(1,3)}*_N\mc{A}\right)^*\\
&=& \left(\mc{A}^{(1,4)}*_N\mc{A}\right)^*\\
&=& \mc{A}^{(1,4)}*_N(\mc{A}*_N\mc{A}^{(1,3)}*_N\mc{A})\\
&=& (\mc{A}^{(1,4)}*_N\mc{A}*_N\mc{A}^{(1,3)})*_N\mc{A} = \mc{X}\n
\mc{A}.
\end{eqnarray*}
Hence $\mc{X} = \mc{A}^{\dg}.$
\end{proof}

We remark that the first two conditions also follow from Lemma
\ref{12}. The Moore-Penrose inverse of Kronecker product of two
tensors can be computed using the next result.

\begin{thm}
Let $\mc{A} \in \mathbb{C}^{I_1\times\cdots\times I_N \times J_1
\times\cdots\times J_N} $ and $\mc{B} \in
\mathbb{C}^{K_1\times\cdots\times K_M \times L_1 \times\cdots\times
L_M} $. Then $$(\mc{A}\otimes\mc{B})^{\dg} =
\mc{A}^{\dg}\otimes\mc{B}^{\dg}.$$
\end{thm}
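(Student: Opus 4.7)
The plan is to verify directly that $\mc{X} := \mc{A}^{\dg}\otimes\mc{B}^{\dg}$ satisfies the four Moore--Penrose equations of Definition~\ref{defmpi} for the tensor $\mc{A}\otimes\mc{B}$; uniqueness of the Moore--Penrose inverse (Theorem 3.2 of \cite{LizBaCY15}) will then force $\mc{X} = (\mc{A}\otimes\mc{B})^{\dg}$. The two tools driving every step are the mixed product rule of Lemma~\ref{kp}, $(\mc{A}\otimes\mc{B})*_M(\mc{C}\otimes\mc{D}) = (\mc{A}*_N\mc{C})\otimes(\mc{B}*_M\mc{D})$, and the conjugate transpose identity $(\mc{A}\otimes\mc{B})^* = \mc{A}^*\otimes\mc{B}^*$ from Proposition~\ref{Krop}(a). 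Note that $\mc{A}^\dg\otimes\mc{B}^\dg$ has exactly the shape required to play the role of $\mc{C}\otimes\mc{D}$ in Lemma~\ref{kp}, so the rule applies at each stage.

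For conditions (1) and (2) I would apply the mixed product rule twice in succession:
\begin{eqnarray*}
(\mc{A}\otimes\mc{B})*_M\mc{X}*_M(\mc{A}\otimes\mc{B})
&=& (\mc{A}*_N\mc{A}^{\dg}*_N\mc{A})\otimes(\mc{B}*_M\mc{B}^{\dg}*_M\mc{B})\\
&=& \mc{A}\otimes\mc{B},
\end{eqnarray*}
invoking the Moore--Penrose property (1) for $\mc{A}$ and $\mc{B}$ separately. Exchanging the roles of the original tensors with their Moore--Penrose inverses in the same computation yields $\mc{X}*_M(\mc{A}\otimes\mc{B})*_M\mc{X} = \mc{A}^{\dg}\otimes\mc{B}^{\dg}=\mc{X}$.

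For conditions (3) and (4) I would combine the mixed product rule with Proposition~\ref{Krop}(a):
\begin{eqnarray*}
((\mc{A}\otimes\mc{B})*_M\mc{X})^*
&=& ((\mc{A}*_N\mc{A}^{\dg})\otimes(\mc{B}*_M\mc{B}^{\dg}))^*\\
&=& (\mc{A}*_N\mc{A}^{\dg})^*\otimes(\mc{B}*_M\mc{B}^{\dg})^*\\
&=& (\mc{A}*_N\mc{A}^{\dg})\otimes(\mc{B}*_M\mc{B}^{\dg})\\
&=& (\mc{A}\otimes\mc{B})*_M\mc{X},
\end{eqnarray*}
where the third equality is the Hermitian-ness of $\mc{A}*_N\mc{A}^{\dg}$ and $\mc{B}*_M\mc{B}^{\dg}$ given by property (3) applied to $\mc{A}$ and to $\mc{B}$. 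The verification of (4) for $\mc{X}*_M(\mc{A}\otimes\mc{B})$ is word-for-word the same using property (4).

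I do not anticipate a substantive obstacle here; the argument is a four-line bookkeeping exercise. The only care required is in keeping track of the two orders $N$ and $M$ of the Einstein products so that every application of Lemma~\ref{kp} refers to factors of the correct shape — in particular, the outer product in $(\mc{A}\otimes\mc{B})*_M(\cdot)$ is contracted in $M$ slots, while the inner contractions inside each Kronecker factor split into $N$ slots (on the $\mc{A}$ side) and $M$ slots (on the $\mc{B}$ side).
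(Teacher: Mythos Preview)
Your proposal is correct and follows essentially the same approach as the paper: both set $\mc{X}=\mc{A}^{\dg}\otimes\mc{B}^{\dg}$ and verify the four Moore--Penrose equations by combining the mixed product rule of Lemma~\ref{kp} with the conjugate-transpose identity of Proposition~\ref{Krop}(a). If anything, your write-up is slightly more careful than the paper's in distinguishing the $*_M$ and $*_N$ contractions.
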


\begin{proof}
Suppose that $\mc{K} = \mc{A}\otimes\mc{B}$ and $\mc{X} =
\mc{A}^{\dg}\otimes\mc{B}^{\dg}$. We now have $\mc{K}*_N\mc{X} =
(\mc{A}\otimes\mc{B})*_N(\mc{A}^{\dg}\otimes\mc{B}^{\dg}) =
\mc{A}*_N\mc{A}^{\dg} \otimes\mc{B}*_N\mc{B}^{\dg}$ by Lemma
\ref{kp}. So
$\mc{K}*_N\mc{X}*_N\mc{K}=\mc{A}*_N\mc{A}^{\dg}*_N\mc{A}*_N \otimes
\mc{B}*_N\mc{B}^{\dg}*_N\mc{B} = \mc{A}\otimes\mc{B}=\mc{K}$, and
$\mc{K}*_N\mc{X}$ is hermitian by Proposition \ref{Krop} (a).
Similarly, the other two conditions can be shown.
\end{proof}

\section{Multilinear system}
Sylvester  matrix equation plays significant roles in system and
control theory \cite{CheLu12, QiuCh04, LizBaCY15}. One can  compute
exact solution of such an equation by using the Kronecker product,
but the computational efforts rapidly increase with the dimensions
of the matrices to be solved \cite{CheLu12}. The Sylvester tensor
 equation  via the Einstein product can be written in the following way:
\begin{equation}
\mc{A}*_N\mc{X} + \mc{X}*_M\mc{B} = \mc{D},
\end{equation}
where $ \mc{A} \in \mathbb{R}^{I_1\times\cdots\times I_N \times I_1
\times\cdots\times I_N}, ~\mc{X} \in
\mathbb{R}^{I_1\times\cdots\times I_N \times J_1 \times\cdots\times
J_M}, ~\mc{B} \in \mathbb{R}^{J_1\times\cdots\times J_M \times J_1
\times\cdots\times J_M}$, and $ \mc{D} \in
\mathbb{R}^{I_1\times\cdots\times I_N \times J_1 \times\cdots\times
J_M}$. This appears in the finite element method \cite{Gra04},
finite difference or spectral method \cite{LiSu09, LiTiSM10}, and
plays an important role in discretization of a linear partial
differential equation in high dimension. Further, based on the
operations of `block tensors', one can write block tensor equation
as:

\begin{equation}
\left[
\begin{array}{cc} \mc{A} & \mc{I}_1
\end{array}
\right] *_N
\begin{bmatrix}
 \mc{X} & \mc{O} \\ \mc{O} & \mc{X}
\end{bmatrix} *_N
\left[
\begin{array}{c} \mc{I}_2 \\ \mc{B}
\end{array}
\right] = \mc{D},
\end{equation}
where $\mc{I}_1 = \mathbb{R}^{I_1\times\cdots\times I_N \times I_1
\times\cdots\times I_N}$ and $\mc{I}_2 =
\mathbb{R}^{J_1\times\cdots\times J_M \times J_1 \times\cdots\times
J_M}$ are unit tensors. Equivalently, we have

\begin{equation}\label{system41}
\mc{A}*_N\mc{X}*_M\mc{B} = \mc{D},
\end{equation}
where $ \mc{A} = \mathbb{R}^{I_1\times\cdots\times I_N \times J_1
\times\cdots\times J_N}, $ $ \mc{X} =
\mathbb{R}^{J_1\times\cdots\times J_N \times K_1 \times\cdots\times
K_M}, $ $ \mc{B} = \mathbb{R}^{K_1\times\cdots\times K_M \times L_1
\times\cdots\times L_M} $ and $ \mc{D} =
\mathbb{R}^{I_1\times\cdots\times I_N \times L_1 \times\cdots\times
L_M}$.

Sun {\it et al.}, \cite{LizBaCY15}  proved the following result for
solving equation (\ref{system41}).

\begin{thm} \label{samethm}{(Theorem 4.1, \cite{LizBaCY15})}\\
Let $\mc{A} \in \mathbb{R}^{I_1\times\cdots\times I_N \times J_1
\times\cdots\times J_N},
    \mc{X} \in \mathbb{R}^{J_1\times\cdots\times J_N \times K_1 \times\cdots\times K_M},
    \mc{B}  \in \mathbb{R}^{K_1\times\cdots\times K_M \times L_1 \times\cdots\times L_M}$ and
   $\mc{D} \in \mathbb{R}^{I_1\times\cdots\times I_N \times L_1 \times\cdots\times L_M}$. Then the tensor equation
$\mc{A}*_N\mc{X}*_M\mc{B} = \mc{D}$;
\begin{itemize}
\item[(a)] is solvable if and only if  there exist $\mc{A}^{(1)}$ and $\mc{B}^{(1)}$
such that $$\mc{A}*_N\mc{A}^{(1)}*_N\mc{D}*_M\mc{B}^{(1)}*_M\mc{B} =
\mc{D},$$
\item[(b)] in this case, the general solution is
\begin{equation}
\mc{X} = \mc{A}^{(1)}*_N\mc{D}*_M\mc{B}^{(1)} + \mc{Z} -
\mc{A}^{(1)}*_N\mc{A}*_N\mc{Z}*_M\mc{B}*_M\mc{B}^{(1)},
\end{equation}
where $\mc{Z}\in \mathbb{R}^{{J_1\times\cdots\times J_N\times
K_1\times\cdots\times K_M}}$ is an arbitrary tensor.
\end{itemize}
\end{thm}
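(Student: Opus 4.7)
The plan is to mimic the standard matrix argument for the equation $AXB = D$, since by the earlier discussion the Einstein product $*_N$ is associative, and $\{1\}$-inverses satisfy exactly the defining identity $\mc{A}*_N\mc{A}^{(1)}*_N\mc{A}=\mc{A}$ used in the matrix proof. Thus no special tensor machinery beyond associativity and the definition of a $\{1\}$-inverse should be needed.

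For part (a), the necessity direction is a direct computation: given any solution $\mc{X}$ and any choice of $\mc{A}^{(1)}\in\mc{A}\{1\}$, $\mc{B}^{(1)}\in\mc{B}\{1\}$, associativity gives
\begin{equation*}
\mc{A}*_N\mc{A}^{(1)}*_N\mc{D}*_M\mc{B}^{(1)}*_M\mc{B}
=(\mc{A}*_N\mc{A}^{(1)}*_N\mc{A})*_N\mc{X}*_M(\mc{B}*_M\mc{B}^{(1)}*_M\mc{B})
=\mc{A}*_N\mc{X}*_M\mc{B}=\mc{D}.
\end{equation*}
Conversely, if this identity holds for some pair of $\{1\}$-inverses, then $\mc{X}_0:=\mc{A}^{(1)}*_N\mc{D}*_M\mc{B}^{(1)}$ is itself a solution, so the equation is solvable.

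For part (b), I would proceed in two steps. First, verify that every tensor of the form
\begin{equation*}
\mc{X} = \mc{A}^{(1)}*_N\mc{D}*_M\mc{B}^{(1)} + \mc{Z} - \mc{A}^{(1)}*_N\mc{A}*_N\mc{Z}*_M\mc{B}*_M\mc{B}^{(1)}
\end{equation*}
is a solution: substituting and using the solvability condition from part (a) plus $\mc{A}*_N\mc{A}^{(1)}*_N\mc{A}=\mc{A}$ and $\mc{B}*_M\mc{B}^{(1)}*_M\mc{B}=\mc{B}$, the $\mc{Z}$ term and the correction term cancel, leaving $\mc{A}*_N\mc{A}^{(1)}*_N\mc{D}*_M\mc{B}^{(1)}*_M\mc{B}=\mc{D}$. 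Second, show every solution has this form by the standard trick: given any solution $\mc{X}_0$, set $\mc{Z}=\mc{X}_0$; then $\mc{A}^{(1)}*_N\mc{A}*_N\mc{X}_0*_M\mc{B}*_M\mc{B}^{(1)} = \mc{A}^{(1)}*_N(\mc{A}*_N\mc{X}_0*_M\mc{B})*_M\mc{B}^{(1)} = \mc{A}^{(1)}*_N\mc{D}*_M\mc{B}^{(1)}$, so the formula collapses to $\mc{X}_0$.

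The one point to handle carefully is the interplay of two different Einstein products $*_N$ and $*_M$ acting on opposite sides of $\mc{X}$ and $\mc{Z}$; I expect the main (though still routine) obstacle to be simply keeping the order of factors and the placement of $*_N$ versus $*_M$ straight in the verification step of part (b). Since the associative law is asserted in the paper for each product and the two products act on disjoint index groups, the regroupings above are legitimate and the argument goes through essentially as in the matrix case.
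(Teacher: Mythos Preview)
Your argument is correct: the direct verification via associativity and the defining property of $\{1\}$-inverses goes through exactly as you outline, and the choice $\mc{Z}=\mc{X}_0$ in part (b) recovers any given solution.

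However, the paper does \emph{not} prove the theorem this way. Since the result is quoted from \cite{LizBaCY15}, the paper's own contribution is an \emph{alternative} proof via the tensor Kronecker product: one rewrites $\mc{A}*_N\mc{X}*_M\mc{B}=\mc{D}$ as a one-sided system $(\mc{A}\otimes\mc{B}^*)*_{(N+M)}\mc{X}=\mc{D}$, invokes the already-established one-sided solvability result (Corollary~\ref{axbcor}), and then uses $(\mc{A}\otimes\mc{B}^*)^{(1)}=\mc{A}^{(1)}\otimes(\mc{B}^*)^{(1)}$ together with Lemma~\ref{kp} to unpack the general solution back into the two-sided form. Your route is the classical Penrose-style direct argument and is more self-contained, needing nothing beyond associativity and the $\{1\}$-inverse axiom; the paper's route trades that simplicity for the structural observation that the two-sided equation is equivalent, via vectorization, to a one-sided one, which is the point it wants to advertise.
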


The next result is obtained as a corollary to the above one.

\begin{cor}
Let $\mc{A} \in \mathbb{R}^{I_1\times\cdots\times I_M \times J_1
\times\cdots\times J_N}$ and
 $\mc{A}^{(1)} \in \mc{A}{\{ 1\}}.$ Then
$$
\mc{A}{\{ 1\}} = \{\mc{A}^{(1)} + \mc{Y} -
\mc{A}^{(1)}*_M\mc{A}*_N\mc{Y}*_M\mc{A}*_N\mc{A}^{(1)}:
 \mc{Y} \in \mathbb{R}^{J_1\times\cdots\times J_N \times I_1 \times\cdots\times I_M}\}.
$$
\end{cor}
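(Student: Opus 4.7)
The plan is to deduce this corollary directly from Theorem \ref{samethm}. A tensor $\mc{X}$ lies in $\mc{A}\{1\}$ precisely when it satisfies the multilinear equation $\mc{A}*_N\mc{X}*_M\mc{A}=\mc{A}$, which is of the form covered by Theorem \ref{samethm} upon setting $\mc{B}:=\mc{A}$ and $\mc{D}:=\mc{A}$. With $\mc{A}^{(1)}$ playing the role of both $\mc{A}^{(1)}$ and $\mc{B}^{(1)}$ in condition (a) of that theorem, solvability reads $\mc{A}*_N\mc{A}^{(1)}*_N\mc{A}*_M\mc{A}^{(1)}*_M\mc{A}=\mc{A}$, and this is immediate from two applications of the defining identity $\mc{A}*_N\mc{A}^{(1)}*_M\mc{A}=\mc{A}$. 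Hence part (b) of Theorem \ref{samethm} applies, and every element of $\mc{A}\{1\}$ has the form
\begin{equation*}
\mc{X}=\mc{A}^{(1)}*_N\mc{A}*_M\mc{A}^{(1)}+\mc{Z}-\mc{A}^{(1)}*_N\mc{A}*_M\mc{Z}*_N\mc{A}*_M\mc{A}^{(1)},\qquad \mc{Z}\ \text{arbitrary}.
\end{equation*}

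It then remains to identify this parametrization with the one stated in the corollary. The plan is to use the affine change of parameter $\mc{Y}=\mc{Z}+\mc{A}^{(1)}-\mc{A}^{(1)}*_N\mc{A}*_M\mc{A}^{(1)}$, which is a bijection on the parameter space. Substituting and then applying $\mc{A}*_N\mc{A}^{(1)}*_M\mc{A}=\mc{A}$ to collapse chains of the form $\mc{A}^{(1)}*_N\mc{A}*_M\mc{A}^{(1)}*_N\mc{A}*_M\mc{A}^{(1)}$ back to $\mc{A}^{(1)}*_N\mc{A}*_M\mc{A}^{(1)}$ produces, after the obvious cancellations, the expression $\mc{A}^{(1)}+\mc{Y}-\mc{A}^{(1)}*_M\mc{A}*_N\mc{Y}*_M\mc{A}*_N\mc{A}^{(1)}$ of the statement.

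An equally short route, if one prefers to bypass Theorem \ref{samethm}, is to verify the two set inclusions directly using only $\mc{A}*\mc{A}^{(1)}*\mc{A}=\mc{A}$. For the $\supseteq$ inclusion, expand $\mc{A}*\mc{X}*\mc{A}$ for an element $\mc{X}$ of the form on the right; the $\mc{Y}$-terms cancel in pairs, leaving $\mc{A}$. For the $\subseteq$ inclusion, given $\mc{X}\in\mc{A}\{1\}$, set $\mc{Y}:=\mc{X}-\mc{A}^{(1)}$; substitution gives $\mc{X}-\mc{A}^{(1)}*\mc{A}*\mc{X}*\mc{A}*\mc{A}^{(1)}+\mc{A}^{(1)}*\mc{A}*\mc{A}^{(1)}*\mc{A}*\mc{A}^{(1)}$, and both of the two rightmost terms reduce to $\mc{A}^{(1)}*\mc{A}*\mc{A}^{(1)}$ (the first using $\mc{A}*\mc{X}*\mc{A}=\mc{A}$, the second using $\mc{A}*\mc{A}^{(1)}*\mc{A}=\mc{A}$), so they cancel and only $\mc{X}$ remains. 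No step here presents a genuine obstacle; the whole argument is algebraic and the only ``content'' is choosing the right reparametrization (respectively, the right $\mc{Y}$) to realize a given $\{1\}$-inverse.
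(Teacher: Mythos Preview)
Your overall approach matches the paper's: both deduce the corollary from Theorem \ref{samethm} applied to $\mc{A}*_N\mc{X}*_M\mc{A}=\mc{A}$ and then reparametrize. However, your specific reparametrization $\mc{Y}=\mc{Z}+\mc{A}^{(1)}-\mc{A}^{(1)}*_N\mc{A}*_M\mc{A}^{(1)}$ does \emph{not} yield the claimed expression after substitution. Carrying it through gives
\[
\mc{X}=2\,\mc{A}^{(1)}*_M\mc{A}*_N\mc{A}^{(1)}-\mc{A}^{(1)}+\mc{Y}-\mc{A}^{(1)}*_M\mc{A}*_N\mc{Y}*_M\mc{A}*_N\mc{A}^{(1)},
\]
which equals the desired $\mc{A}^{(1)}+\mc{Y}-\mc{A}^{(1)}*_M\mc{A}*_N\mc{Y}*_M\mc{A}*_N\mc{A}^{(1)}$ only when $\mc{A}^{(1)}*_M\mc{A}*_N\mc{A}^{(1)}=\mc{A}^{(1)}$, i.e.\ when $\mc{A}^{(1)}$ is already a $\{1,2\}$-inverse. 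The paper instead uses the simpler substitution $\mc{Z}=\mc{A}^{(1)}+\mc{Y}$, under which the $\mc{A}^{(1)}*_M\mc{A}*_N\mc{A}^{(1)}$ terms cancel cleanly. With that correction your first route coincides exactly with the paper's proof.

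Your second, direct route (verifying both inclusions by setting $\mc{Y}:=\mc{X}-\mc{A}^{(1)}$) is correct as written and is a perfectly valid alternative that bypasses Theorem \ref{samethm} altogether.
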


\begin{proof}
By Theorem \ref{samethm}, the general solution of
$\mc{A}*_N\mc{X}*_M\mc{A} = \mc{A}$ is:
$$
\mc{X} = \mc{A}^{(1)}*_M\mc{A}*_N\mc{A}^{(1)} + \mc{Z} -
\mc{A}^{(1)}*_M\mc{A}*_N\mc{Z}*_M\mc{A}*_N\mc{A}^{(1)},
$$ where $\mc{Z}$ is arbitrary.
Substituting $\mc{Z} = \mc{A}^{(1)}+\mc{Y}$, we get
\begin{eqnarray*}
\mc{X} &=& \mc{A}^{(1)}*_M\mc{A}*_N\mc{A}^{(1)} +
\mc{A}^{(1)}+\mc{Y} - \mc{A}^{(1)}*_M\mc{A}*_N\mc{A}^{(1)}*_M\mc{A}*_N\mc{A}^{(1)}\\
&& ~~~~~~~~~~~~~~~~~~~~~~~~~~~~~~~~~~~~~~~~~~~ -\mc{A}^{(1)}*_M\mc{A}*_N\mc{Y}*_M\mc{A}*_N\mc{A}^{(1)}\\
&=& \mc{A}^{(1)}*_M\mc{A}*_N\mc{A}^{(1)} + \mc{A}^{(1)}+\mc{Y}- \mc{A}^{(1)}*_M\mc{A}*_N\mc{A}^{(1)}\\
&& ~~~~~~~~~~~~~~~~~~~~~~~~~~~~~~~~~~~~~~~~~~
-\mc{A}^{(1)}*_M\mc{A}*_N\mc{Y}*_M\mc{A}*_N\mc{A}^{(1)}\\
&=&
\mc{A}^{(1)}+\mc{Y}-\mc{A}^{(1)}*_M\mc{A}*_N\mc{Y}*_M\mc{A}*_N\mc{A}^{(1)}.
\end{eqnarray*}
Hence $$ \mc{A}{\{ 1\}} = \{\mc{A}^{(1)} + \mc{Y} -
\mc{A}^{(1)}*_M\mc{A}*_N\mc{Y}*_M\mc{A}*_N\mc{A}^{(1)}:
 \mc{Y} \in \mathbb{R}^{J_1\times\cdots\times J_N \times I_1 \times\cdots\times I_M}\}.
$$
\end{proof}
The result produced hereunder is a special case of Theorem
\ref{samethm} in the setting of system of linear equations of
tensors.

\begin{cor}\label{axbcor}
Let $\mc{A} \in \mathbb{R}^{I_1\times\cdots\times I_N \times J_1
\times\cdots\times J_N} ~\mbox{and}~~ \mc{B}
 \in \mathbb{R}^{I_1\times\cdots\times I_N}$, then the equation
\begin{equation}
\label{five5} \mc{A}*_N \mc{X} = \mc{B}
\end{equation}
is consistent if and only if  for some $\mc{A}^{(1)}$ such that
\begin{equation}
\label{six6} \mc{A}*_N\mc{A}^{(1)}*_N \mc{B} = \mc{B}
\end{equation}
in which case the general solution of equation \eqref{five5} is:
\begin{equation}
\mc{X} = \mc{A}^{(1)}*_N \mc{B}+(\mc{I}-\mc{A}^{(1)}*_N\mc{A})*_N\mc{Y},
\end{equation}
for any arbitrary $\mc{Y}\in \mathbb{R}^{J_1 \times\cdots\times
J_N}.$
\end{cor}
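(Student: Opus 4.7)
The plan is to derive this corollary as a direct specialization of Theorem \ref{samethm}. In that theorem, take the trailing factor $\mc{B}$ and index set $M$ to be trivial (equivalently, treat $\mc{B}$ as a unit tensor so that $\mc{B}^{(1)}=\mc{I}$); then $\mc{A}*_N\mc{X}*_M\mc{B} = \mc{D}$ collapses to $\mc{A}*_N\mc{X} = \mc{D}$. Identifying $\mc{D}$ there with $\mc{B}$ here, the consistency condition $\mc{A}*_N\mc{A}^{(1)}*_N\mc{D}*_M\mc{B}^{(1)}*_M\mc{B} = \mc{D}$ reduces to \eqref{six6}, and the general solution formula collapses to the stated expression.

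For a self-contained argument, I would proceed as follows. For necessity, suppose $\mc{X}$ solves $\mc{A}*_N\mc{X} = \mc{B}$; premultiplying by $\mc{A}*_N\mc{A}^{(1)}$ and invoking $\mc{A}*_N\mc{A}^{(1)}*_N\mc{A} = \mc{A}$ (condition (1) of Definition \ref{defmpi}) yields $\mc{A}*_N\mc{A}^{(1)}*_N\mc{B} = \mc{B}$. For sufficiency, if \eqref{six6} holds, then $\mc{X}_0 = \mc{A}^{(1)}*_N\mc{B}$ is a particular solution because $\mc{A}*_N\mc{X}_0 = \mc{A}*_N\mc{A}^{(1)}*_N\mc{B} = \mc{B}$.

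To characterize the full solution set, I would first verify that every tensor of the form $\mc{A}^{(1)}*_N\mc{B} + (\mc{I} - \mc{A}^{(1)}*_N\mc{A})*_N\mc{Y}$ satisfies \eqref{five5} by distributing $\mc{A}*_N$ over the sum and using $\mc{A}*_N(\mc{I} - \mc{A}^{(1)}*_N\mc{A}) = \mc{O}$. Conversely, given any solution $\mc{X}$, setting $\mc{Y} = \mc{X}$ in the formula and simplifying via $\mc{A}^{(1)}*_N\mc{A}*_N\mc{X} = \mc{A}^{(1)}*_N\mc{B}$ recovers $\mc{X}$ itself, so the parameterization is exhaustive.

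Since every step reduces to routine algebra that mirrors the classical matrix case (Chapter 1 of \cite{BenGr03}), I anticipate no real obstacle. The only mild care is the dimension bookkeeping in the reduction from Theorem \ref{samethm}, which is purely notational.
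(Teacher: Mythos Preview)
Your proposal is correct and matches the paper's approach exactly: the paper presents this corollary without proof, introducing it simply as ``a special case of Theorem \ref{samethm} in the setting of system of linear equations of tensors,'' which is precisely your reduction (trivializing the right-hand factor). Your additional self-contained verification is more than the paper itself provides and is sound.
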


An alternative proof of Theorem \ref{samethm} is provided below
using Kronecker product of tensors.

\begin{proof}
The tensor equation $ \mc{A} \n \mc{X}*_{M} \mc{B}=\mc{D}$  can be
rewritten as $(\mc{A}\otimes\mc{B}^*)*_{(N+M)} \mc{X}=\mc{D}$, where
$\mc{A}\otimes\mc{B}^*$ is Kronecker product as defined in Section
2. Applying Corollary \ref{axbcor} to
$(\mc{A}\otimes\mc{B}^*)*_{(N+M)} \mc{X}=\mc{D}$, we have the
general solution of the form:
\begin{equation*}
\mc{X} = (\mc{A} \otimes \mc{B}^*)^{(1)} *_{(N+M)} \mc{D} + [\mc{I}- (\mc{A}
 \otimes \mc{B}^*)^{(1)}*_{M} (\mc{A} \otimes \mc{B}^*)]*_{(N+M)}  \mc{Z},
\end{equation*}
 where $\mc{Z}$ is arbitrary. Since we have
$(\mc{A} \otimes \mc{B}^*)^{(1)}= \mc{A}^{(1)} \otimes{(\mc{B}^*)}^{(1)}$. Then, we get
$(\mc{A} \otimes \mc{B}^*)^{(1)} *_{M}(\mc{A} \otimes \mc{B}^*) = (\mc{A}^{(1)}\n \mc{A})
\otimes((\mc{B}^*)^{(1)}*_M\mc{B}^{*}).$
Hence
$$
\mc{X} = (\mc{A} \otimes \mc{B}^*)^{(1)}*_{(N+M)} \mc{D} + [\mc{I}- (\mc{A}^{(1)}*_N \mc{A})
 \otimes ((\mc{B}^*)^{(1)}*_M \mc{B}^*)]*_{(N+M)} \mc{Z}.
$$
Thus, we arrive at the general solution of the form
\begin{equation*}
\mc{X} = \mc{A}^{(1)}*_N\mc{D}*_M\mc{B}^{(1)} + \mc{Z} -
\mc{A}^{(1)}*_N\mc{A}*_N\mc{Z}*_M\mc{B}*_M\mc{B}^{(1)}.
\end{equation*}
\end{proof}

The idea of the above proof is borrowed from the book \cite{Rao71}
where the authors proved for matrices. Next two results are about
solution of two and three tensor equations.

\begin{thm}
The tensor equations $\mc{A}*_N\mc{X} = \mc{B}$ and $\mc{X}*_N\mc{D}
= \mc{F}$ has a common solution if and only if  each equation
separately has a solution and $\mc{A}*_N\mc{F} = \mc{B}*_N\mc{D}.$
\end{thm}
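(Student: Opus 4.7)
The plan is to prove both directions by exploiting associativity of the Einstein product together with the consistency criterion from Corollary \ref{axbcor}.

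For the necessity direction, I would start by assuming a common solution $\mc{X}$ exists. Trivially, each equation individually has a solution (namely $\mc{X}$ itself). The only nontrivial part is the compatibility relation, which follows directly from the associative law of the Einstein product: since $\mc{A}*_N\mc{F} = \mc{A}*_N(\mc{X}*_N\mc{D}) = (\mc{A}*_N\mc{X})*_N\mc{D} = \mc{B}*_N\mc{D}$. This step is one line.

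For the sufficiency direction, I would use Corollary \ref{axbcor} (and its obvious right-hand analogue) to extract the two consistency identities $\mc{A}*_N\mc{A}^{(1)}*_N\mc{B} = \mc{B}$ and $\mc{F}*_N\mc{D}^{(1)}*_N\mc{D} = \mc{F}$ from the assumption that each equation is separately solvable. Then I would exhibit an explicit common solution, mirroring the classical Cecioni construction for matrices:
\begin{equation*}
\mc{X} = \mc{A}^{(1)}*_N\mc{B} + \mc{F}*_N\mc{D}^{(1)} - \mc{A}^{(1)}*_N\mc{A}*_N\mc{F}*_N\mc{D}^{(1)}.
\end{equation*}
To check $\mc{A}*_N\mc{X} = \mc{B}$, the first summand gives $\mc{B}$ by the consistency identity, and the last two summands cancel using $\mc{A}*_N\mc{A}^{(1)}*_N\mc{A} = \mc{A}$. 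To check $\mc{X}*_N\mc{D} = \mc{F}$, I rewrite the first summand using the compatibility hypothesis $\mc{A}*_N\mc{F} = \mc{B}*_N\mc{D}$ (so that $\mc{A}^{(1)}*_N\mc{B}*_N\mc{D} = \mc{A}^{(1)}*_N\mc{A}*_N\mc{F}$); the second summand becomes $\mc{F}$ by the right-sided consistency identity, and the surviving terms cancel.

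The main (mild) obstacle is simply bookkeeping: I need the right-hand analogue of Corollary \ref{axbcor} for the equation $\mc{X}*_N\mc{D} = \mc{F}$, which is not stated explicitly in the excerpt but is immediate by symmetry (apply the conjugate transpose, use $(\mc{A}^{(1)})^* \in \mc{A}^*\{1\}$ as observed in the proof of Theorem 2.17, and transpose back). Once this is in hand, everything else is a direct term-by-term verification using only associativity and the two $\{1\}$-inverse identities, so no substantial difficulty remains.
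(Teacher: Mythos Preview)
Your proposal is correct and follows essentially the same approach as the paper: both exhibit the explicit candidate $\mc{X} = \mc{A}^{(1)}*_N\mc{B} + \mc{F}*_N\mc{D}^{(1)} - \mc{A}^{(1)}*_N\mc{A}*_N\mc{F}*_N\mc{D}^{(1)}$ and verify it works using the consistency identities together with the compatibility hypothesis, while the necessity direction is dismissed as obvious (the paper literally writes ``The other way is obvious''). Your write-up is in fact slightly more careful, since you spell out the one-line associativity argument for necessity and flag the need for the right-hand analogue of Corollary~\ref{axbcor}, which the paper absorbs into its reference to Theorem~\ref{samethm}.
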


\begin{proof}
Let $ \mc{X} = \mc{A}^{(1)}*_N\mc{B} + \mc{F}*_N\mc{D}^{(1)} -
\mc{A}^{(1)}*_N\mc{A}*_N\mc{F}*_N\mc{D}^{(1)}$. Then $\mc{A}\n
\mc{X}=\mc{A}*_N\mc{A}^{(1)}\n \mc{B}$ and $\mc{X}\n \mc{D}=
\mc{A}^{(1)}\n \mc{B} \n \mc{D}+ \mc{F}\n \mc{D}^{(1)} \n \mc{D} -
\mc{A}^{(1)}*_N\mc{A}*_N\mc{F}*_N\mc{D}^{(1)} \n \mc{D}$. So
$\mc{X}$ is a common solution of both equations $\mc{A}*_N\mc{X} =
\mc{B}$ and $\mc{X}*_N\mc{D} = \mc{F}$, provided $\mc{A}*_N\mc{F} =
\mc{B}*_N\mc{D}$, $\mc{A}*_N\mc{A}^{(1)}\n \mc{B} = \mc{B}$ and
$\mc{F}*_N\mc{D}^{(1)} \n \mc{D}
 = \mc{F}.$ However, the last  two tensor equations are equivalent
 to the consistency condition of Theorem \ref{samethm} for the tensor equations
 $\mc{A}*_N\mc{X} = \mc{B}$ and $\mc{X}*_N\mc{D}
= \mc{F}$.  The other way is obvious.
\end{proof}

\begin{thm}
There is at most one tensor $\mc{X}$ satisfying these three
relations
 $$\mc{A}*_N\mc{X} = \mc{B}, ~~\mc{X}*_N\mc{A} = \mc{D}~\mbox{and}~\mc{X}*_N\mc{A}*_N\mc{X} =
 \mc{X}.$$
\end{thm}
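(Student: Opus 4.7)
The plan is to establish uniqueness by a direct short chain of substitutions, exploiting the associativity of the Einstein product together with the hypothesis that each of the three relations is satisfied simultaneously by any candidate solution.

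Suppose $\mc{X}_1$ and $\mc{X}_2$ both satisfy the three relations. Then $\mc{A}*_N\mc{X}_1 = \mc{B} = \mc{A}*_N\mc{X}_2$ and $\mc{X}_1*_N\mc{A} = \mc{D} = \mc{X}_2*_N\mc{A}$, so these two ``cross-products'' are common to both tensors, while the third relation gives $\mc{X}_i = \mc{X}_i*_N\mc{A}*_N\mc{X}_i$ for $i=1,2$. My strategy is to start from $\mc{X}_1 = \mc{X}_1*_N\mc{A}*_N\mc{X}_1$, rewrite $\mc{X}_1*_N\mc{A}$ as $\mc{X}_2*_N\mc{A}$ (both equal $\mc{D}$), regroup by associativity, and then rewrite $\mc{A}*_N\mc{X}_1$ as $\mc{A}*_N\mc{X}_2$ (both equal $\mc{B}$), ending with $\mc{X}_2*_N\mc{A}*_N\mc{X}_2 = \mc{X}_2$. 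In display form, the argument is just
\begin{equation*}
\mc{X}_1 \;=\; \mc{X}_1*_N\mc{A}*_N\mc{X}_1 \;=\; (\mc{X}_2*_N\mc{A})*_N\mc{X}_1 \;=\; \mc{X}_2*_N(\mc{A}*_N\mc{X}_1) \;=\; \mc{X}_2*_N\mc{A}*_N\mc{X}_2 \;=\; \mc{X}_2.
\end{equation*}

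There is essentially no obstacle here: associativity of the Einstein product is already noted in the paper immediately after its definition, and the three hypothesized equalities are used in the most transparent way possible. The only subtlety worth flagging is that consistency (existence) is not claimed, only uniqueness, so no consistency condition on $\mc{A},\mc{B},\mc{D}$ needs to be discussed. The proof will therefore consist of one line of setup (``assume $\mc{X}_1,\mc{X}_2$ both satisfy the three relations'') followed by the displayed four-step identity above.
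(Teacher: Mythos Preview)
Your proof is correct and is essentially the same argument as the paper's: both start from the reflexivity relation $\mc{X}_1=\mc{X}_1*_N\mc{A}*_N\mc{X}_1$, then use $\mc{X}_1*_N\mc{A}=\mc{D}=\mc{X}_2*_N\mc{A}$ and $\mc{A}*_N\mc{X}_1=\mc{B}=\mc{A}*_N\mc{X}_2$ to swap $\mc{X}_1$ for $\mc{X}_2$ one factor at a time, arriving at $\mc{X}_2*_N\mc{A}*_N\mc{X}_2=\mc{X}_2$. The only cosmetic difference is that the paper writes the intermediate steps as $\mc{X}*_N\mc{B}$ and $\mc{D}*_N\mc{Y}$, whereas you keep the $\mc{A}$ visible throughout.
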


\begin{proof}
Suppose that there exists  another  tensor $\mc{Y}$  satisfying
these properties. Then
\begin{eqnarray*}
\mc{X} &=& \mc{X}*_N\mc{A}*_N\mc{X}=\mc{X}*_N\mc{B} = \mc{X}*_N\mc{A}*_N\mc{Y}\\
& =& \mc{D}*_N\mc{Y} = \mc{Y}*_N\mc{A}*_N\mc{Y} = \mc{Y}.
\end{eqnarray*}
\end{proof}

Applying  Theorem \ref{samethm} to $\mc{A}\n \mc{X}= \mc{A}\n
\mc{A}^{(1,3)}$ and putting $\mc{Z}= \mc{Y}+\mc{A}^{(1,3)} $, we
have the following corollary which gives a characterization of class
of $\{ 1,3\}$-inverse of $\mc{A}$.

\begin{cor}
Let $\mc{A} \in \mathbb{C}^{I_1\times\cdots\times I_M \times J_1
\times\cdots\times J_N}$ and $ \mc{A}^{(1,3)}\in  \mc{A}{\{ 1,3\}}.$
Then
\begin{equation*}
\mc{A}{\{ 1,3\}} = \{ \mc{A}^{(1,3)} + (\mc{I}-
\mc{A}^{(1,3)}*_M\mc{A})*_N\mc{Y} :
 \mc{Y} \in \mathbb{C}^{J_1 \times\cdots\times J_N \times I_1\times\cdots\times I_M} \}.
\end{equation*}
\end{cor}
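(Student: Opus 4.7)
The plan is to combine Theorem \ref{13th} with Corollary \ref{axbcor} (the tensor analogue of the standard description of the solution set of a consistent linear tensor equation). By Theorem \ref{13th}, the set $\mc{A}\{1,3\}$ is exactly the solution set of the equation
\begin{equation*}
\mc{A}*_N\mc{X} = \mc{A}*_N\mc{A}^{(1,3)}.
\end{equation*}
So the task reduces to parametrizing this solution set and then cosmetically rewriting the parameter.

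First I would verify consistency of the equation so that Corollary \ref{axbcor} applies. Taking $\mc{A}^{(1,3)}$ itself as the choice of $\{1\}$-inverse required by Corollary \ref{axbcor} (every $\{1,3\}$-inverse is a $\{1\}$-inverse), the consistency condition $\mc{A}*_N\mc{A}^{(1)}*_N(\mc{A}*_N\mc{A}^{(1,3)}) = \mc{A}*_N\mc{A}^{(1,3)}$ collapses immediately to $\mc{A}*_N\mc{A}^{(1,3)}*_N\mc{A}*_N\mc{A}^{(1,3)} = \mc{A}*_N\mc{A}^{(1,3)}$, which holds since $\mc{A}*_N\mc{A}^{(1,3)}*_N\mc{A} = \mc{A}$.

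Next, applying Corollary \ref{axbcor} yields the general solution
\begin{equation*}
\mc{X} = \mc{A}^{(1,3)}*_N\mc{A}*_N\mc{A}^{(1,3)} + (\mc{I} - \mc{A}^{(1,3)}*_M\mc{A})*_N\mc{Z},
\end{equation*}
where $\mc{Z}$ is arbitrary. Following the hint, I would then reparametrize by writing $\mc{Z} = \mc{Y} + \mc{A}^{(1,3)}$ (a bijection on the parameter space, so no solutions are lost or gained). Expanding, the term $(\mc{I} - \mc{A}^{(1,3)}*_M\mc{A})*_N\mc{A}^{(1,3)} = \mc{A}^{(1,3)} - \mc{A}^{(1,3)}*_M\mc{A}*_N\mc{A}^{(1,3)}$ combines with the leading $\mc{A}^{(1,3)}*_N\mc{A}*_N\mc{A}^{(1,3)}$ to leave exactly $\mc{A}^{(1,3)}$, producing the claimed form
\begin{equation*}
\mc{X} = \mc{A}^{(1,3)} + (\mc{I} - \mc{A}^{(1,3)}*_M\mc{A})*_N\mc{Y}.
\end{equation*}

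There is no real obstacle here; the only subtlety is the bookkeeping check that the reparametrization $\mc{Z} \mapsto \mc{Y} + \mc{A}^{(1,3)}$ preserves the full solution set (it does, since it is a translation by a fixed tensor), together with keeping track that $\mc{A}^{(1,3)}$ plays two distinct roles — once as a fixed particular solution used to describe the equation, and once as the chosen $\{1\}$-inverse inside the general solution formula of Corollary \ref{axbcor}. This double use is what makes the two extra terms cancel and produces the compact final description.
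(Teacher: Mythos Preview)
Your proof is correct and follows essentially the same route as the paper: the paper states just before the corollary that it is obtained by applying Theorem~\ref{samethm} to $\mc{A}*_N\mc{X}=\mc{A}*_N\mc{A}^{(1,3)}$ and substituting $\mc{Z}=\mc{Y}+\mc{A}^{(1,3)}$, which is exactly your argument (you use Corollary~\ref{axbcor}, the special case of Theorem~\ref{samethm}, and you make explicit the appeal to Theorem~\ref{13th} that the paper leaves implicit). The only cosmetic point is that a few of your $*_N$'s in the particular-solution term should be $*_M$'s to match the index conventions of the corollary, but this does not affect the argument.
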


Similar consideration of tensor system $\mc{X}*_M \mc{A}=
\mc{A}^{(1,4)}*_M \mc{A}$ leads to the next corollary resulting a
characterization of class of $\{ 1,4\}$-inverse of $\mc{A}$.

\begin{cor}
Let $\mc{A} \in \mathbb{C}^{I_1\times\cdots\times I_M \times J_1
\times\cdots\times J_N}$ and $ \mc{A}^{(1,4)}\in  \mc{A}{\{ 1,4\}}.$
Then
\begin{equation}
\mc{A}{\{ 1,4\}} = \{ \mc{A}^{(1,4)} + \mc{Y}*_M(\mc{I}-
\mc{A}*_N\mc{A}^{(1,4)}) : \mc{Y} \in
\mathbb{C}^{J_1 \times\cdots\times J_N \times I_1\times\cdots\times I_M} \}.
\end{equation}
\end{cor}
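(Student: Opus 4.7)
The plan is to mirror the proof of the preceding $\{1,3\}$-inverse corollary, but with the sides reversed. The key observation, already flagged in the sentence preceding the statement, is that the analogue of Theorem \ref{14th} identifies $\mc{A}\{1,4\}$ with the solution set of the tensor equation $\mc{X}*_M\mc{A} = \mc{A}^{(1,4)}*_M\mc{A}$. So once we produce the general solution of this one-sided equation, we are done.

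To produce that general solution, I will view the equation $\mc{X}*_M\mc{A} = \mc{A}^{(1,4)}*_M\mc{A}$ as a special case of the Sylvester-type equation in Theorem \ref{samethm}, namely by inserting an identity on the left: $\mc{I}*_N\mc{X}*_M\mc{A} = \mc{A}^{(1,4)}*_M\mc{A}$. Consistency is automatic since $\mc{X}=\mc{A}^{(1,4)}$ is itself a solution. Applying the general-solution formula from Theorem \ref{samethm} with the role of ``$\mc{A}$'' played by $\mc{I}$ (whose $\{1\}$-inverse is $\mc{I}$) and the role of ``$\mc{B}$'' played by $\mc{A}$, and choosing $\mc{A}^{(1)}:=\mc{A}^{(1,4)}\in\mc{A}\{1\}$ as the $\{1\}$-inverse of $\mc{A}$, yields
\[
\mc{X} \;=\; \mc{A}^{(1,4)}*_M\mc{A}*_N\mc{A}^{(1,4)} \;+\; \mc{Z} \;-\; \mc{Z}*_M\mc{A}*_N\mc{A}^{(1,4)},
\]
with $\mc{Z}\in\mathbb{C}^{J_1\times\cdots\times J_N\times I_1\times\cdots\times I_M}$ arbitrary.

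The final move is the same reparametrization used in the $\{1,3\}$-inverse corollary: set $\mc{Z}=\mc{Y}+\mc{A}^{(1,4)}$. After expanding, the two copies of $\mc{A}^{(1,4)}*_M\mc{A}*_N\mc{A}^{(1,4)}$ cancel, leaving
\[
\mc{X} \;=\; \mc{A}^{(1,4)} \;+\; \mc{Y} \;-\; \mc{Y}*_M\mc{A}*_N\mc{A}^{(1,4)} \;=\; \mc{A}^{(1,4)} \;+\; \mc{Y}*_M\bigl(\mc{I}-\mc{A}*_N\mc{A}^{(1,4)}\bigr),
\]
which is precisely the parametrization claimed. Running $\mc{Y}$ over all admissible tensors sweeps out all of $\mc{A}\{1,4\}$.

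There is no real obstacle here; the only thing that requires care is bookkeeping the subscripts on $*_M$ and $*_N$, since $\mc{A}$ and $\mc{A}^{(1,4)}$ live in spaces of mixed arity ($M$ versus $N$), so $\mc{A}*_N\mc{A}^{(1,4)}$ and $\mc{A}^{(1,4)}*_M\mc{A}$ use different contraction counts. Apart from that, the proof is a direct transcription of the $\{1,3\}$-inverse argument with left and right multiplication interchanged, and everything follows by the routine identities $\mc{Y}*_M\mc{I}=\mc{Y}$ and $\mc{A}^{(1,4)}*_M\mc{A}*_N\mc{A}^{(1,4)}-\mc{A}^{(1,4)}*_M\mc{A}*_N\mc{A}^{(1,4)}=\mc{O}$.
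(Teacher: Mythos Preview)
Your proposal is correct and follows exactly the route the paper indicates: the paper does not write out a proof but simply says ``Similar consideration of tensor system $\mc{X}*_M \mc{A}= \mc{A}^{(1,4)}*_M \mc{A}$ leads to the next corollary,'' and your argument---applying Theorem~\ref{samethm} to that one-sided equation (with identity on the left) and then substituting $\mc{Z}=\mc{Y}+\mc{A}^{(1,4)}$---is precisely the implied mirror of the $\{1,3\}$ case.
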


We thus conclude this section with an analogous result of Corollary
\ref{axbcor} where $\{ 1\}$-inverse is replaced by the Moore-Penrose
inverse.

\begin{thm}
$\mc{A}*_N \mc{X}=\mc{B}$ has a solution if and only if $\mc{A}*_N
\mc{A}^{\dg} *_N  \mc{B} = \mc{B}$. If a solution exists,
 then every solution is of the form
\begin{equation}
\mc{X}=\mc{A}^\dg *_N  \mc{B} + (\mc{I}- \mc{A}^\dg \n \mc{A})*_N
\mc{W},
\end{equation}
where $\mc{W}$ is  arbitrary.
\end{thm}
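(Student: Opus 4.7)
The plan is to leverage Corollary \ref{axbcor} directly, since the Moore-Penrose inverse $\mc{A}^{\dg}$ is by Definition \ref{defmpi} in particular a $\{1\}$-inverse of $\mc{A}$. So the statement is essentially the specialization of the corollary to the choice $\mc{A}^{(1)} = \mc{A}^{\dg}$, and the only real work is to verify that this choice goes through consistently.

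For the consistency criterion, I would argue both directions separately. The ``if'' direction is immediate: if $\mc{A}*_N\mc{A}^{\dg}*_N\mc{B} = \mc{B}$, then $\mc{X} = \mc{A}^{\dg}*_N\mc{B}$ solves $\mc{A}*_N\mc{X}=\mc{B}$. For the ``only if'' direction, suppose some $\mc{X}_0$ satisfies $\mc{A}*_N\mc{X}_0 = \mc{B}$. Then, using equation (1) of Definition \ref{defmpi},
\begin{equation*}
\mc{A}*_N\mc{A}^{\dg}*_N\mc{B} = \mc{A}*_N\mc{A}^{\dg}*_N\mc{A}*_N\mc{X}_0 = \mc{A}*_N\mc{X}_0 = \mc{B}.
\end{equation*}

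For the general-solution statement, I would verify both that every tensor of the given form is a solution and that every solution can be written in this form. For the first, a direct computation gives
\begin{equation*}
\mc{A}*_N\bigl(\mc{A}^{\dg}*_N\mc{B} + (\mc{I}-\mc{A}^{\dg}*_N\mc{A})*_N\mc{W}\bigr) = \mc{B} + (\mc{A}-\mc{A}*_N\mc{A}^{\dg}*_N\mc{A})*_N\mc{W} = \mc{B},
\end{equation*}
using the consistency condition and equation (1) of Definition \ref{defmpi}. For the second, if $\mc{X}_0$ is any solution, set $\mc{W} = \mc{X}_0$; then
\begin{equation*}
\mc{A}^{\dg}*_N\mc{B} + (\mc{I}-\mc{A}^{\dg}*_N\mc{A})*_N\mc{X}_0 = \mc{A}^{\dg}*_N\mc{A}*_N\mc{X}_0 + \mc{X}_0 - \mc{A}^{\dg}*_N\mc{A}*_N\mc{X}_0 = \mc{X}_0.
\end{equation*}

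There is no real obstacle here; the statement is essentially a restatement of Corollary \ref{axbcor} with the specific $\{1\}$-inverse $\mc{A}^{\dg}$, and the verification is a short application of the Moore-Penrose axioms. The only thing worth being careful about is invoking the correct Moore-Penrose property (equation (1)) at each step rather than the general $\{1\}$-inverse statement, to emphasize that we are distinguishing this refinement from Corollary \ref{axbcor}.
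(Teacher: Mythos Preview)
Your proposal is correct and follows essentially the same approach as the paper: both verify the consistency criterion via equation (1) of Definition \ref{defmpi}, and both use that same axiom to show every solution has the stated form. The only minor differences are that you explicitly check the forward direction (that every tensor of the given form solves the system), which the paper omits, and you take $\mc{W}=\mc{X}_0$ rather than the paper's $\mc{W}=\mc{X}_0-\mc{A}^{\dg}*_N\mc{B}$ to exhibit an arbitrary solution in the required form; both choices work for the same reason.
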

\begin{proof}
First, we  verify the consistency condition. Clearly, if $\mc{A} *_N
\mc{A}^{\dg} *_N   \mc{B} = \mc{B}$ then $\mc{X}_0 = \mc{A}^\dg *_N  \mc{B} $
is a solution. Conversely, suppose that a solution exists. Then
$\mc{A}*_N  \mc{Y}=\mc{B}$ yields
 $\mc{A}*_N \mc{A}^\dg*_N\mc{A}*_N \mc{Y}  =  \mc{A}*_N\mc{A}^\dg *_N  \mc{B}$
which implies $ \mc{B}= \mc{A}*_N  \mc{Y}  =  \mc{A}*_N\mc{A}^\dg
*_N  \mc{B}$.

Now suppose that the system has a solution $\mc{X}_0 = \mc{A}^\dg
*_N  \mc{B}.$ Then, for $\mc{W} = \mc{X}-\mc{X}_0$, we have
$\mc{A}*_N \mc{W}= \mc{A} *_N  \mc{X}- \mc{A} *_N  \mc{X}_0 =
\mc{B}- \mc{A}*_N \mc{A}^\dg *_N  \mc{B} = \mc{B}-\mc{B} = \mc{O}$.
But $\mc{A} *_N \mc{W} = \mc{O}$ implies $ \mc{A}^\dg *_N\mc{A}*_N
\mc{W} = \mc{O} $. Hence $\mc{W} = \mc{W} - \mc{A}^\dg *_N \mc{A}*_N
\mc{W} = (\mc{I}- \mc{A}^\dg *_N \mc{A})*_N  \mc{W}$. Thus
$\mc{X}=\mc{X}_0 + \mc{W} = \mc{A}^\dg *_N  \mc{B} + (\mc{I}-
\mc{A}^\dg *_N \mc{A})*_N \mc{W}$.
\end{proof}

\section{Conclusion}

We have further added some results on generalized inverses  of
tensors via the Einstein product to the existing theory.
 The matrix analogue of  many results presented in this paper are
available in the famous book \cite{BenGr03} and \cite{Rao71}. During
the discussion, we encountered the following issues which have not
been addressed in this paper,
 and are left as open problems for future
studies.
 Looking at Remark 2 of Section 2, one may ask the question given
 next.

 {\bf Question 1.} When does  $(\mc{A}*_N\mc{B})^\dg = \mc{B}^\dg *_N
 \mc{A}^\dg $ ?

The above one can also be stated as reverse order law for
Moore-Penrose inverse of tensors via the Einstein product. The other
problem is noted below.

 {\bf Question
2.} Unlike matrices, does there exist a full rank factorization of
tensors ? If so, can this be used to compute the Moore-Penrose
inverse of a tensor $\mc{A}$ ?

\vspace{1cm}

 \noindent {\small {\bf Acknowledgments.}

 The second author also
acknowledges the support provided by Science and Engineering
Research Board, Department of Science and Technology, New Delhi,
India, under the grant number YSS/2015/000303.}

\bibliographystyle{plain}
\bibliography{my_tensor_refs}
\end{document}